\documentclass{amsart}
\usepackage[a4paper]{geometry}
\frenchspacing
\usepackage{amsthm}
\usepackage{url}
\usepackage{comment}
\makeatletter

\def\part{\@startsection{part}{0}%
 \z@{\linespacing\@plus\linespacing}{.5\linespacing}%
{\normalfont\bfseries\centering}}
\makeatother

\newcommand{\R}{\mathbb{R}}

\pagestyle{plain}

\newtheorem{theorem}{Theorem}[section]
\newtheorem{lemma}[theorem]{Lemma}
\newtheorem{corollary}[theorem]{Corollary}
\newtheorem{proposition}[theorem]{Proposition}

\theoremstyle{remark}
\newtheorem{remark}[theorem]{Remark}

\theoremstyle{definition}

\usepackage{hyperref}
\usepackage{xcolor}
\hypersetup{
 colorlinks = true,
urlcolor= blue,
 linkcolor = blue,
 citecolor= red
}

\usepackage{setspace}
\usepackage{amssymb}

\title{Sharp Estimates for some Integral-Geometric Quantities RELATED TO TRANSVERSALITY, CURVATURE AND VISIBILITY}

\author{Silouanos Brazitikos}

\author{Dimitris-Marios Liakopoulos}

\address{Department of Mathematics \& Applied Mathematics, University of Crete, Voutes Campus, 70013 Heraklion, Greece}

\email{silouanb@uoc.gr, dimliako1@gmail.com }

\thanks{}

\date{\today}

\begin{document}
\begin{abstract}
 We investigate integral-geometric quantities arising from harmonic analysis which measure visibility and transversality. Motivated by their applications in multilinear Kakeya problems and affine-invariant measures on surfaces, we derive exact lower and upper bounds employing geometric and functional inequalities of convex geometry.
\end{abstract}
\maketitle

\section{Introduction}

In the intersection of convex geometry, integral geometry, and harmonic analysis, certain fundamental quantities have recently emerged that capture essential geometric properties of surfaces and vector fields. A central object of study in this paper, as well as in recent literature, is a family of integral-geometric quantities that measure the global transversality of vector fields.

Let $\mathbb{S} = (S, \sigma, v)$ be a generalised $d$-hypersurface, defined as a triple where $(S, \sigma)$ is a $\sigma$-finite measure space and $v: S \rightarrow \mathbb{R}^d$ is a measurable vector field. For a $j$-tuple of such hypersurfaces, $(\mathbb{S}_1, \dots, \mathbb{S}_j)$ with $j \le d$, the quantities $Q_j^p$ for $p > 0$ were defined in \cite{BrazitikosCarberyMacIntyre2023} as:
$$
Q_{j}^{p}(\mathbb{S}_{1},\dots,\mathbb{S}_{j}) := \left( \int_{S_{j}}\cdot\cdot\cdot\int_{S_{1}}|v_{1}(x_{1})\wedge\cdot\cdot\cdot\wedge v_{j}(x_{j})|^{p}d\sigma_{1}(x_{1})...d\sigma_{j}(x_{j}) \right)^{1/jp}
$$
where $|v_1 \wedge \dots \wedge v_j|$ denotes the $j$-dimensional volume of the parallelotope spanned by the vectors $v_1, \dots, v_j$. These quantities effectively provide an $L^p$ average of the joint transversality of the vector fields over the product space $S_1 \times \dots \times S_j$. In the diagonal case, where $\mathbb{S}_1 = \dots = \mathbb{S}_j = \mathbb{S}$, we use the abbreviated notation $Q_j^p(\mathbb{S})$. These quantities have found significant applications, particularly in the context of multilinear Kakeya problems and in the development of affine-invariant measures on surfaces.

A foundational result in the study of these quantities was recently provided in \cite{BrazitikosCarberyMacIntyre2023}. The authors established a general Finner-type inequality which provides an upper bound for $Q_j^p$ in terms of quantities with a smaller number of vector fields. Specifically, if $A_i \subseteq \{1, \dots, j\}$ and $\alpha_i$ are positive numbers  for $1 \leq i \leq m$, such that for all $1 \leq l \leq j$, $\sum_{i=1}^m \alpha_i \chi_{A_i}(l) = 1$, then 
$\{(A_i, \alpha_i)\}_{i=1}^m$ is called a {\em uniform cover} of $\{1, \dots , j\}$. Moreover, if $A \subseteq \{1, \dots , j\}$, let $\Pi_A (\mathbb{S}_1, \dots , \mathbb{S}_j) = (\mathbb{S}_n)_{n \in A}$ be a projection. The authors of \cite{BrazitikosCarberyMacIntyre2023} proved that
\begin{equation}\label{affine_finner}
Q_{j}^{p}(\mathbb{S}_{1},\dots,\mathbb{S}_{j})\le\prod_{i=1}^{m}Q_{k_{i}}^{p}(\Pi_{A_{i}}(\mathbb{S}_{1},\dots,\mathbb{S}_{j}))^{\alpha_{i}k_{i}/j}.
\end{equation}
The proof of this inequality relies on a geometric lemma concerning the volume of parallelotopes, followed by an application of the abstract Finner inequality on the resulting product of functions. While this inequality is sharp in general, its sharpness is typically achieved for systems of mutually orthogonal vectors.

Our primary contribution is to provide a refinement of this inequality, which is sensitive to the angular configuration of the vectors involved. This result, Theorem \ref{thm:rho-sup}, establishes a tighter upper bound by introducing a term that explicitly quantifies the geometric arrangement of the vector fields. We show that:
\begin{equation}\label{angle_sensitive}
Q_{j}^{p}(\mathbb{S}_{1},\dots,\mathbb{S}_{j})\le\left(\prod_{i=1}^{m}Q_{k_{i}}^{p}(\Pi_{A_{i}}(\mathbb{S}_{1},\dots,\mathbb{S}_{j}))^{\alpha_{i}k_{i}}\right)^{1/(jp)}\cdot \sup_{x\in\Pi_{n}S_{n}}\rho(x)^{1/j}.
\end{equation}
The novel factor $\rho(x)$ is defined through the Gram matrix of the unit directions of the vectors:
$$
\rho(x):=\frac{(\det C(x))^{1/2}}{\prod_{i=1}^{m}(\det C_{A,i}(x))^{\alpha,/2}},$$
where $C(x)$ is the Gram matrix of the unit vectors $\{\hat{v}_n(x_n)\}_{n=1}^j$ and $C_{A_i}(x)$ are its principal submatrices. This factor $\rho(x)$ serves as an ``angular deficit" term; it is equal to 1 when the vectors are orthogonal and strictly less than 1 otherwise. 

Our proof operates on a local level first. We begin with the classical Gram identity, which allows us to establish an exact local identity at each point $x = (x_1, \dots, x_j)$:
$$
|v_{1}\wedge\cdot\cdot\cdot\wedge v_{j}|^{p}=(\prod_{i=1}^{m}|\bigwedge_{n\in A_{i}}v_{n}|^{\alpha_{i}p})\rho(x)^{p}
$$
This identity elegantly separates the norms of the projected sub-parallelotopes from the angular dependence captured by $\rho(x)$. The final inequality is then obtained by integrating this local identity and applying the abstract Finner inequality to the product of functions $F_i = |\bigwedge_{n \in A_i} v_n|^p$. This local-to-global approach is the key to preserving the geometric information contained in the angles between the vectors.

The second contribution of our present work is to prove a generalisation of this inequality, when $p=1$, concerning mixed volumes, starting from an inequality proved in \cite[Theorem~1.5]{GBBC}, improving an inequality in \cite[Theorem~1.2]{BrazitikosGiannopoulosLiakopoulos+2018+345+354}. This appears here as Theorem \ref{mixed_vol_theorem}.  

Another central theme in \cite{BrazitikosCarberyMacIntyre2023} is the study of sharp inequalities in the diagonal case, particularly for $p=1, 2$. A key result there, Theorem 1.2, demonstrates that the sequence $Q_j^p(\mathbb{S})$ exhibits a specific monotonicity property, with the unit sphere $\mathbb{S}^{d-1}$ acting as the extremizer:
$$
Q_{j+1}^{p}(\mathbb{S})\le\frac{Q_{j+1}^{p}(\mathbb{S}^{d-1})}{Q_{j}^{p}(\mathbb{S}^{d-1})}Q_{j}^{p}(\mathbb{S}), \quad \text{for } p \in \{1, 2\}.
$$
This result is derived from the deep log-concavity properties of mixed volumes (for $p=1$) and mixed discriminants (for $p=2$), as captured by the Aleksandrov-Fenchel and Aleksandrov inequalities, respectively.

Our work approaches the problem for other values of $p$. We restrict ourselves to the case where we have a general measure on the sphere and we establish a general sharp inequality, showing that for $1 < p < 2$, the uniform measure is the unique maximizer, whereas for $p > 2$, it is not a maximizer. This generalises the work of Tilli, \cite[Theorem~4.1]{Tilli}, where the planar case was considered, only for $p=1$. Moreover, we show that for $p>2$ the maximizer is a discrete measure on the sphere supported on $\{\pm e_1,\dots,\pm e_d\}$.

In the last section, we provide sharp bounds for visibility. In \cite{BrazitikosCarberyMacIntyre2023}  the quantities $Q^p_j(\mathbb{S})$ were related to the notion of visibility which has arisen in harmonic analysis \cite{Guth} in connection with the multilinear Kakeya problem.

\medskip
\noindent
Let $\mathbb{S}= (S, \sigma, v)$ be a generalised $d$-hypersurface, let $1 \leq p < \infty$ and let 
$$ K^p = K^p(\mathbb{S}) := \left\{ y \in \mathbb{R}^d \, : \, \left(\int_S |\langle y, v(x)\rangle|^p {\rm d} \sigma(x)\right)^{1/p} \leq 1\right\}.$$
Then $K^p$ is a closed, balanced and convex subset of $\mathbb{R}^d$ which has nonempty interior, and which under certain mild conditions on $\mathbb{S}$ will be compact. The {\bf $p$-visibility} of $\mathbb{S}$ is defined by
$${\rm vis}^p(\mathbb{S}) := {\rm vol}(K^p(\mathbb{S}))^{-1/d}.$$
Note that this definition differs from some of the literature (\cite{Guth, Zhang, Z-K}), where, in the case $p=1$, ${\rm vis}^{1}(\mathbb{S})$ is taken to be ${\rm vol}(K^{1}(\mathbb{S}))^{-1}$ rather than ${\rm vol}(K^{1}(\mathbb{S}))^{-1/d}$. The following was proved in~\cite{BrazitikosCarberyMacIntyre2023}.

\begin{proposition}\label{prop:visplanes}
Let $k_1 + \dots + k_m = d$. We have
$$ {\rm vis}(\mathbb{S}) \sim_d \inf_{E_j \in \mathcal{G}_{d, k_j}} \left(\frac{\sigma(E_1, \mathbb{S}) \dots \sigma(E_m, \mathbb{S})}{|E_1 \wedge \dots \wedge E_m|}\right)^{1/d},$$
and the infimum is essentially achieved when each $E_j$ is the span of some $k_j$ vectors from the principal directions of the John ellipsoid of $K(\mathbb{S})$. 
\end{proposition}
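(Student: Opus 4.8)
The plan is to reduce the statement to a purely convex–geometric fact about the body $K := K^1(\mathbb{S})$, and then to prove that fact by combining John's theorem with Fischer's inequality for positive definite matrices. The first step is a reformulation: since $\mathrm{vis}(\mathbb{S}) = \mathrm{vol}(K)^{-1/d}$ and, for every linear subspace $E$, the identity $\langle z, P_E v(x)\rangle = \langle z, v(x)\rangle$ for $z\in E$ gives $K^1(\Pi_E\mathbb{S}) = K\cap E$, the quantity $\sigma(E,\mathbb{S})$ unwinds to $\sigma(E,\mathbb{S})\sim_d \mathrm{vol}_{\dim E}(K\cap E)^{-1}$ (up to a constant depending only on $\dim E$). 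Hence the asserted equivalence is the same as
$$\mathrm{vol}(K) \sim_d \sup_{E_j \in \mathcal{G}_{d,k_j}} |E_1 \wedge \cdots \wedge E_m| \prod_{l=1}^{m} \mathrm{vol}_{k_l}(K \cap E_l),$$
so one must bound the right–hand side for arbitrary admissible tuples and then exhibit a near–maximising configuration.

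For the upper bound, valid for every tuple, let $\mathcal{E}$ be the maximal–volume inscribed (John) ellipsoid of $K$; since $K$ is balanced, $\mathcal{E}\subseteq K\subseteq\sqrt{d}\,\mathcal{E}$. From $K\cap E_l\subseteq\sqrt{d}\,(\mathcal{E}\cap E_l)$ it suffices to bound $|E_1\wedge\cdots\wedge E_m|\prod_l\mathrm{vol}_{k_l}(\mathcal{E}\cap E_l)$, at the cost of a dimensional factor $d^{d/2}$. Writing $\mathcal{E} = \{x : x^{\top}Ax\le 1\}$ with $A\succ 0$, fixing orthonormal bases of the $E_l$ and stacking them columnwise into a $d\times d$ matrix $P$ (so that $|E_1\wedge\cdots\wedge E_m| = |\det P|$), the matrix $M = P^{\top}AP$ is positive definite, its $l$–th diagonal block is the restriction $A|_{E_l}$, and $\det M = (\det P)^2\det A$. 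Since $\mathrm{vol}_{k_l}(\mathcal{E}\cap E_l) = \omega_{k_l}(\det A|_{E_l})^{-1/2}$ and $\mathrm{vol}(\mathcal{E}) = \omega_d(\det A)^{-1/2}$, Fischer's inequality $\prod_l\det(A|_{E_l})\ge\det M$ gives at once
$$|E_1\wedge\cdots\wedge E_m|\prod_{l=1}^{m}\mathrm{vol}_{k_l}(\mathcal{E}\cap E_l) \le \frac{\prod_l\omega_{k_l}}{\omega_d}\,\mathrm{vol}(\mathcal{E}),$$
which is the desired inequality up to constants depending only on $d$ and the $k_l$.

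For the matching lower bound, and to identify the extremiser, let $u_1,\dots,u_d$ be the principal directions of $\mathcal{E}$ with semi–axes $\lambda_1,\dots,\lambda_d$, partition $\{1,\dots,d\}$ into blocks $B_1,\dots,B_m$ of sizes $k_1,\dots,k_m$, and set $E_l = \mathrm{span}\{u_i : i\in B_l\}$. These subspaces are mutually orthogonal, so $|E_1\wedge\cdots\wedge E_m| = 1$, while $\mathcal{E}\cap E_l$ is a coordinate sub–ellipsoid of volume $\omega_{k_l}\prod_{i\in B_l}\lambda_i$; using $\mathcal{E}\subseteq K$ and $\mathrm{vol}(K)\le d^{d/2}\omega_d\prod_j\lambda_j$ one gets $|E_1\wedge\cdots\wedge E_m|\prod_l\mathrm{vol}_{k_l}(K\cap E_l)\gtrsim_d\mathrm{vol}(K)$, which is exactly the reverse inequality with the John directions as the near–extremisers.

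The main obstacle is the upper bound of the second paragraph: what is really needed is a dual Loomis--Whitney / Bollob\'as--Thomason–type estimate for \emph{sections} of a convex body that also accounts, through the factor $|E_1\wedge\cdots\wedge E_m|$, for non–orthogonal subspaces; the point of routing everything through the John ellipsoid is precisely that for ellipsoids this estimate collapses to Fischer's inequality. This is also the origin of the dimension–dependent constants, which is why the statement can only be an equivalence up to factors depending on $d$ — and, implicitly, why one assumes the mild conditions on $\mathbb{S}$ that render $K$ compact with non–empty interior, so that the John ellipsoid is non–degenerate and all the volumes involved are finite and positive.
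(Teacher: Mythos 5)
Your proof is correct, and it takes a genuinely different route from the one in \cite{BrazitikosCarberyMacIntyre2023} (which the paper cites; the paper itself does not reprove Proposition~\ref{prop:visplanes} but establishes sharp constants in Section~\ref{section-5}). You reformulate everything in terms of \emph{sections} of $K=K^1(\mathbb S)$ via $\sigma(E,\mathbb S)\sim_{\dim E}\operatorname{vol}_{\dim E}(K\cap E)^{-1}$, and then prove the reformulated inequality by routing through the John ellipsoid and closing the argument with Fischer's determinant inequality for positive definite block matrices. The paper's route (in Theorem~5.4 and the sharper Theorem~5.8) instead works with \emph{projections} of the zonoid $\Pi(\mathbb S)$: for the direction $\mathrm{vis}\lesssim R$ it applies the reverse Santal\'o inequality for zonoids (Gordon--Meyer--Reisner) once globally and then the affine Loomis--Whitney inequality for projections, while for the existence of a near-extremiser it uses the reverse dual Loomis--Whitney inequality, and later the Lewis/Binet position with a continuous Brascamp--Lieb inequality. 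Your approach is more elementary for the crux (Fischer vs.\ Loomis--Whitney-type bounds) and has the virtue of directly identifying the John ellipsoid axes as the near-extremising subspaces, exactly as the proposition claims; the paper's approach is heavier but leverages the zonoid structure to obtain explicit, sharper constants.

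One place where your proposal is under-justified: the step ``$\sigma(E,\mathbb S)$ unwinds to $\sigma(E,\mathbb S)\sim_{\dim E}\operatorname{vol}_{\dim E}(K\cap E)^{-1}$'' does not follow from the identity $K^1(\Pi_E\mathbb S)=K\cap E$ alone. What it does give is $\operatorname{vis}(\Pi_E\mathbb S)=\operatorname{vol}_k(K\cap E)^{-1/k}$ with $k=\dim E$, while $\sigma(E,\mathbb S)=Q^1_k(\Pi_E\mathbb S)^k=\tfrac{k!}{2^k}\,\lvert\Pi(\Pi_E\mathbb S)\rvert$ by the identities \eqref{wedge_product_identity} and \eqref{sigma_proj}. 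To pass from the polar volume to the body volume you still need the $k$-dimensional Santal\'o and reverse Santal\'o (Bourgain--Milman, or Gordon--Meyer--Reisner since the relevant body is a zonoid) inequalities. This is exactly the ``$m=1$'' base case of the proposition in dimension $k$; it is true and not circular, but it should be flagged explicitly, since it means your argument relies on reverse Santal\'o just as the paper's does, only applied in each subspace $E_l$ rather than once globally.
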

Applying this to $m=1$ we obtain the following characterization of the quantity $Q_d^1(\mathbb{S})$ in terms of visibility: 
\begin{corollary}\label{cor:visaswedge}
We have
$$ {\rm vis}(\mathbb{S}) \sim_d \left(\int_S \dots \int_S |v(x_1) \wedge \dots \wedge v(x_d)| {\rm d} \sigma(x_1) \dots  {\rm d} \sigma(x_d)\right)^{1/d} = \; Q_d^1(\mathbb{S}).$$
\end{corollary}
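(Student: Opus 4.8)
\emph{Proof proposal.} The corollary is precisely the $m=1$ instance of Proposition~\ref{prop:visplanes}, so the plan is to specialise that result and read off the two quantities on its right-hand side. Taking $m=1$ forces $k_1=d$, whence the Grassmannian $\mathcal{G}_{d,d}$ is the single point $\mathbb{R}^d$: the infimum is over a one-element set and disappears, and the accompanying remark about the John ellipsoid becomes vacuous (every orthonormal frame of $\mathbb{R}^d$ spans it). It then remains to evaluate $\sigma(E_1,\mathbb{S})\big/|E_1\wedge\dots\wedge E_m|$ at $E_1=\mathbb{R}^d$.

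For the denominator, the $d$-volume of the parallelotope spanned by a basis of all of $\mathbb{R}^d$ equals $1$, so $|E_1\wedge\dots\wedge E_m|=1$. For the numerator I would use the definition of $\sigma(E,\mathbb{S})$ underlying Proposition~\ref{prop:visplanes}: for a $k$-dimensional subspace $E$ it is the integral over $S^{k}$ of the $k$-volume of the parallelotope spanned by the projections $P_Ev(x_1),\dots,P_Ev(x_k)$. When $E=\mathbb{R}^d$ the projection $P_E$ is the identity, so
$$
\sigma(\mathbb{R}^d,\mathbb{S})=\int_S\dots\int_S|v(x_1)\wedge\dots\wedge v(x_d)|\,{\rm d}\sigma(x_1)\dots{\rm d}\sigma(x_d)=Q_d^1(\mathbb{S})^d.
$$
Substituting these two evaluations into Proposition~\ref{prop:visplanes} gives ${\rm vis}(\mathbb{S})\sim_d\big(Q_d^1(\mathbb{S})^d\big)^{1/d}=Q_d^1(\mathbb{S})$, as asserted.

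There is no real obstacle here: all the content is already inside Proposition~\ref{prop:visplanes}, and the remaining care is bookkeeping — checking that its hypotheses genuinely permit $m=1$, $k_1=d$ (they do), and that the dimensional constant hidden in $\sim_d$ is the one occurring there. If a self-contained argument were wanted, one could instead start from ${\rm vis}(\mathbb{S})={\rm vol}(K^1(\mathbb{S}))^{-1/d}$, compare ${\rm vol}(K^1(\mathbb{S}))$ with the volume of its John ellipsoid, and express the latter through the Gram determinant of the ellipsoid's principal semi-axes; but that merely re-runs the proof of the proposition at $m=1$, so invoking the proposition directly is the economical route.
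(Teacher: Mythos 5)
Your proposal is correct and coincides with the paper's (implicit) argument: the paper offers no proof beyond the phrase ``Applying this to $m=1$'', and you have simply spelled out that specialisation — $k_1=d$, $\mathcal{G}_{d,d}=\{\mathbb{R}^d\}$, $|E_1|=1$, and $\sigma(\mathbb{R}^d,\mathbb{S})=Q_d^1(\mathbb{S})^d$ since $P_{\mathbb{R}^d}$ is the identity. The bookkeeping is accurate, including the observation that the infimum collapses and the $1/d$-th root recovers $Q_d^1(\mathbb{S})$.
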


In our work, we establish a sharp version, in terms of the constants that depend on $d$ and $k_i$. One of the ingredients of the proof in \cite{BrazitikosCarberyMacIntyre2023} was an inequality from \cite[Theorem~3.1]{Zhang}. We first observe that this inequality is in fact the reverse Santal\'
{o} or Bourgain Milman inequality, see for example \cite[ Theorem~8.2.2]{GianMilArt}, which provides a lower bound for the product of the volume of a convex body and its polar.  While it is an open problem to find the exact lower bound for general convex bodies, it is known in the class of generalised zonoids, which is the case for $\Pi\mathbb{S}$. Moreover, the fact that the class of extremisers of this inequality are also extremisers for the affine-invariant Loomis--Whitney inequality allows us to obtain a sharp inequality for the upper bound. 

On the other hand, for the lower bound, instead of using the axes of the John ellipsoid, we use the axes of the Binet ellipsoid. Utilizing the isotropicity present there, with the aid of a refinement of the Continuous Brascamp--Lieb inequality, see Lemma \ref{cont_brascamp_lieb}, we are able to obtain a sharp lower bound. We note that the lower bound is also connected with the reverse Loomis--Whitney inequalities from \cite{KoldobskySaroglouZvavitch2019} and \cite{alonso2021reverse}. Here we prove such an inequality for all zonoids and $k$-dimensional  projections, see Theorem \ref{reverse_loomis_zonoids}. 

Finally, we obtain sharp bounds for ${\mathrm {vis}}^p$ for other values of $p$. In Theorem \ref{ell_brascamp_lieb}, a sharp inequality for $p=2$ is obtained, while in Theorem~\ref{thm:vis-vs-I}, we prove that in general $$ {\rm {vis}}^p(\mathbb{S}) \sim_{d,p} \left(\int_S \dots \int_S |v(x_1) \wedge \dots \wedge v(x_d)|^p {\rm d} \sigma(x_1) \dots  {\rm d} \sigma(x_d)\right)^{1/pd} = \; Q_d^p(\mathbb{S}).$$ 
The proof proceeds via Lewis's position. The advantage there is that the transformation preserves volume, so it does not change ${\rm vis^p}$, and, in a sense, places the body in an isotropic position, allowing the use of the continuous Brascamp–Lieb inequality.
 
\section{Proof of Inequality \eqref{angle_sensitive}}\label{section-2}
\noindent In this section, for a fixed $p$, we provide an upper bound for the quantity $Q_j^p(\mathbb{S})$ by the geometric means of the quantities $Q_i^p(\mathbb{S})$ for $i<j$ and a quantity that is sensitive to the angles of the corresponding vectors. This is a refinement of inequality \eqref{affine_finner}.

\begin{theorem}\label{thm:rho-sup}
For each $x=(x_1,\dots,x_j)\in\prod_{n=1}^j S_n$ define the unit vectors
\[
\hat v_n(x_n):=\begin{cases}v_n(x_n)/|v_n(x_n)|,&v_n(x_n)\neq0,\\ \tilde u_n,&v_n(x_n)=0,\end{cases}
\]
where each $\tilde u_n$ is an arbitrary unit vector. Let
\[
C(x):=\big(\langle\hat v_a(x_a),\hat v_b(x_b)\rangle\big)_{a,b=1}^j
\]
be the Gram matrix of the unit directions and for each $A_i$ denote by $C_{A_i}(x)$ the corresponding principal submatrix. Define
\begin{equation}\label{eq:rho-def}
\rho(x):=\dfrac{(\det C(x))^{1/2}}{\displaystyle\prod_{i=1}^m (\det C_{A_i}(x))^{\alpha_i/2}}.
\end{equation}
Then the following inequality holds:
\begin{equation}\label{eq:main-sup}
Q^p_j(\mathbb S_1,\dots,\mathbb S_j) \;\le\; \bigg(\prod_{i=1}^m Q^p_{k_i}(\Pi_{A_i}(\mathbb S_1,\dots,\mathbb S_j))^{\alpha_i k_i}\bigg)^{1/(jp)}\;\cdot\;\sup_{x\in\prod_{n}S_n}\rho(x)^{1/j}.
\end{equation}
In particular,
\[
 Q^p_j(\mathbb S_1,\dots,\mathbb S_j) \le \sup_{x}\rho(x)^{1/j}\;\prod_{i=1}^m Q^p_{k_i}(\Pi_{A_i}(\mathbb S))^{\alpha_i k_i / j},
\]
where
\[
\sup_x \rho(x) \;=\; 
\sup_{(u_1,\dots,u_j)\in \mathcal V_1\times\cdots\times\mathcal V_j}
\dfrac{\bigl(\det(\langle u_a,u_b\rangle)_{a,b}\bigr)^{1/2}}
{\displaystyle\prod_{i=1}^m \bigl(\det(\langle u_a,u_b\rangle)_{a,b\in A_i}\bigr)^{\alpha_i/2}}.\]
\end{theorem}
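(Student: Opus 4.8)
The plan is to reduce the global inequality \eqref{eq:main-sup} to the pointwise Gram identity advertised in the introduction, and then feed the resulting product of functions into the abstract Finner inequality. First I would fix $x=(x_1,\dots,x_j)$ and work purely linear-algebraically. Writing $v_n = |v_n|\,\hat v_n$, one has $|v_1\wedge\dots\wedge v_j| = \big(\prod_{n=1}^j |v_n|\big)\,(\det C(x))^{1/2}$ by the classical Gram determinant formula, and likewise for each index set $A_i$, $|\bigwedge_{n\in A_i} v_n| = \big(\prod_{n\in A_i}|v_n|\big)\,(\det C_{A_i}(x))^{1/2}$. Raising the latter to the power $\alpha_i p$ and multiplying over $i$, the uniform cover condition $\sum_i \alpha_i\chi_{A_i}(l)=1$ makes the scalar factors $\prod_n |v_n|^{\,p\sum_i \alpha_i \chi_{A_i}(n)} = \prod_n |v_n|^p$ match exactly those coming from $|v_1\wedge\dots\wedge v_j|^p$. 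This yields the exact local identity
\[
|v_1\wedge\dots\wedge v_j|^p \;=\; \Big(\prod_{i=1}^m \big|\textstyle\bigwedge_{n\in A_i} v_n\big|^{\alpha_i p}\Big)\,\rho(x)^p,
\]
with $\rho(x)$ as in \eqref{eq:rho-def}; note $\rho$ is well defined and the identity is valid even when some $v_n=0$, since then both sides vanish and the choice of $\tilde u_n$ is immaterial.

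Next I would integrate. Bounding $\rho(x)^p \le \big(\sup_y \rho(y)\big)^p$ pointwise and pulling the supremum out of the multiple integral over $S_1\times\dots\times S_j$ gives
\[
\int_{S_j}\!\!\cdots\!\!\int_{S_1} |v_1\wedge\dots\wedge v_j|^p \, d\sigma_1\cdots d\sigma_j
\;\le\; \big(\sup_x \rho(x)\big)^p \int_{S_j}\!\!\cdots\!\!\int_{S_1} \prod_{i=1}^m F_i(x)^{\alpha_i}\, d\sigma_1\cdots d\sigma_j,
\]
where $F_i(x) := \big|\bigwedge_{n\in A_i} v_n(x_n)\big|^p$ depends only on the coordinates $x_n$, $n\in A_i$, i.e.\ $F_i = F_i\circ \Pi_{A_i}$. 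Since $\{(A_i,\alpha_i)\}$ is a uniform cover, the abstract (Bollob\'as--Thomason / Finner) inequality applies to the product $\prod_i F_i^{\alpha_i}$ and bounds the right-hand integral by $\prod_{i=1}^m \big(\int_{\prod_{n\in A_i}S_n} F_i\big)^{\alpha_i}$. By the very definition of $Q^p_{k_i}$, the $i$-th factor is $\big(Q^p_{k_i}(\Pi_{A_i}(\mathbb S_1,\dots,\mathbb S_j))^{k_i p}\big)^{\alpha_i}$. Assembling these and taking the $1/(jp)$-th root gives exactly \eqref{eq:main-sup}; the ``in particular'' form is just the observation that $\sup_x \rho(x)$ is already the pointwise supremum of a quantity that only depends on the directions, so it equals the displayed supremum over the sets $\mathcal V_n$ of unit directions attainable by $v_n$ (or all of $S^{d-1}$, if one allows arbitrary configurations).

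The one genuine point requiring care — the main obstacle — is the degenerate behaviour of $\rho$: when some principal minor $\det C_{A_i}(x)$ vanishes (linearly dependent directions within a block), the denominator of $\rho$ is $0$. I would handle this by noting that $\det C_{A_i}(x)=0$ forces $|\bigwedge_{n\in A_i}v_n| = 0$, hence $F_i(x)=0$, and since every index $l$ lies in some $A_i$ with $\alpha_i>0$, one checks that $|v_1\wedge\dots\wedge v_j|$ also vanishes; so the local identity should be read as $0=0$ on this null-affecting set and the product $\prod_i F_i^{\alpha_i}$ is genuinely $0$ there. Equivalently, one restricts to the set where all $v_n\ne 0$ and all blocks are non-degenerate, proves the identity there, and observes both sides of \eqref{eq:main-sup} are unaffected by the complement. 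A secondary technical point is measurability of $\hat v_n$ and of $\rho$, which follows from measurability of $v_n$ together with the fixed deterministic choice of $\tilde u_n$; and one should record that $\rho(x)\le 1$ always (so the new factor never worsens \eqref{affine_finner}), which is itself a consequence of the same Finner/Bollob\'as--Thomason inequality applied to the Gram matrix $C(x)$, or of the Hadamard-type inequality $\det C \le \prod_i (\det C_{A_i})^{\alpha_i}$ for uniform covers.
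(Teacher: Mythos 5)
Your proof is correct and follows essentially the same route as the paper's: the Gram determinant identity applied to the full tuple and to each block, cancellation of the scalar factors via the uniform-cover condition to obtain the exact local identity, the pointwise bound $\rho(x)^p \le (\sup_y\rho(y))^p$, and finally the abstract Finner inequality applied to the product $\prod_i F_i^{\alpha_i}$. Your careful treatment of the degenerate configurations (where some $v_n=0$ or some $\det C_{A_i}(x)=0$ so that both sides of the local identity vanish), and the observation that $\rho\le 1$ always so the new factor never worsens \eqref{affine_finner}, are correct clarifications that the paper's proof leaves implicit.
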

\begin{proof}
The classical Gram identity yields, at any point $x$ where all $v_n(x_n)\neq0$,
\begin{equation}\label{eq:wedge-gram}
|v_1(x_1)\wedge\cdots\wedge v_j(x_j)|^2 = \Big(\prod_{n=1}^j |v_n(x_n)|^2\Big)\det C(x).
\end{equation}
Consequently, for any $p>0$,
\begin{equation}\label{eq:wedge-p}
|v_1\wedge\cdots\wedge v_j|^p = \Big(\prod_{n=1}^j |v_n|^p\Big)\big(\det C\big)^{p/2}.
\end{equation}
For any subset $A\subset\{1,\dots,j\}$ the same identity applied to the vectors indexed by $A$ gives
\begin{equation}\label{eq:block-wedge}
\Big|\bigwedge_{n\in A} v_n\Big|^p = \Big(\prod_{n\in A}|v_n|^p\Big)\big(\det C_A\big)^{p/2},
\end{equation}
where $C_A$ denotes the corresponding principal submatrix of $C$.

Using \eqref{eq:block-wedge} for each $A_i$ and multiplying the powers $\alpha_i$, and observing that the condition $\sum_i\alpha_i\chi_{A_i}(n)=1$ causes the product of scalar norms to cancel, we arrive at the exact local identity
\begin{equation}\label{eq:local-identity}
|v_1\wedge\cdots\wedge v_j|^p = \Big(\prod_{i=1}^m \Big|\bigwedge_{n\in A_i} v_n\Big|^{\alpha_i p}\Big)\,\rho(x)^p,
\end{equation}
where $\rho(x)$ is defined by \eqref{eq:rho-def}. The verification is direct: multiplying the right-hand side yields the full product of $|v_n|^p$ and the ratio of determinants which is precisely $\rho(x)^p$.

Define for each $i$ the function
\[
F_i\big((x_n)_{n\in A_i}\big):=\Big|\bigwedge_{n\in A_i} v_n(x_n)\Big|^p.
\]
Integrating identity \eqref{eq:local-identity} over $\prod_n S_n$ with respect to the product measure $d\mu:=\prod_{n=1}^j d\sigma_n(x_n)$ yields
\begin{align*}
Q_j(\mathbb S_1,\dots,\mathbb S_j)^{jp} &= \int_{\prod_n S_n} |v_1\wedge\cdots\wedge v_j|^p\,d\mu(x) \\
&= \int_{\prod_n S_n} \Big(\prod_{i=1}^m F_i((x_n)_{n\in A_i})^{\alpha_i}\Big)\,\rho(x)^p\,d\mu(x).
\end{align*}
By the abstract Finner inequality we have
\[
\int \prod_{i=1}^m F_i^{\alpha_i}\,d\mu \le \prod_{i=1}^m \Big(\int_{\prod_{n\in A_i}S_n} F_i\,\prod_{n\in A_i}d\sigma_n\Big)^{\alpha_i}.
\]
Using that $\rho(x)^p\le (\sup_x\rho(x))^p$ for all $x$ we get
\begin{align*}
Q_j(\mathbb S)^{jp} &= \int \Big(\prod_i F_i^{\alpha_i}\Big)\,\rho^p\,d\mu \\
&\le (\sup_x\rho(x))^p\int \prod_i F_i^{\alpha_i}\,d\mu \\
&\le (\sup_x\rho(x))^p\prod_{i=1}^m \Big(\int_{\prod_{n\in A_i}S_n} F_i\,\prod_{n\in A_i}d\sigma_n\Big)^{\alpha_i}.
\end{align*}
Taking the $(jp)$-th root and recalling that $\int_{\prod_{n\in A_i}S_n} F_i=Q_{k_i}(\Pi_{A_i}(\mathbb S))^{k_i p}$ yields inequality \eqref{eq:main-sup}.

\end{proof}

\begin{remark}
The proof gives immediately a stronger formulation where the factor $\sup_x\rho(x)^{1/j}$ may be replaced by the more precise
\[
\mathcal R:=\Big(\int \Big(\prod_{i=1}^m \Big(\frac{F_i}{\int F_i}\Big)^{\alpha_i}\Big)\rho^p\,d\mu\Big)^{1/(jp)}\le \sup_x\rho(x)^{1/j},
\]
so that one has the refined inequality
\[
Q_j(\mathbb S) \le \Big(\prod_{i=1}^m Q_{k_i}(\Pi_{A_i}(\mathbb S))^{\alpha_i k_i}\Big)^{1/(jp)}\cdot \mathcal R.
\]
The factor $\mathcal R$ depends explicitly on the Gram matrices (hence on the mutual directions of the $v_n$) and quantifies the angular deficit compared with the angle-blind bound.
\end{remark}

\section{Generalised B\'{e}zout Type Inequality}\label{section-3}
\noindent In this section, we give a B\'{e}zout-type inequality concerning mixed volumes of an arbitrary convex body $K\subset\mathbb{R}^d$ and a finite family of zonoids. An immediate consequence is an upper bound for the quantity $Q_j^1(\mathbb{S})$.

We introduce some terminology and notation. For every non-empty
$\tau\subset [d]:=\{1,\dots,d\}$ we set $F_{\tau}=\operatorname{span}\{e_j : j\in\tau\}$ and $E_{\tau}=F_{\tau}^{\perp}$, where $e_1,\dots,e_d$ is the standard basis of $\mathbb{R}^d$. Given $s\ge1$ and $\sigma\subset[d]$ we say that (not necessarily distinct) sets $\sigma_1,\dots,\sigma_r\subseteq\sigma$ form an \emph{$s$-uniform cover} of $\sigma$ if every $j\in\sigma$ belongs to exactly $s$ of the sets $\sigma_i$.

We start with the affine local Loomis--Whitney inequality proved in \cite{GBBC}.
\begin{theorem}[Affine local Loomis--Whitney, \cite{GBBC}]\label{thm:affine-llw}
Let $\{w_1,\dots,w_d\}$ be a basis of $\mathbb{R}^d$. Let~$r\ge1$ and let $(\sigma_1,\dots,\sigma_r)$ be a uniform cover of $\sigma\subseteq[d]$ with weights $(p_1,\dots,p_r)$. Set
$$F_{\sigma}=\operatorname{span}\{w_j:j\in\sigma\},\qquad F_{\sigma_i}=\operatorname{span}\{w_j:j\in\sigma_i\},$$
and denote $d_i:=|\sigma_i|$ and $p:=\sum_{i=1}^rp_i$. For $E_{\sigma}=F_{\sigma}^{\perp}$ and $E_{\sigma_i}=F_{\sigma_i}^{\perp}$ the following holds for every convex body $K\subset\mathbb{R}^d$:
\begin{equation}\label{eq:affine-llw}
|K|^{p-1}|P_{E_{\sigma}}(K)|\le\frac{\prod_{i=1}^r|\wedge_{j\in\sigma_i}w_j|^{p_i}}{|\wedge_{j\in\sigma}w_j|}\frac{\prod_{i=1}^{r}\binom{d-d_i}{d-|\sigma|}^{p_i}}{{\binom{d}{|\sigma|}}^{p-1}}\prod_{i=1}^{r}|P_{E_{\sigma_i}}(K)|^{p_i}.
\end{equation}
\end{theorem}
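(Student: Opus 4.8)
The strategy is to strip \eqref{eq:affine-llw} down to the standard coordinate form of the Bollob\'as--Thomason uniform--cover inequality in two steps: an affine change of variables that turns $\{w_1,\dots,w_d\}$ into the standard basis, and then a slicing argument that integrates out the coordinates outside $\sigma$. For the first step, apply the linear isomorphism $T$ with $Tw_j=e_j$ and set $L=T(K)$. Since $T$ carries the parallelepiped of the $w_j$ onto the unit cube, $|\det T|=|w_1\wedge\cdots\wedge w_d|^{-1}$; and the Jacobian of the map that $T$ induces from $P_{F_\sigma^{\perp}}$--shadows to $P_{(TF_\sigma)^{\perp}}$--shadows equals $|\det T|$ divided by the $|\sigma|$--dimensional volume distortion of $T|_{F_\sigma}$, which is $|\wedge_{j\in\sigma}w_j|^{-1}$ because $T$ sends the parallelepiped of $\{w_j:j\in\sigma\}$ to the unit cube of $\{e_j:j\in\sigma\}$. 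Hence $|K|=|w_1\wedge\cdots\wedge w_d|\,|L|$ and
\[
|P_{E_\sigma}(K)|=\frac{|w_1\wedge\cdots\wedge w_d|}{|\wedge_{j\in\sigma}w_j|}\,|P_{E_\sigma}(L)|,
\]
with $E_\sigma$ now the coordinate subspace $\operatorname{span}\{e_j:j\notin\sigma\}$, and likewise $|P_{E_{\sigma_i}}(K)|=\tfrac{|w_1\wedge\cdots\wedge w_d|}{|\wedge_{j\in\sigma_i}w_j|}\,|P_{E_{\sigma_i}}(L)|$. Substituting into \eqref{eq:affine-llw} and using $\sum_i p_i=p$, every wedge factor and every power of $|\det T|$ cancels, and what remains is the coordinate inequality $|L|^{p-1}|P_{E_\sigma}(L)|\le \tfrac{\prod_i\binom{d-d_i}{d-|\sigma|}^{p_i}}{\binom{d}{|\sigma|}^{p-1}}\prod_i|P_{E_{\sigma_i}}(L)|^{p_i}$, with $E_\sigma=\operatorname{span}\{e_j:j\notin\sigma\}$ and $E_{\sigma_i}=\operatorname{span}\{e_j:j\notin\sigma_i\}$.

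For the coordinate inequality I would slice along $\mathbb R^d=E_\sigma\oplus F_\sigma$. Writing points as $(u,v)$ with $u\in U:=P_{E_\sigma}(L)$ and $L_u=\{v:(u,v)\in L\}$ the section, Fubini gives $|L|=\int_U|L_u|\,du$, $|P_{E_\sigma}(L)|=|U|$, and $|P_{E_{\sigma_i}}(L)|=\int_U g_i(u)\,du$ where $g_i(u):=|P_{F_{\sigma\setminus\sigma_i}}(L_u)|$. The combinatorial point that makes Bollob\'as--Thomason applicable is that $\{\sigma\setminus\sigma_i\}_{i=1}^{r}$ is a uniform cover of $\sigma$ with weights $p_i$ and coverage $\sum_{i:\,j\notin\sigma_i}p_i=p-1$ at every $j\in\sigma$; this is legitimate once $p>1$, while $p=1$ forces all $\sigma_i=\sigma$ and the inequality is then trivial. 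Applying the weighted Bollob\'as--Thomason inequality to each section $L_u\subset\mathbb R^{\sigma}$ therefore yields the pointwise bound $|L_u|^{p-1}\le\prod_{i=1}^{r}g_i(u)^{p_i}$ for all $u\in U$.

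The heart of the argument is to integrate this pointwise bound over $U$ without losing the constant. By Brunn's theorem $u\mapsto|L_u|^{1/|\sigma|}$ is concave on $U$; moreover the graph of $u\mapsto P_{F_{\sigma\setminus\sigma_i}}(L_u)$ is precisely the convex body $P_{E_{\sigma_i}}(L)$, so $\psi_i:=g_i^{1/(|\sigma|-d_i)}$ is also concave on $U$. Rewriting the pointwise bound as $|L_u|\le\prod_i\psi_i(u)^{(|\sigma|-d_i)p_i/(p-1)}$ and applying H\"older's inequality with the exponents $\theta_i=\tfrac{(|\sigma|-d_i)p_i}{|\sigma|(p-1)}$ --- which sum to $1$ because $\sum_i(|\sigma|-d_i)p_i=|\sigma|p-\sum_i d_ip_i=|\sigma|(p-1)$ --- gives $\int_U|L_u|\,du\le\prod_i\bigl(\int_U\psi_i^{|\sigma|}\bigr)^{\theta_i}$. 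Now invoke Berwald's inequality on the $m$--dimensional convex body $U$ (with $m=d-|\sigma|$), comparing the moments of $\psi_i$ of orders $|\sigma|$ and $|\sigma|-d_i$: since $\binom{m+|\sigma|}{m}=\binom{d}{|\sigma|}$ and $\binom{m+|\sigma|-d_i}{m}=\binom{d-d_i}{d-|\sigma|}$, Berwald bounds $\int_U\psi_i^{|\sigma|}$ by exactly these two binomial factors times $|U|^{1-|\sigma|/(|\sigma|-d_i)}\bigl(\int_U g_i\bigr)^{|\sigma|/(|\sigma|-d_i)}$. Substituting and collecting exponents --- which telescope, since $\theta_i\cdot\tfrac{|\sigma|}{|\sigma|-d_i}=\tfrac{p_i}{p-1}$ and $\sum_i d_ip_i=|\sigma|$ --- turns $\bigl(\int_U|L_u|\,du\bigr)^{p-1}|U|$ into precisely $\tfrac{\prod_i\binom{d-d_i}{d-|\sigma|}^{p_i}}{\binom{d}{|\sigma|}^{p-1}}\prod_i\bigl(\int_U g_i\bigr)^{p_i}$, which is the coordinate inequality.

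The main obstacle is not any single sharp estimate but this last round of bookkeeping: one has to choose the H\"older exponents so that Berwald's moment comparison applies to the $\psi_i$, and then verify that the constants produced are exactly $\binom{d}{|\sigma|}$ and $\binom{d-d_i}{d-|\sigma|}$ rather than merely of comparable size, while also checking the degenerate cases ($\sigma_i=\sigma$, $p=1$, lower-dimensional $K$) do not spoil the chain of inequalities. A useful consistency check is that every inequality used --- Bollob\'as--Thomason for boxes, Berwald for simplices, Brunn --- is an equality for a simplex in general position, so one expects that body to realise the stated constant; if the final arithmetic produced anything else, something would be wrong.
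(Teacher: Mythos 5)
Your proof is correct. Note first that the paper itself does not prove this statement---Theorem~\ref{thm:affine-llw} is quoted from \cite{GBBC} without proof---so there is no internal argument in the paper to compare against. Your reconstruction nevertheless follows the route one expects for such affine local Loomis--Whitney inequalities, and I have checked the details.

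The affine reduction is right: with $Tw_j=e_j$ one has $|\det T|=|w_1\wedge\cdots\wedge w_d|^{-1}$, and viewing $T$ in block form with respect to the splittings $F_\sigma\oplus F_\sigma^\perp$ and $F'_\sigma\oplus(F'_\sigma)^\perp$ (where $F'_\sigma=\operatorname{span}\{e_j:j\in\sigma\}$), the induced map on shadows has Jacobian $|\det T|\cdot|\wedge_{j\in\sigma}w_j|$, giving exactly the transformation rule you wrote. After substitution the wedge factors cancel because $\sum_i p_i=p$, leaving the coordinate inequality. The slicing identities are correct: the slice of $P_{E_{\sigma_i}}(L)$ over $u\in U$ is $P_{F_{\sigma\setminus\sigma_i}}(L_u)$ because $\sigma_i\subseteq\sigma$ splits $F_\sigma=F_{\sigma_i}\oplus F_{\sigma\setminus\sigma_i}$. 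The observation that $\{\sigma\setminus\sigma_i\}$ covers $\sigma$ with weights $p_i$ and coverage $p-1$ is the crux, and the resulting pointwise Bollob\'as--Thomason bound $|L_u|^{p-1}\le\prod_i g_i(u)^{p_i}$ is correct. Brunn gives concavity of $\psi_i=g_i^{1/(|\sigma|-d_i)}$ (as $g_i(u)$ is the volume of a slice of the convex body $P_{E_{\sigma_i}}(L)$), H\"older with $\theta_i=\frac{(|\sigma|-d_i)p_i}{|\sigma|(p-1)}$ applies since $\sum_i d_ip_i=|\sigma|$ forces $\sum_i\theta_i=1$, and Berwald with exponents $|\sigma|>|\sigma|-d_i$ on the $m=(d-|\sigma|)$-dimensional base $U$ produces exactly $\binom{d}{|\sigma|}$ and $\binom{d-d_i}{d-|\sigma|}$; the exponents of $\binom{d-d_i}{d-|\sigma|}$, $\binom{d}{|\sigma|}$, $|U|$ and $\int_U g_i$ telescope to $p_i$, $-(p-1)$, $1$ and $p_i$ respectively after raising to the $(p-1)$-st power. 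I also checked that the degenerate cases ($p=1$; $\sigma_i=\sigma$, for which $\theta_i=0$ and $g_i\equiv 1$ on $U$; $\sigma_i=\emptyset$) reduce to equalities or drop out harmlessly, as you anticipated. Your consistency check via the simplex is a good sanity test and does realise the constant.

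One small stylistic caveat: when $\sigma_i=\sigma$ the quantity $\psi_i=g_i^{1/(|\sigma|-d_i)}$ is undefined; it is cleaner to run the H\"older and Berwald steps only over the indices with $d_i<|\sigma|$, noting that the contribution of the remaining indices to both sides of the target inequality is a power of $|U|$, and verifying (as above) that the exponent bookkeeping still closes with $\sum_{i:\,d_i<|\sigma|}\theta_i=1$.
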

We also need the following useful formula for mixed volumes.
\begin{lemma}\label{lem:product-mixed}
Let $E\in G_{d,k}$ and let $L_1,\dots,L_{d-k}$ be compact convex subsets of $E^{\perp}$. If $K_1,\dots,K_k$ are convex bodies in $\mathbb{R}^d$, then
\begin{equation}\label{eq:mix-prod}
\binom{d}{k}V(K_1,\dots,K_k,L_1,\dots,L_{d-k})=V_E(P_E(K_1),\dots,P_E(K_k))\,V_{E^{\perp}}(L_1,\dots,L_{d-k}).
\end{equation}
\end{lemma}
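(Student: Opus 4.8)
The plan is to read off \eqref{eq:mix-prod} from the coefficients of a single monomial in two polynomial expansions of Lebesgue volume, the only geometric ingredient being an elementary Fubini-type slicing identity. Recall the mixed-volume expansion: for compact convex sets $M_1,\dots,M_n$ in a Euclidean space of dimension $n$, the polynomial $(\rho_1,\dots,\rho_n)\mapsto\vol_n(\rho_1M_1+\cdots+\rho_nM_n)$ has its coefficient of the multilinear monomial $\rho_1\cdots\rho_n$ equal to $n!\,V(M_1,\dots,M_n)$, where $V$ is the symmetric mixed volume normalized by $V(M,\dots,M)=\vol_n(M)$. Applying this in dimensions $d$, $k$ and $d-k$ respectively shows that the coefficient of $\lambda_1\cdots\lambda_k\mu_1\cdots\mu_{d-k}$ in $\vol_d\big(\sum_{i=1}^k\lambda_iK_i+\sum_{j=1}^{d-k}\mu_jL_j\big)$ is $d!\,V(K_1,\dots,K_k,L_1,\dots,L_{d-k})$; the coefficient of $\lambda_1\cdots\lambda_k$ in $\vol_E\big(\sum_i\lambda_iP_E(K_i)\big)$ is $k!\,V_E(P_E(K_1),\dots,P_E(K_k))$; and the coefficient of $\mu_1\cdots\mu_{d-k}$ in $\vol_{E^{\perp}}\big(\sum_j\mu_jL_j\big)$ is $(d-k)!\,V_{E^{\perp}}(L_1,\dots,L_{d-k})$. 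Since $\binom{d}{k}=d!/(k!\,(d-k)!)$, the lemma is equivalent to the statement that the first of these three coefficients is the product of the other two, and this is what I would prove.

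The key step is the slicing formula: for a convex body $M\subset\mathbb{R}^d$ and a compact convex set $N\subset E^{\perp}$,
\[
\vol_d(M+N)=\int_{P_E(M)}\vol_{E^{\perp}}(M_y+N)\,dy,\qquad M_y:=\{z\in E^{\perp}:y+z\in M\}.
\]
This follows from Fubini for the orthogonal splitting $\mathbb{R}^d=E\oplus E^{\perp}$, once one checks that $P_E(M+N)=P_E(M)$ (because $P_EN=\{0\}$) and that the fiber $(M+N)\cap(y+E^{\perp})$ equals $y+M_y+N$: a point $m+n$ with $m\in M$, $n\in N$ projects to $y$ iff $P_Em=y$, i.e.\ $m\in y+M_y$. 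I would apply this with $M=\sum_{i=1}^k\lambda_iK_i$ and $N=\sum_{j=1}^{d-k}\mu_jL_j$, taking all $\lambda_i,\mu_j>0$ so that $M$ is full-dimensional and $P_E(M)=\sum_i\lambda_iP_E(K_i)$ is a genuine $k$-dimensional body.

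For each fixed $y$, I would then expand the integrand $\vol_{E^{\perp}}(M_y+\sum_j\mu_jL_j)$ by the mixed-volume expansion inside $E^{\perp}$. This expression is homogeneous of degree $d-k$ jointly in $M_y$ and in $\mu_1,\dots,\mu_{d-k}$, so the monomial $\mu_1\cdots\mu_{d-k}$ already exhausts the full degree $d-k$ and its coefficient contains no factor involving $M_y$ at all: it equals the constant $(d-k)!\,V_{E^{\perp}}(L_1,\dots,L_{d-k})$, independent of $y$ and of $\lambda$. Integrating the finitely many terms of the expansion over $P_E(M)$ and picking out the $\mu_1\cdots\mu_{d-k}$-coefficient — the interchange being trivially valid because this particular coefficient of the integrand is constant in $y$ — yields
\[
\big[\mu_1\cdots\mu_{d-k}\big]\,\vol_d(M+N)=(d-k)!\,V_{E^{\perp}}(L_1,\dots,L_{d-k})\cdot\vol_E\Big(\sum_{i=1}^k\lambda_iP_E(K_i)\Big).
\]
Extracting next the coefficient of $\lambda_1\cdots\lambda_k$ gives $k!\,V_E(P_E(K_1),\dots,P_E(K_k))$ on the right-hand side and, by the degree-$d$ expansion, $d!\,V(K_1,\dots,K_k,L_1,\dots,L_{d-k})$ on the left-hand side; equating and dividing by $k!\,(d-k)!$ produces exactly \eqref{eq:mix-prod}.

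The remaining points are routine. The degenerate cases — some $\lambda_i$ or $\mu_j$ equal to zero, or $\sum_j\mu_jL_j$ lower-dimensional in $E^{\perp}$ (in which case $V_{E^{\perp}}(L_1,\dots,L_{d-k})=0$) — are handled either by regarding \eqref{eq:mix-prod} as a polynomial identity in the support-function data of the bodies, or by continuity of mixed volumes in the Hausdorff metric together with approximation by full-dimensional sets; the measurability needed for Fubini is automatic since every set involved is convex; and the $\binom{d}{k}$ can be double-checked on the product box $K_i=[0,1]^k\times\{0\}^{d-k}$, $L_j=\{0\}^k\times[0,1]^{d-k}$. I expect the only genuinely delicate point to be the passage from the slicing formula to the coefficient identity, where one must make sure that extracting $\big[\mu_1\cdots\mu_{d-k}\big]$ commutes with the integral over $P_E(M)$ despite $P_E(M)$ and the fibers $M_y$ both depending on $\lambda$; the homogeneity observation that $\big[\mu_1\cdots\mu_{d-k}\big]\vol_{E^{\perp}}(M_y+\sum_j\mu_jL_j)$ is the $M_y$-free constant $(d-k)!\,V_{E^{\perp}}(L_1,\dots,L_{d-k})$ is precisely what removes this difficulty, after which everything is bookkeeping.
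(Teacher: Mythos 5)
Your proof is correct. The paper states this lemma without proof --- it is a classical reduction formula for mixed volumes (see e.g.\ Schneider, \emph{Convex Bodies: The Brunn--Minkowski Theory}, \S5.3) --- and the route you take, namely the Fubini slicing identity $\vol_d(M+N)=\int_{P_E(M)}\vol_{E^{\perp}}(M_y+N)\,dy$ for $N\subset E^{\perp}$, followed by expanding both sides as polynomials in the scaling parameters and matching the coefficient of $\lambda_1\cdots\lambda_k\mu_1\cdots\mu_{d-k}$, is precisely the standard proof. The key observation that the $\mu_1\cdots\mu_{d-k}$-coefficient of $\vol_{E^{\perp}}(M_y+\sum_j\mu_jL_j)$ exhausts the full degree $d-k$ and hence is the $y$-independent constant $(d-k)!\,V_{E^{\perp}}(L_1,\dots,L_{d-k})$ is exactly what lets the integral over $P_E(M)$ collapse to $\vol_E(\sum_i\lambda_iP_E(K_i))$, and the subsequent bookkeeping producing $\binom{d}{k}$ is clean. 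One tiny slip: in your sanity check you take $K_i=[0,1]^k\times\{0\}^{d-k}$, which is not a convex body (not full-dimensional); replacing it by, say, $[0,1]^d$ makes the check conform to the hypotheses, though the identity of course still holds for the degenerate choice by continuity.
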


Using the two results above, we prove the following inequality for mixed volumes.
\begin{theorem}\label{mixed_vol_theorem}
Let $r>s\ge1$, let $\sigma\subset [d]$, and let $(\sigma_1,\dots,\sigma_r)$ be an $s$-uniform cover of $\sigma$. For every convex body $K$ and zonoids $Z_j$ ($j\in\sigma$) in $\mathbb{R}^d$ we have
\begin{equation}\label{eq:main-mixed}
|K|^{r-s}V\bigl(K[d-|\sigma|],(Z_j)_{j\in\sigma}\bigr)^s\le \frac{\prod_{i=1}^{r}\binom{d-d_i}{d-|\sigma|}\binom{d}{d_i}}{\binom{d}{|\sigma|}^r}\prod_{i=1}^{r}V\bigl(K[d-|\sigma_i|],(Z_j)_{j\in\sigma_i}\bigr),
\end{equation}
where $d_i:=|\sigma_i|$ for each $i$.
\end{theorem}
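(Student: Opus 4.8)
The plan is to reduce Theorem~\ref{mixed_vol_theorem} to the affine local Loomis--Whitney inequality \eqref{eq:affine-llw} by exploiting the zonoid structure of the $Z_j$ together with the product formula for mixed volumes from Lemma~\ref{lem:product-mixed}. The guiding idea is that a zonoid is (a limit of) a Minkowski sum of segments, and a mixed volume with sufficiently many segment-arguments in fixed directions can be rewritten, via Lemma~\ref{lem:product-mixed}, as a volume of a projection of $K$ times a wedge of those directions. Once everything is expressed through projections $|P_{E_\sigma}(K)|$ and wedges $|\wedge_{j\in\sigma}w_j|$, inequality \eqref{eq:affine-llw} applies verbatim, and integrating over the segment directions according to the generating measures of the zonoids recovers the mixed-volume form \eqref{eq:main-mixed}.

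\smallskip
\noindent Concretely, I would proceed as follows. First, by multilinearity and continuity of mixed volumes and by approximation, it suffices to prove \eqref{eq:main-mixed} in the case where each $Z_j = [-u_j,u_j]$ is a single segment, with $\{u_j : j\in\sigma\}\cup\{$some completion$\}$ playing the role of a basis; the general zonoid case then follows by writing $Z_j = \int [-u,u]\,d\mu_j(u)$ and expanding each mixed volume multilinearly, since every term on both sides is a product/geometric combination of segment-mixed-volumes and the inequality is preserved under the relevant integrations (here one uses that the right-hand side is, after taking the $s$-th root, a product over $i$ of sums of nonnegative terms, handled by H\"older/the superadditivity built into the uniform cover — this is the point to be careful about). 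Second, in the segment case, apply Lemma~\ref{lem:product-mixed} with $E = E_\sigma$, $k = d-|\sigma|$, and $L_j = [-u_j,u_j]\subset E_\sigma^\perp = F_\sigma$ (after noting $P_{E_\sigma}$ kills exactly the $\sigma$-directions, so the segments indeed live in the orthogonal complement once $u_j = w_j$): this gives
\[
\binom{d}{d-|\sigma|}V\bigl(K[d-|\sigma|],(Z_j)_{j\in\sigma}\bigr) = |P_{E_\sigma}(K)|\cdot|\wedge_{j\in\sigma}w_j|,
\]
and similarly for each $\sigma_i$, $\binom{d}{d-d_i}V\bigl(K[d-d_i],(Z_j)_{j\in\sigma_i}\bigr) = |P_{E_{\sigma_i}}(K)|\cdot|\wedge_{j\in\sigma_i}w_j|$. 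Third, substitute these identities into \eqref{eq:affine-llw}: the wedge factors $|\wedge_{j\in\sigma_i}w_j|^{p_i}$ and $|\wedge_{j\in\sigma}w_j|$ appearing in the Loomis--Whitney bound are exactly cancelled/produced by the identities above, the binomial coefficients $\binom{d-d_i}{d-|\sigma|}$, $\binom{d}{|\sigma|}$ from \eqref{eq:affine-llw} combine with the new binomials $\binom{d}{d_i}$, $\binom{d}{d-|\sigma|}$ from Lemma~\ref{lem:product-mixed} to give precisely the constant displayed in \eqref{eq:main-mixed}, and one reads off \eqref{eq:main-mixed} with $p = \sum p_i$, $r-s$ and $s$ matching because an $s$-uniform cover has total weight $p = $ (number of sets counted with multiplicity)$/$(per-element multiplicity); choosing weights $p_i$ appropriately (all equal to $1$, giving $p = r/1$ but per-element count $s$, hence renormalising to $p_i = 1$, $p-1 = r-s$ after accounting for the $s$-fold cover) yields the stated exponents. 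The bookkeeping of exponents — reconciling the "weights $(p_i)$, total $p$" formulation of Theorem~\ref{thm:affine-llw} with the "$s$-uniform cover" formulation of Theorem~\ref{mixed_vol_theorem}, i.e. seeing that $s$-uniform with unit multiplicities forces $p = r$ on the counting side but the homogeneity degree in $K$ is $r-s$ — is the step requiring the most care.

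\smallskip
\noindent The main obstacle I anticipate is the passage from segments to general zonoids while keeping the constant sharp and the inequality intact: expanding $V\bigl(K[d-|\sigma_i|],(Z_j)_{j\in\sigma_i}\bigr) = \int\cdots\int V\bigl(K[d-d_i],([-u,u])_{j\in\sigma_i}\bigr)\,\prod d\mu_j(u)$ turns the right-hand side of \eqref{eq:main-mixed} into a product of integrals, while the left-hand side becomes a single (higher-fold) integral raised to the power $s$; bridging these requires a Finner/Brascamp--Lieb-type integration lemma over the product of generating measures, in the same spirit as the abstract Finner inequality used in Section~\ref{section-2}. One has to verify that the "exponent vector" $(p_i/s)$ over the cover $(\sigma_i)$ of $\sigma$ is itself a valid fractional cover so that Finner's inequality applies to the functions $u\mapsto V\bigl(K[d-d_i],([-u_j,u_j])_{j\in\sigma_i}\bigr)$. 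Granting that, the argument closes. A secondary, purely cosmetic issue is handling degenerate configurations where the $u_j$ fail to be linearly independent (then $|\wedge_{j\in\sigma}w_j| = 0$ and both sides vanish, or a limiting argument is needed), which is dispatched by continuity.
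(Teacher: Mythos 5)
Your plan is the same as the paper's: express the projection volumes $|P_{E_\sigma}(K)|$, $|P_{E_{\sigma_i}}(K)|$ as segment mixed volumes via Lemma~\ref{lem:product-mixed}, feed these into the affine local Loomis--Whitney inequality of Theorem~\ref{thm:affine-llw}, and then upgrade from segments to zonoids. The two places where you flag uncertainty are exactly the two places that need fixing.

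First, the exponent bookkeeping. Your description — ``choosing weights $p_i$ appropriately (all equal to $1$, \dots hence renormalising to $p_i=1$, $p-1=r-s$)'' — is off: with $p_i=1$ an $s$-uniform cover has per-element weight $s$, not $1$, so Theorem~\ref{thm:affine-llw} does not apply with those weights. The correct choice is $p_i=1/s$ for all $i$, giving a genuine uniform cover with $p=\sum p_i = r/s$ and hence $p-1=(r-s)/s$. The $|K|^{r-s}$ on the left of \eqref{eq:main-mixed} then arises only after you substitute \eqref{eq:proj-sigma}, \eqref{eq:proj-sigma-i} into \eqref{eq:affine-llw} and raise the whole inequality to the power $s$. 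Without the factor $1/s$ in the weights the binomial constants on the right do not come out as $\prod_i \binom{d-d_i}{d-|\sigma|}\binom{d}{d_i}/\binom{d}{|\sigma|}^r$ either, so this is not cosmetic.

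Second, the passage from segments to zonoids. You correctly identify that the left-hand side is a single $s$-th power while the right-hand side is a product of $r$ integrals, and that some Finner/H\"older bridge over the generating measures is required. The paper handles this more concretely and one coordinate at a time: write a single zonotope $Z_{j_1}=\sum_\ell S_\ell$, expand $V(K[d-|\sigma|],Z_{j_1},\dots)$ by Minkowski-multilinearity, apply the already-proved segment inequality to each term (still with the weights $1/s$), and then observe that $j_1$ lies in exactly $s$ of the $\sigma_i$, so that summing over $\ell$ and applying H\"older with exponents $(s,\dots,s)$ to precisely those $s$ factors recombines each $\sum_\ell V(K[d-d_i],S_\ell,\dots)^{1/s}$-type term back into $V(K[d-d_i],Z_{j_1},\dots)^{1/s}$. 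Iterating over the remaining indices $j\in\sigma$ and finally using continuity of mixed volumes to pass from zonotopes to zonoids closes the argument. Your proposed ``Finner over the product of generating measures'' would also work — Finner is a generalised H\"older, and the fractional cover condition you want to verify is exactly that the weights $p_i=1/s$ make $(\sigma_1,\dots,\sigma_r)$ a uniform cover of $\sigma$ — but the iterated H\"older is both more elementary and avoids any measurability subtleties about the generating measures. So: same route, but you should fix the weights to $1/s$ and carry the zonotope extension through, rather than invoking it at arm's length.
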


\begin{proof}
Let $\{w_i\}_{i=1}^d$ be a (not necessarily orthonormal) basis of $\mathbb{R}^d$. With the notation of Theorem~\ref{thm:affine-llw}, apply Lemma~\ref{lem:product-mixed} with $k=d-|\sigma|$, $K_1=\cdots=K_{d-|\sigma|}=K$, and $L_j=[0,w_j]$ for $j\in\sigma$. This yields
\begin{equation}\label{eq:proj-sigma}
|P_{E_{\sigma}}(K)|=\frac{1}{|\wedge_{j\in\sigma}w_j|}\binom{d}{|\sigma|}V\bigl(K[d-|\sigma|],([0,w_j])_{j\in\sigma}\bigr).
\end{equation}
Similarly, for each $i$,
\begin{equation}\label{eq:proj-sigma-i}
|P_{E_{\sigma_i}}(K)|=\frac{1}{|\wedge_{j\in\sigma_i}w_j|}\binom{d}{d_i}V\bigl(K[d-|\sigma_i|],([0,w_j])_{j\in\sigma_i}\bigr).
\end{equation}
Applying Theorem~\ref{thm:affine-llw} with $p_i=1/s$ for all $i$ (so that $p=r/s$) gives
\begin{equation}\label{eq:apply-llw}
|K|^{\frac{r}{s}-1}|P_{E_{\sigma}}(K)|\le \frac{\prod_{i=1}^r|\wedge_{j\in\sigma_i}w_j|^{\frac{1}{s}}}{|\wedge_{j\in\sigma}w_j|}\frac{\prod_{i=1}^{r}\binom{d-d_i}{d-|\sigma|}^{\frac{1}{s}}}{{\binom{d}{|\sigma|}}^{\frac{r}{s}-1}}\prod_{i=1}^{r}|P_{E_{\sigma_i}}(K)|^{\frac{1}{s}}.
\end{equation}

Substitute \eqref{eq:proj-sigma} and \eqref{eq:proj-sigma-i} into \eqref{eq:apply-llw}, then raise both sides to the power $s$. After simplification one obtains
\begin{equation}\label{eq:segment-bound}
|K|^{r-s}V\bigl(K[d-|\sigma|],([0,w_j])_{j\in\sigma}\bigr)^s
\le\frac{\prod_{i=1}^{r}\binom{d-d_i}{d-|\sigma|}\binom{d}{d_i}}{\binom{d}{|\sigma|}^r}\prod_{i=1}^{r}V\bigl(K[d-|\sigma_i|],([0,w_j])_{j\in\sigma_i}\bigr).
\end{equation}

Mixed volumes are invariant under translations thus, if $L_j=[x_j,y_j]$ are arbitrary line segments and we set $z_j=y_j-x_j$, then
$$V\bigl(K[d-|\sigma|],(L_j)_{j\in\sigma}\bigr)=V\bigl(K[d-|\sigma|],([0,z_j])_{j\in\sigma}\bigr).$$
Hence \eqref{eq:segment-bound} holds for any family of line segments whose directions are pairwise linearly independent, so we have
\begin{equation}\label{eq:line-bound}
    |K|^{r-s}V\bigl(K[d-|\sigma|],(L_j)_{j\in\sigma}\bigr)^s
\le \frac{\prod_{i=1}^{r}\binom{d-d_i}{d-|\sigma|}\binom{d}{d_i}}{\binom{d}{|\sigma|}^r}\prod_{i=1}^{r}V\bigl(K[d-|\sigma_i|],(L_j)_{j\in\sigma_i}\bigr)
\end{equation}

Let $Z_{j_1}=\sum_{\ell=1}^{m}S_{\ell}$ be a zonotope expressed as a Minkowski sum of line segments $S_1,\dots,S_m$, and fix $j_1\in\sigma$. Since $j_1$ belongs exactly to $s$ of the sets $\sigma_i$, applying \eqref{eq:line-bound} we have 

\begin{align*}
&|K|^{\frac{r-s}{s}}V\big(K[d-|\sigma|],Z_{j_1},(L_j)_{j\in\sigma\setminus\{j_1\}}\big)= \sum_{l=1}^{m}|K|^{\frac{r-s}{s}}V\big(K[d-|\sigma|],S_l,(L_j)_{j\in\sigma\setminus\{j_1\}}\big)\\
&\le \sum_{l=1}^{m}c^{\frac{1}{s}}\prod_{\substack{i\\j_1\notin\sigma_i}}V\big(K[d-|\sigma_i|],(L_j)_{j\in\sigma_i}\big)^{\frac{1}{s}}\prod_{\substack{i\\j_1\in\sigma_i}}V\big(K[d-|\sigma_i|],S_l,(L_j)_{j\in\sigma\setminus\{j_1\}}\big)^{\frac{1}{s}}\\
&= c^{\frac{1}{s}}\prod_{\substack{i\\j_1\notin\sigma_i}}V\big(K[d-|\sigma_i|],(L_j)_{j\in\sigma_i}\big)^{\frac{1}{s}}\Bigg[\sum_{l=1}^{m}\prod_{\substack{i\\j_1\in\sigma_i}}V\big(K[d-|\sigma_i|],S_l,(L_j)_{j\in\sigma\setminus\{j_1\}}\big)^{\frac{1}{s}}\Bigg]\\
&\le c^{\frac{1}{s}}\prod_{\substack{i\\j_1\notin\sigma_i}}V\big(K[d-|\sigma_i|],(L_j)_{j\in\sigma_i}\big)^{\frac{1}{s}}\prod_{\substack{i\\j_1\in\sigma_i}}\bigg[\sum_{l=1}^{m}V\big(K[d-|\sigma_i|],S_l,(L_j)_{j\in\sigma\setminus\{j_1\}}\big)\bigg]^{\frac{1}{s}}\\
&= c^{\frac{1}{s}}\prod_{\substack{i\\j_1\notin\sigma_i}}V\big(K[d-|\sigma_i|],(L_j)_{j\in\sigma_i}\big)^{\frac{1}{s}}\prod_{\substack{i\\j_1\in\sigma_i}}V\big(K[d-|\sigma_i|],Z_{j_i},(L_j)_{j\in\sigma\setminus\{j_1\}}\big)^{\frac{1}{s}},
\end{align*}
where we also used H\"{o}lder's inequality.
Repeating this argument for the other coordinates will give us \eqref{eq:main-mixed} for arbitrary zonotopes $Z_j$.
Using the continuity of mixed volumes, together with the fact that every zonoid is a limit of zonotopes, the result follows.
\end{proof}
\begin{corollary}
    Let $\mathbb{S}_1,\dots,\mathbb{S}_j$ be generalised $d$-hypersurfaces with $1\le j\le d-1$. Suppose $(\sigma_1,\dots,\sigma_r)$ a $s$-uniform cover of $\sigma=\{1,\dots,j\}$ and that $|\sigma_i|=d_i$.Then
    \begin{equation*}
        Q_j^1(\mathbb{S}_1,\dots,\mathbb{S}_j)\le q\prod_{i=1}^rQ_{d_i}^1\big((\mathbb{S}_j)_{j\in\sigma_i}\big)^{\frac{d_i}{sj}},
    \end{equation*}
where
    $$q=\frac{1}{\omega_d^{r/sj}}\bigg(\frac{d!\omega_d}{(d-j)!\omega_{d-j}}\bigg)^{1/j}\Bigg(\prod_{i=1}^r\frac{\binom{j}{d_i}\omega_{d-d_j}}{\binom{d}{d_i}d_i!}\Bigg)^{1/sj}.$$
\end{corollary}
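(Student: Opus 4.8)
The plan is to derive this corollary as a direct application of Theorem~\ref{mixed_vol_theorem} to the zonoids attached to the generalised hypersurfaces, combined with Corollary~\ref{cor:visaswedge} (and its underlying identity) to rewrite $Q$-quantities as mixed volumes. First I would recall that for a generalised $d$-hypersurface $\mathbb{S}=(S,\sigma,v)$, the body $\Pi\mathbb{S}$ obtained by integrating the segments $[-v(x)/2,v(x)/2]$ against $d\sigma(x)$ is a zonoid, and that $Q_k^1$ of a $k$-tuple of hypersurfaces can be identified, up to an explicit dimensional constant involving factorials and volumes $\omega_d$ of Euclidean balls, with a mixed volume of the form $V(K[d-k],Z_1,\dots,Z_k)$ where the $Z_n$ are the associated zonoids and $K$ is taken to be the Euclidean ball $B_2^d$. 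Concretely, I would establish the formula
\[
\int_{S_1}\cdots\int_{S_k}|v_1(x_1)\wedge\cdots\wedge v_k(x_k)|\,d\sigma_1\cdots d\sigma_k \;=\; \binom{d}{k}^{-1}\,\frac{d!\,\omega_d}{(d-k)!\,\omega_{d-k}}\;\cdots
\]
wait — more carefully, one uses the standard fact that $|v_1\wedge\cdots\wedge v_k| = \binom{d}{k}\,V_{E}(P_E[0,v_1],\dots)\cdots$; the cleanest route is via Lemma~\ref{lem:product-mixed} exactly as in the proof of Theorem~\ref{mixed_vol_theorem}, integrating the multilinear mixed-volume identity over the product measure. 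This gives $Q_k^1((\mathbb{S}_n)_{n\in\sigma_i})^{d_i} = c_{d,d_i}\,V(B_2^d[d-d_i],(Z_n)_{n\in\sigma_i})$ for an explicit $c_{d,d_i}$.

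Next I would apply Theorem~\ref{mixed_vol_theorem} with $K=B_2^d$, $\sigma=\{1,\dots,j\}$, and the given $s$-uniform cover $(\sigma_1,\dots,\sigma_r)$, and with the zonoids $Z_n$ being those attached to $\mathbb{S}_n$. Since $|B_2^d|=\omega_d$, the inequality \eqref{eq:main-mixed} reads
\[
\omega_d^{\,r-s}\,V\bigl(B_2^d[d-j],(Z_n)_{n\in\sigma}\bigr)^s \;\le\; \frac{\prod_{i=1}^r \binom{d-d_i}{d-j}\binom{d}{d_i}}{\binom{d}{j}^r}\;\prod_{i=1}^r V\bigl(B_2^d[d-d_i],(Z_n)_{n\in\sigma_i}\bigr).
\]
Then I substitute the mixed-volume-to-$Q$ identities on both sides: replace $V(B_2^d[d-j],(Z_n)_{n\in\sigma})^s$ by $(c_{d,j}^{-1})^s\,(Q_j^1)^{js}$ and each right-hand factor $V(B_2^d[d-d_i],(Z_n)_{n\in\sigma_i})$ by $c_{d,d_i}^{-1}\,(Q_{d_i}^1((\mathbb{S}_n)_{n\in\sigma_i}))^{d_i}$. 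Taking the $(js)$-th root isolates $Q_j^1(\mathbb{S}_1,\dots,\mathbb{S}_j)$ on the left with the product $\prod_i Q_{d_i}^1((\mathbb{S}_n)_{n\in\sigma_i})^{d_i/(sj)}$ on the right, and all the binomial, factorial, and $\omega$-constants collect into a single constant $q$.

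The main obstacle — really the only nontrivial point — will be the bookkeeping of the constant $q$: one must carefully combine the dimensional constant $c_{d,k}$ relating $Q_k^1$ to the mixed volume with the combinatorial factor $\prod_i \binom{d-d_i}{d-j}\binom{d}{d_i}/\binom{d}{j}^r$ from Theorem~\ref{mixed_vol_theorem}, and check that after taking the $(js)$-th root everything simplifies to the stated
\[
q=\frac{1}{\omega_d^{r/sj}}\bigg(\frac{d!\,\omega_d}{(d-j)!\,\omega_{d-j}}\bigg)^{1/j}\Bigg(\prod_{i=1}^r\frac{\binom{j}{d_i}\,\omega_{d-d_i}}{\binom{d}{d_i}\,d_i!}\Bigg)^{1/sj}.
\]
In particular the exponent of $\omega_d$ must come out to $-r/(sj)$: the $\omega_d^{r-s}$ on the left of \eqref{eq:main-mixed} contributes, the factor $c_{d,j}$ contributes an $\omega_d$ (via the $B_2^d$ projection volume), and each $c_{d,d_i}$ on the right contributes one as well, so one should verify $-(r-s)/(js) + (\text{left } \omega_d \text{ power})/(js) - (\text{right } \omega_d \text{ powers})/(js) = -r/(sj)$. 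I would also remark that the hypothesis $1\le j\le d-1$ guarantees all the projections are onto proper subspaces so that Lemma~\ref{lem:product-mixed} and Theorem~\ref{thm:affine-llw} apply, and that $r>s$ (from Theorem~\ref{mixed_vol_theorem}) is exactly what makes the power of $|K|=\omega_d$ on the left positive. Since the right-hand side of Theorem~\ref{mixed_vol_theorem} is stated for $K$ a general convex body and zonoids $Z_j$, no approximation step is needed here — the limiting argument was already absorbed into the proof of that theorem.
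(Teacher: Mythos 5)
Your proposal is correct and follows exactly the route the paper takes (its own proof of this corollary is the one-liner: apply Theorem~\ref{mixed_vol_theorem} with $\sigma=\{1,\dots,j\}$, $K=B_2^d$, and $Z_i=\Pi(\mathbb{S}_i)$). You correctly identify the needed bridge identity, namely that Lemma~\ref{lem:product-mixed} applied to segments gives $V(B_2^d[d-k],(\Pi(\mathbb{S}_n))_{n})=\frac{2^{k}(d-k)!\,\omega_{d-k}}{d!}\,Q_{k}^{1}(\cdots)^{k}$, and the constant-bookkeeping you describe does indeed close (using $\sum_i d_i = sj$ to cancel the powers of $2$, and the combinatorial identity $\frac{1}{\binom{d}{j}}\binom{d-d_i}{d-j}\binom{d}{d_i}\frac{(d-d_i)!}{d!}=\binom{j}{d_i}\frac{(d-d_i)!}{d!}$ for the rest), yielding the stated $q$ — which also confirms that $\omega_{d-d_j}$ in the statement should read $\omega_{d-d_i}$.
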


\begin{proof}
    We apply Theorem 3.2 for $\sigma=\{1,\dots,j\}$, $K=B_2^d$ and $Z_i=\Pi(\mathbb{S}_i)$ and we have the result.
\end{proof}

\section{Bounds In The Diagonal Case}\label{section-4}
\noindent In the diagonal case—that is, when all hypersurfaces coincide—the problem is to establish a sharp comparison between
$Q_{j+1}^{p}(\mathbb{S})$ and  $Q_{j}^{p}(\mathbb{S})$. For $p=1,2,\infty$, this was completely resolved in \cite{BrazitikosCarberyMacIntyre2023}, where it was shown that the ratio is maximized by the uniform measure on the sphere.

In this section, we prove that for $0<p<2$ the uniform measure on the sphere also maximizes the ratio
$\dfrac{Q_{2}^{p}(\mathbb{S})}{Q_{1}^{p}(\mathbb{S})}$
among all probability measures on the sphere. 

Moreover, we prove that the maximizer changes when $p>2$ and we prove that, in fact, it is a discrete measure. 

\noindent Let $\mu$ be a probability measure on the sphere $S^{d-1}$. Then, since $Q_{1}^{p}(\mathbb{S})=1$, it suffices to give a sharp upper bound 
for 
\begin{equation}
    \int_{S^{d-1}}\int_{S^{d-1}}|x\wedge y|^pd\mu(x)d\mu(y)=\int_{S^{d-1}}\int_{S^{d-1}}(1-\langle x,y\rangle^2)^{\frac{p}{2}}d\mu(x)d\mu(y).
\end{equation}
The following theorem asserts that this functional is indeed maximized for the uniform measure on the sphere for $0<p<2$, while for $p>2$ the uniform measure is not a maximizer. 
\begin{theorem}
  Let $S^{d-1}\subset\mathbb{R}^d$ be the unit sphere and let $\mu$ be a probability measure on $S^{d-1}$. For $p>0$ define
\[
I_p(\mu):=\iint_{S^{d-1}\times S^{d-1}}(1-\langle x,y\rangle^2)^{p/2}\,d\mu(x)\,d\mu(y).
\]
Then:
\begin{enumerate}
\item For $0<p<2$ the uniform surface measure $\sigma$ on $S^{d-1}$ maximizes $I_p(\mu)$ among probability measures $\mu$.
\item For every $p>2$ the uniform measure is not a maximizer; in fact a discrete measure supported on the four points $\{\pm e_1,\pm e_2\}\subset S^{1}$ gives a strictly larger value.
\end{enumerate}
\end{theorem}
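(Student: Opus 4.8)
The plan is to handle the two parts by rather different mechanisms. For part (1), the key is to recognise that the map $(x,y)\mapsto (1-\langle x,y\rangle^2)^{p/2}$ is, for $0<p<2$, a \emph{positive-definite} kernel on $S^{d-1}$ (up to an additive constant), so that $I_p(\mu)$ is a concave functional of $\mu$ whose unique maximiser among probability measures is forced by symmetry to be the $O(d)$-invariant measure $\sigma$. Concretely, I would expand $(1-t^2)^{p/2}=\sum_{k\ge0} c_k(p)\,t^{2k}$ and track the signs of the coefficients: since $0<p/2<1$, one has $c_0=1>0$ while $c_k(p)\le 0$ for all $k\ge1$ (the binomial-type coefficients of $(1-u)^{p/2}$ with $u=t^2$ alternate, and the even powers of $t$ pick out exactly the non-positive ones after $k=0$). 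Writing $\langle x,y\rangle^{2k}$ in terms of Gegenbauer polynomials — or more simply noting that $t\mapsto t^{2k}$ is a non-negative combination of the positive-definite zonal kernels $\{t^{2\ell}\}_{\ell\le k}$, each of which is positive-definite on the sphere since $\langle x,y\rangle^{2\ell}=\langle x^{\otimes 2\ell},y^{\otimes 2\ell}\rangle$ — shows that $-I_p(\mu)+1$ is a sum of squares of linear functionals of $\mu$, hence $I_p(\mu)\le 1 - (\text{something}\ge 0)$ with equality iff all the relevant moments of $\mu$ vanish. By rotational averaging, $\sigma$ annihilates exactly those moments, giving $I_p(\sigma)=\max_\mu I_p(\mu)$; a short argument identifying the zero set of the quadratic form yields uniqueness.

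For part (2), I would argue by a direct comparison: compute $I_p(\sigma)$ in closed form and exhibit a four-point measure beating it when $p>2$. For the uniform measure, $I_p(\sigma)=\E\,(1-\langle x,y\rangle^2)^{p/2}=\E\,(\sin\theta)^p$ where $\cos\theta=\langle x,y\rangle$ has the standard Beta-type density on $[-1,1]$; this gives an explicit ratio of Gamma functions, and in particular on $S^1$ one gets $I_p(\sigma)=\frac1{2\pi}\int_0^{2\pi}|\sin\varphi|^p\,d\varphi=\frac{\Gamma(\frac{p+1}{2})}{\sqrt\pi\,\Gamma(\frac p2+1)}$. For the discrete competitor $\nu=\tfrac14(\delta_{e_1}+\delta_{-e_1}+\delta_{e_2}+\delta_{-e_2})$ on $S^1$, the pairwise values $(1-\langle x,y\rangle^2)^{p/2}$ are $0$ when the two points are equal or antipodal and $1$ when they are orthogonal, so $I_p(\nu)=\tfrac12$ for \emph{every} $p$. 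It then suffices to check $\frac{\Gamma(\frac{p+1}{2})}{\sqrt\pi\,\Gamma(\frac p2+1)}<\tfrac12$ for $p>2$ — equivalently $g(p):=\log\Gamma(\frac{p+1}{2})-\log\Gamma(\frac p2+1)-\log\frac{\sqrt\pi}{2}<0$ — which holds since $g(2)=0$ (both sides equal $\tfrac12$ at $p=2$, matching the known $p=2$ extremality) and $g'(p)=\tfrac12\bigl(\psi(\tfrac{p+1}2)-\psi(\tfrac p2+1)\bigr)<0$ because $\psi$ is increasing and $\tfrac{p+1}2<\tfrac p2+1$. For general $d$ one embeds $S^1\subset S^{d-1}$ and uses that $\nu$ is supported there with the same pairwise structure, while $I_p(\sigma_{d-1})$ is a decreasing function of $p$ that again equals the $d$-dependent constant at $p=2$ and drops below it — the $p=2$ value being exactly $\E(\sin^2\theta)=1-\tfrac1d<\ldots$; one checks the analogous monogamy of Gamma ratios, or simply invokes that the four-point configuration is \emph{dimension-free} while $I_p(\sigma_{d-1})\le I_p(\sigma_1)$ by a projection/conditioning argument is false in general, so the honest route is the Gamma computation in each dimension.

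The main obstacle is part (1): establishing rigorously that $(1-t^2)^{p/2}$ minus its constant term is negative-definite on $[-1,1]$ as a function of $t=\langle x,y\rangle$, uniformly over dimension, and then converting "concave, $O(d)$-symmetric functional" into "unique maximiser is $\sigma$." The sign analysis of the Taylor coefficients of $(1-u)^{p/2}$ for $0<p/2<1$ is elementary, and positive-definiteness of each $t^{2\ell}$ on the sphere is the tensor-power trick above; the one subtlety is the uniqueness claim, which requires showing that any $\mu$ with $I_p(\mu)=I_p(\sigma)$ must have vanishing $\ell$-th harmonic moments for all $\ell\ge1$ with $c_\ell(p)\ne 0$ — and since $c_\ell(p)<0$ for every $\ell\ge1$, this pins down all moments and forces $\mu=\sigma$. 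A secondary, more bookkeeping-heavy obstacle is assembling the exact constants in part (2) for general $d$ so that the strict inequality $I_p(\sigma)<I_p(\nu)$ is visibly correct for all $p>2$ and all $d\ge2$; this is where I expect the bulk of the routine computation to sit, but no conceptual difficulty.
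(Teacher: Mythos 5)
Your part~(1) follows the same broad strategy as the paper — expand $(1-t^2)^{p/2}=1+\sum_{k\ge1}c_k t^{2k}$ with $c_k<0$ for $0<p<2$, then exploit positive-definiteness of $t\mapsto t^{2k}$ via $\langle x,y\rangle^{2k}=\langle x^{\otimes 2k},y^{\otimes 2k}\rangle$ — but there is a genuine gap at the crucial comparison step. You write that $1-I_p(\mu)=\sum_{k\ge1}|c_k|\,\|\!\int x^{\otimes 2k}\,d\mu\|^2$ is a sum of squares, ``with equality iff all the relevant moments of $\mu$ vanish,'' and then assert ``by rotational averaging, $\sigma$ annihilates exactly those moments.'' That last assertion is false for the moments your tensor-power identity actually produces: $\int x^{\otimes 2k}\,d\sigma\neq 0$ (e.g.\ $\int x\otimes x\,d\sigma=\tfrac1d I_d$), and indeed no probability measure on $S^{d-1}$ can make these vanish (the trace of the second-moment tensor is forced to equal $1$). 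So ``$I_p(\mu)\le 1$'' on its own never closes the argument. What is needed, and what the paper supplies, is the strict comparison $\|\!\int x^{\otimes 2k}\,d\mu\|^2\ge\|\!\int x^{\otimes 2k}\,d\sigma\|^2$ for every $k$: averaging over $O(d)$ is the orthogonal projection of $\mathrm{Sym}^{2k}(\mathbb R^d)$ onto its $O(d)$-invariant subspace, and that projection of $\int x^{\otimes 2k}\,d\mu$ is precisely $\int x^{\otimes 2k}\,d\sigma$, so the Pythagorean identity gives the inequality. You would need to make this step explicit; alternatively, your brief Gegenbauer remark can be turned into a correct argument, because the harmonic moments $\int Y_\ell\,d\mu$ ($\ell\ge1$) \emph{do} vanish for $\sigma$ and $t^{2k}$ has nonnegative Gegenbauer coefficients — but as written you conflate those harmonic moments with the tensor moments, and only the former vanish for $\sigma$.

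For part~(2) in $d=2$, your route via the explicit formula $I_p(\sigma)=\Gamma(\tfrac{p+1}{2})/(\sqrt\pi\,\Gamma(\tfrac p2+1))$ and the digamma monotonicity is correct but heavier than necessary; the paper gets the strict inequality immediately from $\sin^p\theta\le\sin^2\theta$ for $p>2$ (strict on a set of positive measure), giving $I_p(\sigma)<\tfrac1\pi\int_0^\pi\sin^2\theta\,d\theta=\tfrac12=I_p(\nu)$. Your observation that the na\"ive embedding of the four-point measure into $S^{d-1}$ does not beat $\sigma_{d-1}$ for $p$ near $2$ when $d\ge 3$ (since $I_2(\sigma_{d-1})=1-\tfrac1d>\tfrac12$) is a fair point about the paper's closing remark; the correct higher-dimensional competitor is the $2d$-point measure $\tfrac1{2d}\sum_{i=1}^d(\delta_{e_i}+\delta_{-e_i})$, which yields $1-\tfrac1d$ for all $p$ while $I_p(\sigma_{d-1})<1-\tfrac1d$ strictly for $p>2$ by the same pointwise inequality.
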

\begin{proof}
For the case $0<p<2$
write $\alpha=\tfrac p2\in(0,1)$ and set $F(t)=(1-t^2)^\alpha$ for $t\in[-1,1]$.
By the generalised binomial theorem,
\[
(1-u)^\alpha = \sum_{k=0}^\infty \binom{\alpha}{k}(-1)^k u^k,\qquad |u|<1,
\]
so with $u=t^2$,
\[
F(t)=1+\sum_{k=1}^\infty c_k t^{2k},\qquad c_k=\binom{\alpha}{k}(-1)^k.
\]
For $\alpha\in(0,1)$ the sign of $\binom{\alpha}{k}$ is $(-1)^{k-1}$ for $k\ge1$, hence $c_k<0$ for all $k\ge1$.

Let $\mu$ be any probability measure on $S^{d-1}$ and let $X,X'$ be independent with law $\mu$. Then,
\[
I_p(\mu)=\mathbb{E}\big[F(\langle X,X'\rangle)\big]
=1+\sum_{k=1}^\infty c_k\,\mathbb{E}\big[\langle X,X'\rangle^{2k}\big].
\]
Thus it suffices to prove that for each $k\ge1$,
\[
\mathbb{E}_\mu\big[\langle X,X'\rangle^{2k}\big]\ge \mathbb{E}_\sigma\big[\langle U,U'\rangle^{2k}\big],
\]
where $U,U'$ are independent uniform on $S^{d-1}$.

Define the $k$-th moment symmetric tensor
\[
M^{(k)}(\mu):=\int_{S^{d-1}} x^{\otimes k}\,d\mu(x)\in\mathrm{Sym}^k(\mathbb{R}^d).
\]
With the natural inner product on symmetric tensors one checks
\[
\|M^{(k)}(\mu)\|^2 = \mathbb{E}\big[\langle X,X'\rangle^{2k}\big].
\]
The orthogonal group $O(d)$ acts on $\mathrm{Sym}^k(\mathbb{R}^d)$ by $R\cdot T=R^{\otimes k}T$, and averaging over $O(d)$ is the orthogonal projection $P_{\mathrm{inv}}$ onto the subspace of $O(d)$-invariant tensors. Consequently for any tensor $T$,
\[
\|T\|^2 = \|P_{\mathrm{inv}}T\|^2 + \|(I-P_{\mathrm{inv}})T\|^2 \ge \|P_{\mathrm{inv}}T\|^2.
\]
For $T=M^{(k)}(\mu)$ the projection $P_{\mathrm{inv}}M^{(k)}(\mu)$ equals the $k$-th moment tensor of the averaged measure $\int_{O(d)}R_*\mu\,dR$, which is the uniform measure $\sigma$. Hence
\[
\|M^{(k)}(\mu)\|^2 \ge \|M^{(k)}(\sigma)\|^2,
\]
as required. 

Now, since each $c_k<0$ and $\|M^{(k)}(\mu)\|^2\ge\|M^{(k)}(\sigma)\|^2$, we get
\[
I_p(\mu)=1+\sum_{k\ge1} c_k\|M^{(k)}(\mu)\|^2 \le 1+\sum_{k\ge1} c_k\|M^{(k)}(\sigma)\|^2 = I_p(\sigma).
\]
Therefore $\sigma$ maximizes $I_p$ when $0<p<2$.

For $p>2$, let $d=2$ and consider
\[
\mu=\tfrac14(\delta_{e_1}+\delta_{-e_1}+\delta_{e_2}+\delta_{-e_2}).
\]
For $X,X'\sim\mu$ independent there are $16$ ordered pairs; exactly $8$ of these are orthogonal pairs giving $|X\wedge X'|^p=1$, and the rest give $0$. Hence
\[
I_p(\mu)=\frac{8}{16}\cdot 1 = \tfrac12.
\]
For the uniform $\sigma$ on $S^1$,
\[
I_p(\sigma) = \frac{1}{\pi}\int_0^\pi \sin^p\theta\,d\theta.
\]
But for $p>2$ one has $\sin^p\theta<\sin^2\theta$ on a set of positive measure, so
\[
I_p(\sigma) < \frac{1}{\pi}\int_0^\pi \sin^2\theta\,d\theta = \tfrac12.
\]
Thus $I_p(\mu)>I_p(\sigma)$ for every $p>2$. The same construction embedded in higher dimensions (take the four points $\pm e_1,\pm e_2\in S^{d-1}$) yields a counterexample for every $d\ge2$.
\end{proof}

\begin{theorem}
Let \(S\) be a \((d-1)\)-dimensional smooth hypersurface in \(\mathbb{R}^d\), 
and let \(\sigma\) be a probability measure on \(S\).
Let \(v:S\to S^{d-1}\) be a measurable vector field taking values on the unit sphere.
For any \(p\ge 2\), define
\[
J_p(v,\sigma)
:=\iint_{S\times S}\big|v(x)\wedge v(y)\big|^p\,d\sigma(x)\,d\sigma(y).
\]
Then
\[
J_p(v,\sigma)\le 1-\frac{1}{d}.
\]
Moreover, equality holds if the push-forward measure
\(\mu:=v_\#\sigma\) (i.e. \(\mu(A)=\sigma(v^{-1}(A))\))
is supported on the set \(\{\pm e_1,\dots,\pm e_d\}\) for some orthonormal basis 
\(\{e_i\}_{i=1}^d\) and satisfies
\(\mu(\{e_i\})+\mu(\{-e_i\})=1/d\) for all \(i=1,\dots,d\).
\end{theorem}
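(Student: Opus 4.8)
The plan is to reduce the whole statement to an elementary fact about the second moment matrix of the push-forward measure $\mu := v_\#\sigma$, which lives on $S^{d-1}$. The starting observation is purely pointwise: since $v$ takes values on the unit sphere, $|v(x)\wedge v(y)| = (1-\langle v(x),v(y)\rangle^2)^{1/2}\le 1$, and therefore, because $p\ge 2$,
\[
|v(x)\wedge v(y)|^p \;\le\; |v(x)\wedge v(y)|^2 \;=\; 1-\langle v(x),v(y)\rangle^2
\]
for all $x,y\in S$. Integrating this over $S\times S$ against $\sigma\otimes\sigma$ gives
\[
J_p(v,\sigma)\;\le\; 1-\iint_{S\times S}\langle v(x),v(y)\rangle^2\,d\sigma(x)\,d\sigma(y).
\]

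Next I would transfer the remaining integral to the sphere via $\mu=v_\#\sigma$ and express it through the second moment matrix $M:=\int_{S^{d-1}}u\otimes u\,d\mu(u)$, a positive semidefinite $d\times d$ matrix. A direct expansion in coordinates shows
\[
\iint_{S\times S}\langle v(x),v(y)\rangle^2\,d\sigma\,d\sigma \;=\; \iint_{S^{d-1}\times S^{d-1}}\langle u,w\rangle^2\,d\mu(u)\,d\mu(w) \;=\; \operatorname{tr}(M^2)\;=\;\|M\|_{\mathrm{HS}}^2 .
\]
Since every $u$ in the support of $\mu$ has $|u|=1$, we have $\operatorname{tr}(M)=\int|u|^2\,d\mu=1$. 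Writing $\lambda_1,\dots,\lambda_d\ge 0$ for the eigenvalues of $M$, the Cauchy--Schwarz inequality gives $\sum_i\lambda_i^2\ge\frac1d\big(\sum_i\lambda_i\big)^2=\frac1d$, i.e.\ $\|M\|_{\mathrm{HS}}^2\ge\tfrac1d$. Combining with the previous display yields $J_p(v,\sigma)\le 1-\tfrac1d$, as claimed.

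For the equality assertion, suppose $\mu=v_\#\sigma$ is supported on $\{\pm e_1,\dots,\pm e_d\}$ with $\mu(\{e_i\})+\mu(\{-e_i\})=1/d$ for all $i$. Then $M=\sum_{i=1}^d\big(\mu(\{e_i\})+\mu(\{-e_i\})\big)\,e_i\otimes e_i=\tfrac1d I_d$, so $\|M\|_{\mathrm{HS}}^2=1/d$ and the second inequality above is an equality. Moreover, any two points in $\{\pm e_1,\dots,\pm e_d\}$ are either orthogonal or antipodal, so for $\mu\otimes\mu$-almost every $(u,w)$ --- hence for $\sigma\otimes\sigma$-almost every $(x,y)$ --- one has $|v(x)\wedge v(y)|\in\{0,1\}$, which makes the pointwise bound $|v(x)\wedge v(y)|^p=|v(x)\wedge v(y)|^2$ an equality as well. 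Tracing both equalities back through the argument gives $J_p(v,\sigma)=1-1/d$.

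There is no serious obstacle here; the only point deserving attention is the measure-theoretic bookkeeping for the push-forward (well-definedness of $\mu$ and the change-of-variables identity $\iint g(v(x),v(y))\,d\sigma\,d\sigma=\iint g(u,w)\,d\mu\,d\mu$), which is routine given measurability of $v$. One could additionally record the sharper fact that equality forces $M=\tfrac1d I_d$ and, when $p>2$, that $v(x)\perp v(y)$ or $v(x)=\pm v(y)$ for $\sigma\otimes\sigma$-a.e.\ $(x,y)$, so that the stated configuration is essentially the only extremiser; but the statement as phrased only asks for sufficiency.
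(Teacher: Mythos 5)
Your proof is correct and follows essentially the same route as the paper: the pointwise bound \((1-t)^{p/2}\le 1-t\) for \(t\in[0,1]\), the identity \(\iint\langle u,w\rangle^2\,d\mu\,d\mu=\|M\|_{\mathrm{HS}}^2\), and Cauchy--Schwarz on the eigenvalues of \(M\) with \(\operatorname{tr}M=1\). The only cosmetic difference is that you apply the pointwise inequality before pushing forward to \(\mu\) rather than after, which changes nothing of substance.
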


\begin{proof}
Let \(\mu=v_\#\sigma\) be the push-forward measure on \(S^{d-1}\), 
so that for any measurable set \(A\subset S^{d-1}\),
\(\mu(A)=\sigma(v^{-1}(A))\).
Then, by change of variables,
\[
J_p(v,\sigma)
=\iint_{S^{d-1}\times S^{d-1}}\big|u\wedge w\big|^p\,d\mu(u)\,d\mu(w)
=:I_p(\mu).
\]
For \(u,w\in S^{d-1}\), we have
\[
|u\wedge w|^2=1-\langle u,w\rangle^2\in[0,1].
\]
Let \(t=\langle u,w\rangle^2\in[0,1]\) and \(\alpha=p/2\ge1\).
Since \((1-t)^\alpha\le 1-t\) for \(t\in[0,1]\),
it follows that
\[
|u\wedge w|^p=(1-\langle u,w\rangle^2)^\alpha\le 1-\langle u,w\rangle^2.
\]
Integrating this inequality with respect to \(\mu\otimes\mu\) gives
\[
I_p(\mu)\le \iint_{S^{d-1}\times S^{d-1}} (1-\langle u,w\rangle^2)\,d\mu(u)\,d\mu(w)=:I_2(\mu).
\]
Define the moment matrix
\[
M=\int_{S^{d-1}} u\otimes u\,d\mu(u).
\]
Then \(\operatorname{tr}M=1\), and
\[
\iint_{S^{d-1}\times S^{d-1}} \langle u,w\rangle^2\,d\mu(u)\,d\mu(w)
=\|M\|_F^2=\sum_{i=1}^d\lambda_i^2,
\]
where \(\lambda_1,\dots,\lambda_d\) are the eigenvalues of \(M\).
By the Cauchy–Schwarz inequality,
\[
\sum_{i=1}^d\lambda_i^2 \ge \frac{1}{d}\left(\sum_{i=1}^d\lambda_i\right)^2
= \frac{1}{d}.
\]
Hence,
\[
I_2(\mu)=1-\sum_{i=1}^d\lambda_i^2 \le 1-\frac{1}{d}.
\]
Combining inequalities,
\[
J_p(v,\sigma)=I_p(\mu)\le I_2(\mu)\le 1-\frac{1}{d}.
\]
\textit{Equality conditions.}
Equality holds if and only if:
\begin{itemize}
  \item[(i)] \((1-\langle u,w\rangle^2)^\alpha=1-\langle u,w\rangle^2\) 
  for \(\mu\otimes\mu\)-almost every \((u,w)\).
  For \(\alpha>1\), this occurs only when 
  \(\langle u,w\rangle^2\in\{0,1\}\) almost everywhere.
  (For \(p=2\), this inequality is an equality for all \(u,w\).)
  \item[(ii)] \(\sum_i\lambda_i^2=1/d\), i.e.\ all eigenvalues of \(M\) are equal to \(1/d\).
  This is equivalent to \(M=\tfrac{1}{d}I_d\),
  which happens precisely when \(\mu\) is distributed uniformly over an orthonormal basis and its negatives,
  with \(\mu(\{e_i\})+\mu(\{-e_i\})=1/d\) for each \(i\).
\end{itemize}
This completes the proof.
\end{proof}

\paragraph{Remark.}
For \(p=2\), the inequality step \((1-t)^{p/2}\le1-t\) is an equality for all \(t\),
so all probability measures \(\mu\) with \(M=\tfrac{1}{d}I_d\) achieve the bound.
For \(p>2\), equality additionally requires that 
\(\langle u,w\rangle^2\in\{0,1\}\) \(\mu\otimes\mu\)-almost everywhere,
which forces \(\mu\) to be discrete as above.

\section{Sharp Estimates for Visibility}\label{section-5}
\noindent We start by recalling the principal definitions and objects that will be used throughout this section. The {\bf projection body} of a generalised $d$-hypersurface $\mathbb {S}=(S,\sigma,v)$ is the convex body $\Pi(\mathbb{S})$  with support function
\begin{equation*}
    h_{\Pi(\mathbb S)}(y)=\int_{S}|\langle y,v(x)\rangle|d\sigma(x).
\end{equation*}
We will denote its polar as 
\begin{equation*}
    K(\mathbb S)=\Pi(\mathbb S)^{\circ}
\end{equation*}
and the {\bf visibility} of $\mathbb{S}$ by
\begin{equation}
    \mathrm{vis}(\mathbb{S})= |K(\mathbb{S})|^{-1/d}.
\end{equation}
More generally, for $1\le p<\infty$ we can define the covex body $K^p(\mathbb{S})$ as the body with norm
\begin{equation}
\|y\|_{K^p(\mathbb{S})}=\bigg(\int_{S}|\langle y,v(x)\rangle|^pd\sigma(x)\bigg)^{1/p}
\end{equation}
and through this the {\bf p-visibility} of $\mathbb{S}$ as
\begin{equation}
\mathrm{vis}^p(\mathbb{S})=|K^p(\mathbb{S})|^{-1/d}.
\end{equation}
One of the central ideas in the works of Guth \cite{Guth} and Zhang \cite{Zhang} was to factorize $\mathrm{vis}(\mathbb{S})$—that is, to express it, up to absolute constants, as a product whose factors are the quantities $$
\sigma(E,\mathbb{S})=\int_{\mathbb{S}}\dots\int_{\mathbb{S}}|E^\perp\wedge\upsilon(x_1)\wedge\dots\wedge\upsilon(x_d)|d\sigma(x_1)\dots d\sigma(x_d).$$ 
In what follows, we provide a sharp comparison between the visibility and the product.
\subsection{Case $p=1$.}
We start with a crucial identity that was proved in \cite[Theorem 3.7]{BrazitikosCarberyMacIntyre2023}.
\begin{equation}\label{wedge_product_identity}
\int_{\mathbb{S}}\dots\int_{\mathbb{S}}|\upsilon(x_1)\wedge\dots\dots\wedge\upsilon(x_d)|d\sigma(x_1)\dots d\sigma(x_d)=\frac{d!}{2^d}|\Pi(\mathbb{S})|
\end{equation}
The first main step in \cite[Theorem 3.1]{Zhang} asserts that under the assumption $h_{\Pi(\mathbb S)}(y)\geq 1$, we have 
$$\int_{\mathbb{S}}\dots\int_{\mathbb{S}}|\upsilon(x_1)\wedge\dots\dots\wedge\upsilon(x_d)|d\sigma(x_1)\dots d\sigma(x_d)\gtrsim \mathrm{vis}(\mathbb{S})^d. $$ Using the above crucial identity and the definition of visibility, we observe that the above Theorem is equivalent to 
$$|\Pi(\mathbb{S})||\Pi(\mathbb{S})^{\circ}|\gtrsim 1.$$ Note that the last one is the so-called reverse Santal\'{o} inequality. For general convex bodies it was conjectured by Mahler that for all symmetric convex bodies $K\subseteq\mathbb{R}^n$, 
$$|K||K^{\circ}|\geq\frac{4^n}{n!}.$$ Note that we obtain the lower bound for all images of the cube. Bourgain and Milman first proved in \cite{BourgainMilman1987} that the above inequality holds for an absolute constant $c$ instead of $4$. 
However, Gordon, Meyer and Reisner proved in \cite{Reisnzer-Gordon-Meyer} that Mahler's conjecture is true for all convex bodies that are zonoids.

Note that $\Pi(\mathbb{S})$ is a convex and compact set. Moreover, the assumption $h_{\Pi(\mathbb S)}(y)\geq 1$ implies that $\Pi(\mathbb{S})$ contains the unit Euclidean Ball. Therefore, it has non-empty interior which means that it is a convex body. From the above discussion we end up with the following theorem.
\begin{theorem}\label{vis_santalo}
Let $\mathbb{S}$ be a hypersurface. If $\Pi(\mathbb{S})$ has non-empty interior, then
$$\int_{\mathbb{S}}\dots\int_{\mathbb{S}}|\upsilon(x_1)\wedge\dots\dots\wedge\upsilon(x_d)|d\sigma(x_1)\dots d\sigma(x_d)\geq  (2\mathrm{vis}(\mathbb{S}))^d.$$
Moreover, we have equality if and only if $\Pi(\mathbb{S})=T(B_{\infty}^d)$ and $T\in GL(d,\mathbb{R})$.
\end{theorem}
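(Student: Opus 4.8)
The plan is to recognise the assertion as an exact, sharp instance of the reverse Santal\'{o} (Bourgain--Milman) inequality for the zonoid $\Pi(\mathbb S)$, for which the optimal constant and the equality cases are classically known. First I would use the identity \eqref{wedge_product_identity} to rewrite the left-hand side of the claimed inequality as $\tfrac{d!}{2^d}\,|\Pi(\mathbb S)|$, and recall that by definition $(2\,\mathrm{vis}(\mathbb S))^d = 2^d\,|K(\mathbb S)|^{-1} = 2^d\,|\Pi(\mathbb S)^{\circ}|^{-1}$. Since $\Pi(\mathbb S)$ is origin-symmetric, the hypothesis that it has non-empty interior forces $0$ to be an interior point, so $\Pi(\mathbb S)^{\circ}$ is bounded and both volumes are finite and positive. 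Clearing denominators, the statement becomes precisely
\[
|\Pi(\mathbb S)|\,|\Pi(\mathbb S)^{\circ}| \ \ge\ \frac{4^d}{d!},
\]
with equality exactly when $\Pi(\mathbb S)$ is a linear image of $B_{\infty}^d$. I would note in passing that the stronger normalisation $h_{\Pi(\mathbb S)}\ge 1$ used in \cite{Zhang} is not needed, since the volume product on the left is invariant under $GL(d,\mathbb R)$.

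Second, I would record that $\Pi(\mathbb S)$ is a zonoid: its support function $h_{\Pi(\mathbb S)}(y)=\int_S|\langle y,v(x)\rangle|\,d\sigma(x)$ is an integral average, against the $\sigma$-finite measure $\sigma$, of the support functions $y\mapsto|\langle y,v(x)\rangle|$ of the symmetric segments $[-v(x),v(x)]$; exhausting $S$ by sets of finite $\sigma$-measure exhibits $\Pi(\mathbb S)$ as a Hausdorff limit of finite Minkowski sums of segments, hence as a zonoid. Third, I would invoke the Gordon--Meyer--Reisner theorem \cite{Reisnzer-Gordon-Meyer} resolving Mahler's conjecture for zonoids: every zonoid $Z\subset\mathbb R^d$ satisfies $|Z|\,|Z^{\circ}|\ge 4^d/d!$, with equality if and only if $Z$ is a linear image of the cube. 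Applied to $Z=\Pi(\mathbb S)$ this gives the inequality; and since \eqref{wedge_product_identity} is an exact identity and $|B_{\infty}^d|\,|(B_{\infty}^d)^{\circ}|=4^d/d!$, equality in the theorem is equivalent to equality in Reisner's inequality, i.e.\ $\Pi(\mathbb S)=T(B_{\infty}^d)$ with $T\in GL(d,\mathbb R)$; the converse is immediate by reversing the same chain of (in)equalities.

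The argument is short once it is set up, and I do not expect a genuine obstacle. The two points that need a little care are: (a) the reduction from a general $\sigma$-finite measure $\sigma$ to the finite-measure case when identifying $\Pi(\mathbb S)$ as a bona fide zonoid, so that Reisner's theorem and, crucially, its equality statement apply verbatim; and (b) quoting that equality statement in exactly the form required here --- that among origin-symmetric convex bodies the minimisers of the volume product within the class of zonoids are precisely the parallelepipeds. Everything else is bookkeeping with the identity \eqref{wedge_product_identity} and the definition of $\mathrm{vis}$.
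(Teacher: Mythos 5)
Your proposal is correct and follows essentially the same route as the paper: you use identity \eqref{wedge_product_identity} to convert the left-hand side into $\tfrac{d!}{2^d}|\Pi(\mathbb{S})|$, rewrite the right-hand side as $2^d|\Pi(\mathbb{S})^\circ|^{-1}$, reduce the claim to $|\Pi(\mathbb{S})|\,|\Pi(\mathbb{S})^\circ|\ge 4^d/d!$, and invoke the Gordon--Meyer--Reisner theorem for zonoids together with its equality characterisation. The only addition beyond the paper's exposition is the explicit remark that Zhang's normalisation $h_{\Pi(\mathbb{S})}\ge 1$ is unnecessary because the volume product is $GL(d)$-invariant, which is correct and consistent with the paper's hypothesis of non-empty interior.
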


\noindent A similar identity to \eqref{wedge_product_identity} was proved in \cite{BrazitikosCarberyMacIntyre2023},  regarding ``the factors of visibility".
\begin{align}\label{sigma_proj}
&\sigma(E,\mathbb{S})=\int_{\mathbb{S}}\dots\int_{\mathbb{S}}|E^\perp\wedge\upsilon(x_1)\wedge\dots\wedge\upsilon(x_d)|d\sigma(x_1)\dots d\sigma(x_d)\\
\nonumber&=\int_{\mathbb{S}}\dots\int_{\mathbb{S}}|P_E(\upsilon(x_1))\wedge\dots\wedge P_E(\upsilon(x_d))|d\sigma(x_1)\dots d\sigma(x_d)\\
\nonumber&=\frac{k_i!}{2^{k_i}}|P_E(\Pi(\mathbb{S}))|.
\end{align}

On the one hand, to give an upper bound for visibility, we will use the following affine Loomis--Whitney inequality from \cite{GBBC}.

\begin{theorem}[Affine Loomis--Whitney inequality]
    Let $\{w_i\}_{i=1}^{d}$ be a basis of $\mathbb{R}^d$ and let $(\sigma_1,\dots, \sigma_m)$ be a uniform cover of $[d]$ with weights $(p_1,\dots ,p_m)$. Let $H_i = span\{w_j : j\in\sigma_i\}$, and $p =\sum_{i=1}^{m}p_i$. Then, for every compact $K\subseteq\mathbb{R}^d$
we have
\begin{equation*}
    |K|\le BL_2\prod_{i=1}^{m}|P_{H_{i}}(K)|^{p_i},
\end{equation*}
 where $BL_2:=\dfrac{\prod_{i=1}^{m}|\wedge_{k\in\sigma_{i}}w_{k}|^{p_{i}}}{|\wedge_{i=1}^{d}w_{i}|}$.
\end{theorem}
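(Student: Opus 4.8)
The plan is to reduce the inequality to the classical coordinate Loomis--Whitney inequality --- equivalently, the uniform-cover case of Finner's inequality, which has constant $1$ --- by an affine change of variables. The subtlety, and the whole point, is that the correct change of variables is the \emph{inverse transpose} of the map $T\colon e_i\mapsto w_i$, not $T$ itself: although $T$ carries the coordinate subspace $F_{\sigma_i}:=\operatorname{span}\{e_j:j\in\sigma_i\}$ onto $H_i$, it does not carry $F_{\sigma_i}^{\perp}$ onto $H_i^{\perp}$, so conjugating by $T$ does not turn the orthogonal projections $P_{H_i}$ into coordinate projections; conjugating by $(T^{-1})^{\top}$ does.

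\noindent\textbf{Key steps.} First I would set $S:=(T^{-1})^{\top}$, so that $Se_i=w_i^{*}$ (the dual basis, $\langle w_i^{*},w_j\rangle=\delta_{ij}$) and $|\det S|=|\det T|^{-1}=|w_1\wedge\cdots\wedge w_d|^{-1}$, and put $K':=S^{-1}K=T^{\top}K$, so that $|K|=|w_1\wedge\cdots\wedge w_d|^{-1}|K'|$. The key computation is that $\ker(P_{H_i}S)=S^{-1}(H_i^{\perp})=T^{\top}(H_i^{\perp})=F_{\sigma_i}^{\perp}$: the last equality holds because $v\perp w_k$ for every $k\in\sigma_i$ forces $(T^{\top}v)_k=\langle v,w_k\rangle=0$ for $k\in\sigma_i$, and the dimensions agree. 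Hence $P_{H_i}S$ factors as $R_i\circ P_{F_{\sigma_i}}$ for a linear isomorphism $R_i\colon F_{\sigma_i}\to H_i$ with $R_ie_j=P_{H_i}(w_j^{*})$ for $j\in\sigma_i$; and $\{P_{H_i}(w_j^{*})\}_{j\in\sigma_i}$ is precisely the basis of $H_i$ dual to $\{w_k\}_{k\in\sigma_i}$, so by the classical fact that the Gram matrix of a basis and that of its dual are inverse to each other, $|\det R_i|=|\wedge_{k\in\sigma_i}w_k|^{-1}$.

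\noindent\textbf{Assembling.} After this the rest is bookkeeping of Lebesgue measure. Since $P_{F_{\sigma_i}}\circ S^{-1}=R_i^{-1}\circ P_{H_i}$, one gets $P_{F_{\sigma_i}}(K')=R_i^{-1}\bigl(P_{H_i}(K)\bigr)$, hence $|P_{F_{\sigma_i}}(K')|=|\det R_i|^{-1}\,|P_{H_i}(K)|=|\wedge_{k\in\sigma_i}w_k|\,|P_{H_i}(K)|$. Applying the classical Finner inequality for coordinate projections to the compact set $K'$ and the uniform cover $(\sigma_i,p_i)$ of $[d]$ yields $|K'|\le\prod_{i=1}^{m}|P_{F_{\sigma_i}}(K')|^{p_i}$, and combining with $|K|=|w_1\wedge\cdots\wedge w_d|^{-1}|K'|$ gives
\[
|K|\le|w_1\wedge\cdots\wedge w_d|^{-1}\prod_{i=1}^{m}\bigl(|\wedge_{k\in\sigma_i}w_k|\,|P_{H_i}(K)|\bigr)^{p_i}=BL_2\prod_{i=1}^{m}|P_{H_i}(K)|^{p_i}.
\]

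\noindent\textbf{Main obstacle.} The only non-routine step is the middle one: realising that the conjugation must be by $(T^{-1})^{\top}$ (so that each $H_i^{\perp}$ is sent to the coordinate subspace $F_{\sigma_i}^{\perp}$, which is what makes $P_{H_i}$ become a coordinate projection), and then evaluating the Jacobians $|\det R_i|$ through the dual-basis/Gram-determinant identity so that the constant collapses exactly to $BL_2$. Everything else --- transport of Lebesgue measure under linear maps and the classical Loomis--Whitney/Finner inequality --- is standard and I would treat it as known.
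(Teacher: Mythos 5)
Your proof is correct. Note that the paper states this Affine Loomis--Whitney inequality as a cited result (attributed to \cite{GBBC}) without reproducing a proof, so there is no in-paper argument to compare against; your task amounts to supplying a proof the paper omits.

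Your route --- transporting $K$ by $T^{\top}$ where $T\colon e_i\mapsto w_i$, so that each $P_{H_i}$ is conjugated to the coordinate projection $P_{F_{\sigma_i}}$, and then invoking the unit-constant Finner/Loomis--Whitney inequality --- is the natural and presumably the intended one. The delicate points are handled correctly: you identify that $T^{\top}$ (not $T$) carries $H_i^{\perp}$ onto $F_{\sigma_i}^{\perp}$, you extract the induced isomorphisms $R_i\colon F_{\sigma_i}\to H_i$ with $R_i e_j=P_{H_i}(w_j^{*})$, you check that $\{P_{H_i}(w_j^{*})\}_{j\in\sigma_i}$ is the dual basis of $\{w_k\}_{k\in\sigma_i}$ inside $H_i$ (since $\langle P_{H_i}w_j^{*},w_k\rangle=\langle w_j^{*},w_k\rangle=\delta_{jk}$ for $k\in\sigma_i$), and you use the Gram-matrix duality $G^{*}=G^{-1}$ to obtain $|\det R_i|=|\wedge_{k\in\sigma_i}w_k|^{-1}$. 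Combined with $|K|=|w_1\wedge\cdots\wedge w_d|^{-1}|K'|$ and $|P_{F_{\sigma_i}}(K')|=|\wedge_{k\in\sigma_i}w_k|\,|P_{H_i}(K)|$, the constant collapses exactly to $BL_2$. This is complete and rigorous; the only standard inputs you treat as known (Finner's inequality with uniform cover, transport of Lebesgue measure, Gram duality) are indeed standard and safe to cite.
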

On the other hand, to give a lower bound we will use the reverse dual Loomis--Whitney inequality from \cite{alonso2021reverse}.

\begin{theorem}[Reverse dual Loomis Whitney]\label{reverse dual}
There exists an absolute constant $C > 0$, such that for every centered convex body $K\in\mathbb{R}^d$, there exists an orthonormal basis $\{w_i\}_{i=1}^{d}$ of $\mathbb{R}^d$ such that for any uniform cover $(\sigma_1,\dots, \sigma_m)$ of $[d]$ with weights $(p_1,\dots ,p_m)$, $p =\sum_{i=1}^{m}p_i$ and $H_i = span\{w_j : j\in\sigma_i\}$ we have
\begin{equation*}
     C^{(p-1)d}|K|\le\prod_{i=1}^m|K\cap H_{i}|^{p_{i}}.
\end{equation*}
\end{theorem}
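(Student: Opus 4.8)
The plan is to reduce the inequality to a central-section estimate in isotropic position. Write $K$ for the (origin-symmetric) body. The key observation is that, for a \emph{fixed} orthonormal basis $\{w_i\}_{i=1}^d$, the inequality is invariant under every linear map $T=\operatorname{diag}(t_1,\dots,t_d)$ diagonal in that basis: each $H_i=\operatorname{span}\{w_j:j\in\sigma_i\}$ is a coordinate subspace, so $TH_i=H_i$ and $|TK\cap H_i|=\bigl(\prod_{j\in\sigma_i}|t_j|\bigr)|K\cap H_i|$, whence
\[
\prod_{i=1}^m|TK\cap H_i|^{p_i}=\prod_{j=1}^d|t_j|^{\sum_{i:\,j\in\sigma_i}p_i}\cdot\prod_{i=1}^m|K\cap H_i|^{p_i}=|\det T|\cdot\prod_{i=1}^m|K\cap H_i|^{p_i},
\]
which matches $|TK|=|\det T|\,|K|$; we used the defining property $\sum_{i:\,j\in\sigma_i}p_i=1$ of a uniform cover. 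Taking $\{w_i\}$ to be an eigenbasis of the inertia matrix $M_K=\int_K x\otimes x\,dx$ and then applying the corresponding diagonalising $T$ turns $M_{TK}$ into a scalar multiple of the identity, so we may assume $K$ is symmetric and in isotropic position with the $H_i$ coordinate subspaces; normalise $|K|=1$ and let $L_K$ denote its isotropic constant. (If ``centered'' means only centroid-at-origin, a symmetrization precedes this, changing each $|K\cap H_i|$ by a factor $e^{O(d_i)}$ which combines to $e^{O(d)}$ since $\sum_i p_id_i=d$, and is absorbed into $C$.)

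Next, for such $K$ and any subspace $E$ of codimension $k$, the slicing function $g(y):=|K\cap(E+y)|$ on $E^{\perp}\cong\mathbb R^k$ is a centered log-concave probability density (Brunn--Minkowski, $\int g=|K|=1$) with covariance matrix $L_K^2 I_k$ by isotropicity. Fradelizi's inequality $\|g\|_\infty\le e^k g(0)$ together with the universal lower bound on the isotropic constant of a log-concave measure gives the higher-codimensional Hensley--Ball estimate $|K\cap E|=g(0)\ge (c_0/L_K)^k$ with an absolute $c_0$. Applying this with $E=H_i$, of codimension $d-d_i$ where $d_i=|\sigma_i|$, and raising to the power $p_i$,
\[
\prod_{i=1}^m|K\cap H_i|^{p_i}\ \ge\ (c_0/L_K)^{\sum_{i}p_i(d-d_i)}=(c_0/L_K)^{(p-1)d},
\]
because the uniform-cover identities give $\sum_i p_id_i=\sum_j\sum_{i:\,j\in\sigma_i}p_i=d$, hence $\sum_i p_i(d-d_i)=pd-d=(p-1)d$. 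As $|K|=1$, this is exactly the asserted inequality with $C=c_0/L_K$.

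Thus the theorem holds with $C$ of the order $1/\sup\{L_K:\dim K=d\}$, and this supremum is $O(1)$ by the resolution of Bourgain's slicing problem, so $C$ is genuinely absolute. I expect the only real subtlety to be this: one must invoke the Hensley--Ball lower bound for \emph{all} coordinate central sections simultaneously, so that the single factor $c_0/L_K$ survives a product over a cover of possibly unbounded cardinality, and this works precisely because the exponents are tied together by $\sum_i p_i(d-d_i)=(p-1)d$. Cruder substitutes are fatal here: replacing the isotropic estimate by the John inclusion $K\subseteq\sqrt d\,\mathcal E$ (to avoid $L_K$ altogether), or bounding $d-d_i$ by $d$, injects a factor $d^{\Theta(d)}$ which makes the constant deteriorate as the cover is refined --- it already fails to be dimension-free for the finest cover $\sigma_i=\{i\}$ and for the Loomis--Whitney cover $\sigma_i=[d]\setminus\{i\}$.
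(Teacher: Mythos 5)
The paper does not prove Theorem~\ref{reverse dual}; it cites it from \cite{alonso2021reverse} and later only alludes to the mechanism (``an orthonormal basis that renders the linear map placing the body in the isotropic position a diagonal matrix''). Your reconstruction is exactly that route and is correct: the diagonal-map invariance built into the uniform-cover condition $\sum_{i:\,j\in\sigma_i}p_i=1$, the reduction to isotropic position via the eigenbasis of $\int_K x\otimes x\,dx$, the Hensley--Ball-type lower bound $|K\cap E|\gtrsim (c_0/L_K)^{\mathrm{codim}\,E}$ obtained from Fradelizi's inequality combined with the universal lower bound on the isotropic constant of a log-concave density, and the exponent bookkeeping $\sum_i p_i(d-d_i)=(p-1)d$. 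One caveat deserves flagging: the parenthetical appeal to symmetrisation in the merely-centred case is both unnecessary and, as written, would fail. For the Loomis--Whitney cover $\sigma_i=[d]\setminus\{i\}$ one has $p-1=1/(d-1)$, so a symmetrisation loss of order $e^{O(d)}$ cannot be absorbed into $C^{(p-1)d}$ with $C$ absolute. Fortunately your main argument already handles centred bodies without it: Fradelizi's bound $g(0)\ge e^{-k}\sup g$ needs only that the marginal density $g$ have barycentre at the origin, which holds as soon as the centroid of $K$ is at the origin, and the covariance computation for $g$ likewise uses only centredness. You should simply delete the symmetrisation remark.
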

We are now in position to formulate and prove the following theorem.
\begin{theorem}
Let $d_1+\dots+d_m=d$. Then,
\begin{equation*}
    a_d\inf_{\substack{E_j\in G_{d,d_j}}}\Bigg(\frac{\sigma(E_1,\mathbb{S})\dots\sigma(E_m,\mathbb{S})}{|E_1\wedge\dots\wedge E_m|}\Bigg)^{1/d}\le \mathrm{vis}(\mathbb{S})\le b_d\inf_{\substack{E_j\in G_{d,d_j}}}\Bigg(\frac{\sigma(E_1,\mathbb{S})\dots\sigma(E_m,\mathbb{S})}{|E_1\wedge\dots\wedge E_m|}\Bigg)^{1/d}
\end{equation*}
with $a_d=\displaystyle \Big(\frac{2^dC^{(m-1)d}}{\prod_{i=1}^{m}\omega_{i}^{2}\prod_{i=1}^{m}d_{i}!}\Big)^{1/d}$ and $\, b_d=\displaystyle\Big(\frac{d!}{2^d\prod_{i=1}^{m}d_i!}\Big)^{1/d}$.
\end{theorem}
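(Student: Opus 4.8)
The plan is to combine two one-sided estimates. For the upper bound on $\mathrm{vis}(\mathbb{S})$ I would start from $K(\mathbb{S})=\Pi(\mathbb{S})^{\circ}$ and apply the affine Loomis--Whitney inequality not to $\Pi(\mathbb{S})$ itself but in a way that produces the projections $P_{E_j^{\perp}}(\Pi(\mathbb{S}))$; more precisely, I would choose the uniform cover of $[d]$ associated with the block decomposition $d=d_1+\dots+d_m$ (each block $\sigma_i$ of size $d_i$, all weights $p_i=1$), pick the basis $\{w_i\}$ adapted to the subspaces $E_i$, and use $|\Pi(\mathbb{S})|\le BL_2\prod_i|P_{H_i}(\Pi(\mathbb{S}))|$. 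Then I substitute the identity \eqref{sigma_proj}, which rewrites each $|P_{E}(\Pi(\mathbb{S}))|$ as $\tfrac{2^{k_i}}{k_i!}\sigma(E,\mathbb{S})$, to turn the projection volumes into the factors $\sigma(E_i,\mathbb{S})$; the Jacobian factor $BL_2=\prod_i|\wedge_{k\in\sigma_i}w_k|/|\wedge_i w_i|$ is exactly $|E_1\wedge\dots\wedge E_m|^{-1}$ for an orthonormal-within-blocks choice, producing the $|E_1\wedge\dots\wedge E_m|$ in the denominator. Finally, passing to polars via the zonoid case of the reverse Santal\'o inequality (Theorem~\ref{vis_santalo}, i.e.\ $|\Pi(\mathbb{S})||\Pi(\mathbb{S})^{\circ}|\ge 2^d/d!\cdot$ appropriate constant) and taking the infimum over all admissible $(E_j)$ gives the claimed $b_d$.

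For the lower bound I would use the Binet ellipsoid / isotropic viewpoint rather than the John ellipsoid. Apply the reverse dual Loomis--Whitney inequality (Theorem~\ref{reverse dual}) to $K=K(\mathbb{S})$: there is an orthonormal basis $\{w_i\}$ so that $C^{(m-1)d}|K(\mathbb{S})|\le\prod_i|K(\mathbb{S})\cap H_i|^{p_i}$ for the block cover. Each section $K(\mathbb{S})\cap H_i$ is the polar, \emph{within} $H_i$, of the projection $P_{H_i}(\Pi(\mathbb{S}))$ — this is the standard duality $(\Pi(\mathbb{S})^{\circ})\cap H = (P_H\Pi(\mathbb{S}))^{\circ_H}$. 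So I bound $|K(\mathbb{S})\cap H_i|$ from above by the Santal\'o-type inequality in dimension $d_i$ (the $d_i$-dimensional ball being extremal gives the $\omega_i^2$ factors), converting it back to $|P_{H_i}(\Pi(\mathbb{S}))|^{-1}$, and then use \eqref{sigma_proj} again to express everything through $\sigma(E_i,\mathbb{S})$ and the $|E_1\wedge\dots\wedge E_m|$ term. The constant $C^{(m-1)d}$ survives, as do the $2^d$ (from the $K=\Pi^\circ$ normalization), the $\prod_i\omega_i^2$ (Santal\'o in each block), and the $\prod_i d_i!$, assembling into $a_d$.

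The main obstacle I expect is the bookkeeping of constants and the precise choice of the adapted basis so that the affine factor $BL_2$ (resp.\ the orthonormal basis furnished by Theorem~\ref{reverse dual}) is genuinely compatible with the subspaces $E_j\in G_{d,d_j}$ realizing the infimum — in general the extremal basis for the reverse dual Loomis--Whitney inequality is dictated by $K(\mathbb{S})$, not chosen freely, so one must argue that its associated block subspaces are a valid competitor in $\inf_{E_j\in G_{d,d_j}}$ and hence only \emph{increase} the right-hand side, which is exactly what is needed for a lower bound on $\mathrm{vis}$. A secondary technical point is making sure the sections/projections duality $(\Pi^\circ)\cap H=(P_H\Pi)^{\circ_H}$ is applied with the correct ambient-dimension volume normalizations, since that is where the $\omega_i$'s enter; once these identifications are set up, the rest is substitution of \eqref{sigma_proj} and \eqref{wedge_product_identity} and collecting powers, and taking infima on both ends preserves the displayed two-sided bound.
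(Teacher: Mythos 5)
Your proposal matches the paper's argument essentially line for line: for the upper bound, reverse Santal\'o for zonoids (Gordon--Meyer--Reisner) followed by the affine Loomis--Whitney inequality from \cite{GBBC} and the identity $|P_E(\Pi(\mathbb{S}))|=\tfrac{2^{d_i}}{d_i!}\sigma(E,\mathbb{S})$, with the bound holding for every admissible tuple $(E_j)$ so the infimum may be taken; for the lower bound, the reverse dual Loomis--Whitney inequality applied to $K(\mathbb{S})$ to produce a specific orthonormal basis and block subspaces, the section--projection duality $(\Pi^\circ)\cap H=(P_H\Pi)^{\circ_H}$ together with the $d_i$-dimensional Santal\'o inequality to convert sections to projection volumes, and the same identity again. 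Your two flagged subtleties (the extremal basis being dictated by $K(\mathbb{S})$ yet being a valid competitor for the infimum, and the dimension-dependent Santal\'o normalizations giving the $\prod\omega_{d_i}^2$ factors) are precisely the points the paper handles; the only slips are cosmetic (you wrote $P_{E_j^\perp}$ where $P_{E_j}$ is meant, and the Gordon--Meyer--Reisner constant is $4^d/d!$, not $2^d/d!$).
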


\begin{proof}
Let $\{w_i\}_{i=1}^{d}$ be a basis of $\mathbb{R}^n$, let $(\sigma_1,\dots, \sigma_m)$ be a 1-uniform cover of $[d]$ with $|\sigma_i|=k_i$, and for $i=1,\dots,m$, set $E_{i}=\mathrm{span}\{w_j : j\in\sigma_i\}$. Then, $k_1+\dots+k_m=d$ and  $|\wedge_{i=1}^{n}w_i|=|E_1\wedge\dots\wedge E_m|$.
For the upper bound, we successively apply the reverse Santaló inequality and the affine Loomis–Whitney inequality, taking $H_{i}=E_{i}$ and $p_{i}=1$ for $i=1,\dots,m$ so that
    \begin{align*}
        &vis(\mathbb{S})^d=\frac{1}{|K(\mathbb{S})|}
        \le \frac{d!}{4^{d}}|\Pi(\mathbb{S})|\\
        &\le\frac{d!}{4^{d}}BL_{2}\prod_{i=1}^{m}|P_{E_{i}}(\Pi(\mathbb{S}))|\\
        &=\frac{d!}{4^{d}}\frac{\prod_{i=1}^{m}|\wedge_{j\in\sigma_{i}}w_{i}|}{|\wedge_{i=1}^{n}w_{i}|}\prod_{i=1}^m\frac{2^{d_i}}{d_i!}\sigma(E_i,\mathbb{S})\\
        &=\frac{d!}{2^d\prod_{i=1}^{m}d_i!}\prod_{i=1}^{m}|\wedge_{j\in\sigma_{i}}w_{i}|\frac{\sigma(E_1,\mathbb{S})\dots\sigma(E_m,\mathbb{S})}{|E_1\wedge\dots\wedge E_m|}.
    \end{align*}
Since $|\wedge_{j\in\sigma_{i}}w_{i}|\le 1$, we arrive at the desired upper bound 
\begin{equation}
    vis(\mathbb{S})\le\Bigg(\frac{d!}{2^d\prod_{i=1}^{m}d_i!}\Bigg)^{1/d}\Bigg(\frac{\sigma(E_1,\mathbb{S})\dots\sigma(E_m,\mathbb{S})}{|E_1\wedge\dots\wedge E_m|}\Bigg)^{1/d}.
\end{equation}

For the lower bound, we select an orthonormal basis $\{u_i\}_{i=1}^{d}$, as prescribed by the reverse dual Loomis–Whitney inequality applied to the body $K(\mathbb{S})$. Setting $E_{i}= \mathrm{span}\{u_j : j\in\sigma_i\}$, where $(\sigma_1,\dots, \sigma_m)$ is the 1-uniform cover of $[d]$ as above, we obtain
\begin{align*}
&vis(\mathbb{S})^d=\frac{1}{|K(\mathbb{S})|}\\
&\ge\frac{C^{(m-1)d}}{\prod_{i=1}^{m}|K(\mathbb{S})\cap E_i|}\\
&\ge\frac{C^{(m-1)d}}{\prod_{i=1}^{m}\omega_{i}^{2}}\prod_{i=1}^{m}|P_{E_{i}}(\Pi(\mathbb{S}))|\\
&=\frac{C^{(m-1)d}\prod_{i=1}^{m}2^{d_i}}{\prod_{i=1}^{m}\omega_{i}^{2}\prod_{i=1}^{m}d_{i}!}\prod_{i=1}^{m}\sigma(E_i,\mathbb{S})\\
&=\frac{2^dC^{(m-1)d}}{\prod_{i=1}^{m}\omega_{i}^{2}\prod_{i=1}^{m}d_{i}!}\frac{\sigma(E_1,\mathbb{S})\dots\sigma(E_m,\mathbb{S})}{|E_1\wedge\dots\wedge E_m|}
\end{align*}
and we have
\begin{equation*}
   \Bigg(\frac{2^dC^{(m-1)d}}{\prod_{i=1}^{m}\omega_{i}^{2}\prod_{i=1}^{m}d_{i}!}\Bigg)^{1/d}\Bigg(\frac{\sigma(E_1,\mathbb{S})\dots\sigma(E_m,\mathbb{S})}{|E_1\wedge\dots\wedge E_m|}\Bigg)^{1/d}\le vis(\mathbb{S}),
\end{equation*}
as promised.
\end{proof}
A closer examination of the proof of Theorem \ref{reverse dual} reveals that the subspaces which reverse the dual Loomis--Whitney inequality were derived from an orthonormal basis that renders the linear map placing the body in the isotropic position a diagonal matrix. For a sharp lower bound we can take a different route by choosing the orthonormal basis from the Lewis position. 

\begin{proposition}[Lewis position]\label{lewis_lemma}
Let $(S,\sigma)$ be a finite measure space and $v:S\to\mathbb R^d$ measurable with 
\[
\int_S\|v(x)\|_2^p\,d\sigma(x)<\infty
\qquad(1< p<\infty).
\]
Define for $A\in L(\mathbb R^d)$ the functional
\[
a(A):=\Big(\int_S\|A v(x)\|_2^p\,d\sigma(x)\Big)^{1/p}
\]
Then there exists an invertible matrix $u\in GL(d)$ such that
\[
a(u)=1\qquad\text{and}\qquad a^*(u^{-1})=d,
\]
where $a^*$ is the dual norm on $L(\mathbb R^d)$ defined by $a^*(B)=\sup_{a(A)\le1}|\operatorname{tr}(B^T A)|$.
Moreover, for such $u$ the following isotropic identity holds
\begin{equation}
\int_S \|u v(x)\|_2^{p-2}\,(u v(x))\, \otimes (u v(x)) \,d\sigma(x) \;=\; \frac{1}{d}\,I_d.
\end{equation}
Additionally, the above also holds for $p=1$, provided that the vector field $v$ satisfies some extra assumptions, namely
\[
v\in L^1(S;\mathbb R^d),\qquad \sigma(\{x:\,v(x)=0\})=0,
\qquad \text{ess.\ span}\{v(x):x\in S\}=\mathbb R^d.
\]
\end{proposition}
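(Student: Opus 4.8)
The plan is to obtain the position by a compactness/extremal argument, the standard Lewis-type variational approach adapted to the functional $a(A)$, and then to derive the isotropic identity by computing the first-order optimality condition. First I would check that $a$ is a genuine norm on $L(\mathbb{R}^d)$: homogeneity is immediate, the triangle inequality follows from Minkowski's inequality in $L^p(S,\sigma)$, and definiteness follows because if $a(A)=0$ then $Av(x)=0$ $\sigma$-a.e., so $A$ annihilates $\mathrm{ess.\,span}\{v(x)\}$; under the stated nondegeneracy hypothesis (for $p=1$) or after first reducing to the subspace $W:=\mathrm{ess.\,span}\{v(x):x\in S\}$ and working on $W$ (for $1<p<\infty$; if $W\subsetneq\mathbb{R}^d$ then $a$ is only a seminorm and one restricts attention to $W$, which is what the integrability and finiteness assumptions make meaningful), this forces $A=0$. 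So $(L(\mathbb{R}^d),a)$ is a normed space, and $a^*$ is its dual norm under the trace pairing $\langle B,A\rangle=\operatorname{tr}(B^TA)$.

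Next I would set up the extremal problem: consider $\sup\{|\det u|^{1/d} : a(u)\le 1\}$ over $u\in L(\mathbb{R}^d)$. The set $\{a(u)\le1\}$ is a compact convex body in $L(\mathbb{R}^d)$ (closed by Fatou, bounded since $a$ is a norm), and $|\det u|^{1/d}$ is continuous, so a maximizer $u_0$ exists; rescaling we may assume $a(u_0)=1$, and $u_0$ is invertible since the max is positive (pick any invertible $A$ with $a(A)<\infty$ and normalize). Now I would write the Lagrange/first-variation condition: for any $B\in L(\mathbb{R}^d)$, the function $t\mapsto \log|\det(u_0+tB)| - d\log a(u_0+tB)$ has a critical point at $t=0$. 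Differentiating $\log|\det(u_0+tB)|$ gives $\operatorname{tr}(u_0^{-1}B)$, and differentiating $\log a(u_0+tB)$ gives $\frac{1}{a(u_0)^p}\int_S \|u_0v(x)\|_2^{p-2}\langle u_0v(x),Bv(x)\rangle\,d\sigma(x)$ (valid for $1<p<\infty$ by dominated convergence, using the $L^p$ integrability of $v$; the $p=1$ case needs the extra hypotheses precisely to justify this differentiation, since $\|\cdot\|_2^{p-2}=\|\cdot\|_2^{-1}$ is singular at $0$ — this is where $\sigma(\{v=0\})=0$ enters). Setting the derivative to zero for all $B$ and using that $\langle u_0v(x),Bv(x)\rangle = \operatorname{tr}\big(B\,(v(x)\otimes u_0v(x))\big)$, the condition becomes: the matrix $\int_S\|u_0v(x)\|_2^{p-2}\,v(x)\otimes(u_0v(x))\,d\sigma(x)$ equals $u_0^{-T}$ up to a scalar; composing with $u_0$ on the right gives $\int_S \|u_0v(x)\|_2^{p-2}\,(u_0v(x))\otimes(u_0v(x))\,d\sigma(x) = c\,I_d$. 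Taking the trace and using $a(u_0)=1$ forces $cd=1$, i.e. $c=1/d$, which is the claimed isotropic identity; set $u:=u_0$.

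Finally, for the normalization $a^*(u^{-1})=d$: from $a^*(B)=\sup_{a(A)\le1}|\operatorname{tr}(B^TA)|$ with $B=u^{-1}$, the isotropic identity gives a lower bound $a^*(u^{-1})\ge \operatorname{tr}(u^{-T}u) \cdot(\text{appropriate normalization})$; more precisely one tests against $A=u$ itself — note $a(u)=1$ — to get $a^*(u^{-1})\ge|\operatorname{tr}(u^{-1}u)|=d$, and the reverse inequality $a^*(u^{-1})\le d$ follows from the optimality condition: for any $A$ with $a(A)\le1$, Hölder's inequality in $L^p/L^{p'}$ applied to the isotropic representation, together with the extremality of $u$ (equivalently, the computation $\operatorname{tr}(u^{-T}A) = \int_S\|uv\|_2^{p-2}\langle uv,(Au^{-1})(uv)\rangle\,d\sigma \le \big(\int\|uv\|_2^p\big)^{1/p'}\big(\int\|Av\|_2^p\big)^{1/p}\cdot(\text{scaling}) = d\cdot a(A)$), yields $|\operatorname{tr}(u^{-T}A)|\le d$. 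I expect the main obstacle to be the $p=1$ case: justifying differentiability of $a$ at the optimum and integrability of $\|uv(x)\|_2^{-1}$ near $\{v=0\}$ requires exactly the three extra hypotheses, and one must argue that the singular integrand is still $\sigma$-integrable (which it is, since on $\{v\neq0\}$, $\|uv(x)\|_2^{-1}\cdot\|uv(x)\|_2^2 = \|uv(x)\|_2 \le \|u\|_{op}\|v(x)\|_2 \in L^1$, so the matrix $\int\|uv\|_2^{-1}(uv)\otimes(uv)\,d\sigma$ converges absolutely) — the delicate point is the one-sided differentiability and showing the critical-point equation still characterizes the maximizer.
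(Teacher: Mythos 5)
The paper does not reprove Lewis's theorem: it \emph{cites} it as a black box to get the matrix $u$ with $a(u)=1$ and $a^*(u^{-1})=d$, and then runs a purely convex-analytic argument. The observation $L(u)=\operatorname{tr}(u^{-T}u)=d=\sup_{a(A)\le1}L(A)$ places $\tfrac{1}{d}u^{-T}$ in $\partial a(u)$; for $1<p<\infty$ Fr\'echet differentiability makes the subdifferential a singleton, and computing $\nabla(a^p)$ explicitly yields the isotropic identity after one matrix manipulation. The $p=1$ case is then treated separately with one-sided directional derivatives and a measurable-selection argument for the a.e.-unique subgradient of $\|\cdot\|_2$ at the nonzero points $uv(x)$.

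Your proposal instead reproves Lewis's theorem from scratch: maximize $|\det u|^{1/d}$ over $\{a(u)\le1\}$, get the optimizer by compactness, and read off both the isotropic identity and the dual normalization from the first-order condition. This is the classical variational route and it works; what it buys you is a self-contained argument in which the existence, the isotropy, and the equality $a^*(u^{-1})=d$ all fall out of the same Euler--Lagrange computation, whereas the paper's route is shorter but leans on Lewis's theorem as an external input and derives only the isotropy from it. A few points you should tighten: (i) after the compactness step you should replace the maximizer $u_0$ by the symmetric positive-definite factor in its polar decomposition (left-multiplying by an orthogonal $O$ leaves both $a$ and $|\det|$ unchanged); this is needed because your lower bound on $a^*(u^{-1})$ tests against $A=u$ and computes $\operatorname{tr}(u^{-T}u)$, which equals $d$ only when $u$ is symmetric — for a generic invertible $u$ it does not; (ii) in the upper bound $a^*(u^{-1})\le d$, the factor $d$ comes from the Lagrange multiplier in $\operatorname{tr}(u^{-1}B)=d\int\|uv\|^{p-2}\langle uv,Bv\rangle\,d\sigma$, not from the H\"older step, so the chain should read $|\operatorname{tr}(u^{-1}A)|=d\big|\int\|uv\|^{p-2}\langle uv,Av\rangle\big|\le d\,a(u)^{p-1}a(A)=d\,a(A)$; (iii) for $p=1$ you correctly identify where the extra hypotheses enter, but the differentiation-under-the-integral and the argument that the critical-point equation still holds need the one-sided-derivative treatment spelled out (as the paper's appendix does), since $a$ is merely convex, not $C^1$, in that case. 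None of these are gaps in the approach — they are exactly the places where the paper's citation of Lewis plus subdifferential calculus silently takes care of the bookkeeping that your from-scratch version has to do explicitly.
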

The proof of the above can be found in the Appendix. 
The isotropic identity proves useful in conjunction with the continuous Brascamp--Lieb inequality. In our case, we require the following refinement of the one proved in \cite{Barthe2004} (see also \cite{Brazitikos-Giannopoulos_BL} for its approximate form).
\begin{theorem}[Continuous Brascamp--Lieb on $S$]\label{cont_brascamp_lieb}
Let $S$ be a hypersurface equipped with a positive measure $\mu$.
Let $v:S\to\mathbb{R}^n$ be measurable with $v(x)\neq0$ for $\mu$‑almost every $x\in S$, and assume the normalization
\begin{equation}\label{eq:identity_assumption}
I_d= \int_S \frac{v(x)\otimes v(x)}{\|v(x)\|_2^2}\,d\mu(x).
\end{equation}
Define the unit directions $u(x):=v(x)/\|v(x)\|_2\in S^{n-1}$ and suppose we are given a measurable family
of non-negative functions ${f_x:\mathbb R\to[0,\infty)},{x\in S}$ that meets the usual technical integrability
conditions.

Then the following inequality holds:
\begin{equation}\label{eq:conclusion_S}
\int_{\mathbb R^n}\exp\Big(\int_S\log\bigl(f_x(\langle y,u(x)\rangle)\bigr)\,d\mu(x)\Big)\,dy
\le
\exp\Big(\int_S\log\Big(\int_{\mathbb R} f_x(t)\,dt\Big)\, d\mu(x)\Big).
\end{equation}
\end{theorem}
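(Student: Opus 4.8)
The plan is to recognise \eqref{eq:conclusion_S} as the continuous (geometric) Brascamp--Lieb inequality and to prove it by heat--flow monotonicity. Put $u(x)=v(x)/\|v(x)\|_2$, which is defined for $\mu$-a.e.\ $x$ by hypothesis, so that the normalisation \eqref{eq:identity_assumption} reads $\int_S u(x)\otimes u(x)\,d\mu(x)=I_n$; tracing this identity forces $\mu(S)=n$, hence $\mu$ is finite and the exp-log integrals below make sense under the stated integrability hypotheses. After the usual reductions — discard the $\mu$-null set where $v=0$, restrict to $\{x:\,0<\int_{\mathbb R}f_x<\infty\}$, and normalise $\int_{\mathbb R}f_x=1$ (both sides of \eqref{eq:conclusion_S} scale by the same factor $\exp(\int_S\log\lambda_x\,d\mu)$ under $f_x\mapsto\lambda_x f_x$), and finally approximate each $f_x$ by a smooth, strictly positive, rapidly decaying density, measurably in $x$ — it suffices to prove $\int_{\mathbb R^n}\exp\!\big(\int_S\log f_x(\langle y,u(x)\rangle)\,d\mu(x)\big)\,dy\le 1$.

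For $t\ge0$ let $f_{x,t}:=f_x*\gamma_t$ solve the one-dimensional heat equation $\partial_t f_{x,t}=\partial_s^2 f_{x,t}$ with datum $f_x$, so $\int_{\mathbb R}f_{x,t}=1$ for all $t$, and set
\[
A(t):=\log\int_{\mathbb R^n}\exp\!\Big(\int_S\log f_{x,t}(\langle y,u(x)\rangle)\,d\mu(x)\Big)\,dy .
\]
The engine of the proof is the monotonicity $A'(t)\ge0$. Differentiating under the integral sign, substituting $\partial_t\log f_{x,t}=\partial_s^2 f_{x,t}/f_{x,t}$, integrating by parts in $y$ to transfer one derivative onto the Gaussian-type weight $\exp(\int_S\log f_{x,t}\,d\mu)$, and then invoking the tight-frame identity $\int_S u(x)\otimes u(x)\,d\mu(x)=I_n$ together with a Cauchy--Schwarz step, one finds that $A'(t)$ equals a non-negative quadratic expression in the scores $\partial_s\log f_{x,t}$ — this is precisely the Bennett--Carbery--Christ--Tao argument specialised to the geometric datum, and \eqref{eq:identity_assumption} is exactly what is needed to close the Cauchy--Schwarz step with constant $1$. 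Finally, as $t\to\infty$ the rescaled densities converge to the heat kernel, so $\exp(\int_S\log f_{x,t}(\langle y,u(x)\rangle)\,d\mu(x))$ tends (in the appropriate scaled sense) to a multiple of the heat kernel on $\mathbb R^n$ whose integral equals $1$, again by $\int_S u\otimes u\,d\mu=I_n$ and $\mu(S)=n$; hence $\lim_{t\to\infty}A(t)=0$. Combining, $A(0)\le A(\infty)=0$, which is the assertion.

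The main obstacle is technical rather than conceptual: justifying the differentiation of $A$ under the integral, ensuring the integration by parts produces no boundary term, and passing the $t\to\infty$ limit, all uniformly in the measurable parameter $x$; this is exactly the content hidden in ``the usual technical integrability conditions,'' and under, say, the hypothesis that $x\mapsto f_x$ is a measurable family of smooth strictly positive Schwartz densities every step is routine. A shorter alternative is to quote the continuous geometric Brascamp--Lieb inequality of Barthe \cite{Barthe2004} (see also \cite{Brazitikos-Giannopoulos_BL} for its approximate form): the substitution $u=v/\|v\|_2$ identifies \eqref{eq:identity_assumption} with Barthe's isotropy normalisation, and the algebraic heart of that result — Ball's determinant inequality $\det\big(\int_S c(x)\,u(x)\otimes u(x)\,d\mu(x)\big)\ge\exp\big(\int_S\log c(x)\,d\mu(x)\big)$ for $c\ge0$, valid precisely because of the tight-frame condition — together with the transportation argument built on it yields \eqref{eq:conclusion_S} directly.
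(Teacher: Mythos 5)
Your heat-flow proof is correct in outline and takes a genuinely different route from the paper's. The paper proves the statement by pushing the measure $\mu$ forward under $u$ to a measure $\nu$ on $S^{n-1}$, disintegrating $\mu$ along the fibres of $u$, defining the fibre-wise geometric averages $F_\omega(t)=\exp\bigl(\int_{u^{-1}(\{\omega\})}\log f_x(t)\,d\mu_\omega(x)\bigr)$, invoking the known continuous Brascamp--Lieb inequality on the sphere for $(\nu,\{F_\omega\})$, and then using Jensen's inequality on each fibre to bound $\int_{\mathbb R}F_\omega$ by the geometric mean of the $\int_{\mathbb R}f_x$; the point of the disintegration is precisely to collapse the possibly many-to-one map $u$ into a one-function-per-direction datum, which is what the black-box sphere version requires. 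Your heat-flow argument (Bennett--Carbery--Christ--Tao monotonicity of $A(t)$ under one-dimensional heat flow, with the Cauchy--Schwarz step closed by the tight-frame identity $\int_S u\otimes u\,d\mu=I_n$ and the limit $A(\infty)=0$ recovered from $\mu(S)=n$) works directly on $S$ and never needs the reduction to $S^{n-1}$, since the quadratic form in the monotonicity computation is indifferent to whether distinct $x$'s share a direction. What the heat-flow route buys is self-containedness and bypassing the disintegration machinery; what the paper's route buys is brevity, since it offloads all analysis to an existing theorem. One caveat on your ``shorter alternative'': quoting Barthe's continuous Brascamp--Lieb as stated on $S^{n-1}$ does not immediately give the lifted statement on a general parameter space $S$ with a non-injective direction map $u$ — that gap is exactly what the paper's disintegration-plus-Jensen step fills — so that alternative is only a genuine shortcut if you cite a formulation of the continuous inequality already phrased for an arbitrary measure space mapping into the sphere (or into rank-one projections). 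Your primary heat-flow argument, however, has no such gap.
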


Starting with an arbitrary $\mathbb{S}=(S,v,\mu)$, we find $u$ according to Proposition \ref{lewis_lemma} and let $w_1,\ldots, w_d$ be an orthonormal basis that renders $u$ a diagonal map. In other words, for all $i=1,\ldots, d$ we have 
$$u(w_i)=\lambda_iw_i,$$ for some $\lambda_i$. Moreover, we obtain a new vector field $z=u v$. The relation to the original field $v$ arises from the linear equivariance of $K(\mathbb{S}_v)$; one has $K(\mathbb{S}_z)=u^{-T}K(\mathbb{S}_v)$, hence
\[
\mathrm{vis}(\mathbb S)=|\det u|^{-1/d}\,\mathrm{vis}(\mathbb{S}_z).
\]
We can express the volume of $K(\mathbb{S}_z)$ using the formula $$d!|K(\mathbb{S}_z)|=\int_{\mathbb{R}^d}e^{-\|y\|_{K(\mathbb{S}_z)}}dx.$$
Set $d\mu(x):=d\|uv(x)\|_2\, d\sigma(x)$. The last integral can now be bounded using the continuous Brascamp--Lieb inequality on $S$ as Theorem \ref{cont_brascamp_lieb} for the function $f_x(t)=e^{-|t|/d}$ as follows:
    \begin{align*}
    \int_{\mathbb{R}^d}e^{-\|y\|_{K(\mathbb{S}_z)}}dy&=\int_{\mathbb{R}^d}\exp\bigg\{-\int_{S}|\langle y,uv(x)\rangle|d\sigma(x)\bigg\}dy\\ &=\int_{\mathbb{R}^d}\exp\bigg\{-\int_{S}\left|\left\langle y,\frac{uv(x)}{\|uv(x)\|_2}\right\rangle\right|\, \|uv(x)\|_2d\sigma(x)\bigg\}dy\\        &=\int_{\mathbb{R}^d}\exp\bigg\{\int_{S}\log f_x(\langle y,uv(x)\rangle)d\mu(x)\bigg\}dy\\
        &\le \exp\bigg\{\int_{S}\log\Big(\int_{\mathbb{R}}f_x(t)dt\Big)d\mu(x)\bigg\}\\
        &=\exp\bigg\{\int_{S}\log (2d)\, d\mu(x)\bigg\}\\
        &=(2d)^d,
    \end{align*}
since $$ d=\mathrm{tr}\left(I_d\right)=\int_{S}\mathrm{tr}\big( s(x)\otimes s(x)\big)d\mu(x)=\int_{S}\|s(x)\|_2^2\, d\mu(x)=\int_{S}1\, d\mu(x)=d\int_{S}\|uv(x)\|_2 \,  d\sigma(x).$$ Combining the above yields 
\begin{equation}\label{vis_z_bound}
\mathrm{vis}(\mathbb{S}_z)\geq\frac{(d!)^{1/d}}{2d}.
\end{equation}
Now, we need an upper bound for the product of $\sigma(E_i,\mathbb{S})$. First, we pass to the vector field $z$.
\begin{align*}
&\int_S\dots\int_S|P_{E_i}(v(x_1))\wedge\dots\wedge P_{E_i}(v(x_{k_i}))|d\sigma(x_1)\dots d\sigma(x_{k_i})=\\
&\frac{1}{\prod_{j\in\sigma_i}\lambda_j}\int_S\dots\int_S|P_{E_i}(uv(x_1))\wedge\dots\wedge P_{E_i}(uv(x_{k_i}))|d\sigma(x_1)\dots d\sigma(x_{k_i}).
\end{align*} 
Using the Cauchy-Schwarz inequality, we obtain 
\begin{align*}
&\frac{1}{\prod_{j\in\sigma_i}\lambda_j}\int_S\dots\int_S \Bigg|P_{E_i}\bigg(\frac{z(x_1)}{\|z(x_1)\|_2}\bigg)\wedge\dots\wedge P_{E_i}\bigg(\frac{z(x_{k_i})}{\|z(x_{k_i})\|_2}\bigg)\Bigg|\prod_{j=1}^{k_i}\|z(x_j)\|_2d\sigma(x_1)\dots d\sigma(x_{k_i})\\
   &\le\frac{1}{\prod_{j\in\sigma_i}\lambda_j}\Bigg[\int_S\dots\int_S\Bigg|P_{E_i}\bigg(\frac{z(x_1)}{\|z(x_1)\|_2}\bigg)\wedge\dots\wedge P_{E_i}\bigg(\frac{z(x_{k_i})}{\|z(x_{k_i})\|_2}\bigg)\Bigg|^2\prod_{j=1}^{k_i}\|z(x_j)\|_2d\sigma(x_1)\dots d\sigma(x_{k_i})\Bigg]^{1/2}\\
    &\times\Bigg[\int_S\dots\int_S\prod_{j=1}^{k_i}\|z(x_j)\|_2d\sigma(x_1)\dots d\sigma(x_{k_i})\Bigg]^{1/2}\\
   &=\frac{1}{\prod_{j\in\sigma_i}\lambda_j}\left(\frac{k_i!}{d^{k_i}}\right)^{1/2}.
\end{align*}
Therefore, using that $\sigma$ is an $1$-uniform cover and \eqref{vis_z_bound}  multiplication gives 
\begin{equation}\label{reverse_projections}
\frac{\sigma(E_1,\mathbb{S})\dots\sigma(E_m,\mathbb{S})}{|E_1\wedge\dots\wedge E_m|}\leq\frac{\displaystyle\prod_{i=1}^m \left(\frac{k_i!}{d^{k_i}}\right)^{1/2}}{{|w_1\wedge\dots\wedge w_d|\prod_{j\in [d]}\lambda_j}}=\frac{\displaystyle\prod_{i=1}^m \left(\frac{k_i!}{d^{k_i}}\right)^{1/2}}{|\det u|}\leq \frac{2^d\, d^{d/2}\, \mathrm{vis}(\mathbb{S})^d\displaystyle\prod_{i=1}^m \sqrt{k_i!}}{d!}.
\end{equation}
We therefore obtain the following theorem.
\begin{theorem}
Let $d_1+\dots+d_m=d$. Then,
\begin{equation*}
    c_d\inf_{\substack{E_j\in G_{d,d_j}}}\Bigg(\frac{\sigma(E_1,\mathbb{S})\dots\sigma(E_m,\mathbb{S})}{|E_1\wedge\dots\wedge E_m|}\Bigg)^{1/d}\le \mathrm{vis}(\mathbb{S})\le b_d\inf_{\substack{E_j\in G_{d,d_j}}}\Bigg(\frac{\sigma(E_1,\mathbb{S})\dots\sigma(E_m,\mathbb{S})}{|E_1\wedge\dots\wedge E_m|}\Bigg)^{1/d}
\end{equation*}
with $c_d=\displaystyle \frac{1}{2\sqrt{d}}\Big(\frac{d!}{\prod_{i=1}^{m}\sqrt{d_{i}!}}\Big)^{1/d}$ and $\, b_d=\displaystyle\frac{1}{2}\Big(\frac{d!}{\prod_{i=1}^{m}d_i!}\Big)^{1/d}$.
\end{theorem}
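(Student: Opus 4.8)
The plan is to establish the two inequalities separately, each by assembling pieces already developed above. For the \emph{upper bound} there is nothing new: one repeats verbatim the proof of the preceding visibility theorem. Starting from the reverse Santal\'o inequality for the zonoid $\Pi(\mathbb S)$ --- valid with the sharp constant $4^d/d!$ by Gordon--Meyer--Reisner --- one gets $\mathrm{vis}(\mathbb S)^d=|K(\mathbb S)|^{-1}\le \frac{d!}{4^d}|\Pi(\mathbb S)|$; then for any admissible configuration $E_j\in G_{d,d_j}$ with $E_1\oplus\cdots\oplus E_m=\mathbb R^d$ one chooses a basis $\{w_i\}$ whose restriction to each $\sigma_i$ is an orthonormal basis of $E_j$, so the affine Loomis--Whitney inequality reads $|\Pi(\mathbb S)|\le |E_1\wedge\cdots\wedge E_m|^{-1}\prod_i|P_{E_i}(\Pi(\mathbb S))|$; finally \eqref{sigma_proj} turns each $|P_{E_i}(\Pi(\mathbb S))|$ into $\frac{2^{d_i}}{d_i!}\sigma(E_i,\mathbb S)$. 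The powers of $2$ combine to $2^d/4^d=2^{-d}$, and taking $d$-th roots and then the infimum over $(E_j)$ gives the upper bound with $b_d=\tfrac12\bigl(d!/\prod_i d_i!\bigr)^{1/d}$.

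For the \emph{lower bound} the idea is to combine the two facts \eqref{vis_z_bound} and \eqref{reverse_projections} obtained above through the Lewis position. Recall the set-up: choose $u\in GL(d)$ as in Proposition~\ref{lewis_lemma} with $p=1$, so that $z:=uv$ satisfies the isotropic identity $\int_S \widehat z(x)\otimes\widehat z(x)\,d\mu(x)=I_d$ with $d\mu(x):=d\,\|uv(x)\|_2\,d\sigma(x)$; let $w_1,\dots,w_d$ be an orthonormal basis diagonalizing $u$, with $u(w_j)=\lambda_j w_j$, and set $E_i=\operatorname{span}\{w_j:j\in\sigma_i\}$ for a fixed $1$-uniform cover $(\sigma_1,\dots,\sigma_m)$ of $[d]$ with $|\sigma_i|=d_i$. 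Feeding the exponential representation $d!\,|K(\mathbb S_z)|=\int_{\mathbb R^d}e^{-\|y\|_{K(\mathbb S_z)}}\,dy$ into the continuous Brascamp--Lieb inequality (Theorem~\ref{cont_brascamp_lieb}) with $f_x(t)=e^{-|t|/d}$ yields $\mathrm{vis}(\mathbb S_z)\ge (d!)^{1/d}/(2d)$, which is \eqref{vis_z_bound}; and transferring $\sigma(E_i,\mathbb S)$ to the field $z$ (picking up the Jacobian $\prod_{j\in\sigma_i}\lambda_j$), applying Cauchy--Schwarz, and evaluating the resulting second-moment integral via the isotropic identity yields \eqref{reverse_projections}. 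It then remains only to take $d$-th roots in \eqref{reverse_projections}, solve for $\mathrm{vis}(\mathbb S)$, observe that the constant produced is exactly $c_d=\frac{1}{2\sqrt d}\bigl(d!/\prod_i\sqrt{d_i!}\bigr)^{1/d}$, and pass from this particular $(E_1,\dots,E_m)$ to the infimum over all admissible configurations (legitimate because the infimum is only smaller).

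I expect the genuinely substantive step --- carried out in the lines preceding the statement --- to be the evaluation of the second-moment integral in the proof of \eqref{reverse_projections}: one must recognize that the pushed-forward covariance $\int_S P_{E_i}\widehat z(x)\otimes P_{E_i}\widehat z(x)\,d\mu(x)=P_{E_i}I_dP_{E_i}$ is just the identity on the $d_i$-dimensional space $E_i$, whence the wedge-square of $d_i$ i.i.d.\ isotropic vectors integrates to $d_i!$, giving the factor $\bigl(d_i!/d^{d_i}\bigr)^{1/2}$ once the measure $\mu$ is accounted for. The remaining care is bookkeeping of normalisations: $\mu$ has total mass $d$ because $\int_S\|uv\|_2\,d\sigma=\tfrac1d\operatorname{tr}I_d=1$, which is the source of the $d^{d/2}$ and hence of the $\sqrt d$ in $c_d$; and the relation $\mathrm{vis}(\mathbb S)=|\det u|^{-1/d}\mathrm{vis}(\mathbb S_z)$ must be combined with \eqref{vis_z_bound} to convert the $1/|\det u|$ appearing after Cauchy--Schwarz into $2^dd^d\mathrm{vis}(\mathbb S)^d/d!$. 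Everything else is routine manipulation.
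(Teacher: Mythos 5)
Your proposal is correct and follows essentially the same route as the paper: the upper bound is the paper's earlier visibility theorem via reverse Santal\'o plus affine Loomis--Whitney, and the lower bound combines the Lewis position, the continuous Brascamp--Lieb estimate $\mathrm{vis}(\mathbb S_z)\ge (d!)^{1/d}/(2d)$, and the Cauchy--Schwarz argument evaluating the isotropic second-moment integrals to reach the bound in \eqref{reverse_projections}. Your bookkeeping of the normalization $\mu(S)=d$, the Jacobian factor $\prod_{j\in\sigma_i}\lambda_j$, and the conversion of $|\det u|^{-1}$ via $\mathrm{vis}(\mathbb S)=|\det u|^{-1/d}\mathrm{vis}(\mathbb S_z)$ all match the paper's computation.
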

\bigskip 

We close this section with a reverse Loomis--Whitney inequality for $\Pi(\mathbb{S})$ and, in general, for all zonoids. This was first obtained for all convex bodies $K$ (but when the dimension of subspaces is $n-1$) in \cite{Campi2018} and later the authors \cite{KoldobskySaroglouZvavitch2019} obtained an optimal dependence on the dimension.

We will work as before with the linear map $u$ and the induced orthonormal basis $w_i$. Having already an upper bound for the volume of projections, we only need a lower bound for the volume. 

\begin{align*}
        &|\det u|\, |\Pi(\mathbb{S})|=\frac{2^d}{d!}\int_S\dots\int_S|uv(x_1)\wedge\dots\wedge uv(x_d)|d\sigma(x_1)\dots d\sigma(x_d)\\
        &=\frac{2^d}{d!}\int_S\dots\int_S\Bigg|\frac{uv(x_1)}{\|uv(x_1)\|_2}\wedge\dots\wedge\frac{uv(x_d)}{\|uv(x_d)\|_2}\Bigg|\prod_{j=1}^d\|uv(x_j)\|_2d\sigma(x_1)\dots d\sigma(x_d)\\
        &\ge\frac{2^d}{d!}\int_S\dots\int_S\Bigg|\frac{uv(x_1)}{\|uv(x_1)\|_2}\wedge\dots\wedge\frac{uv(x_d)}{\|uv(x_d)\|_2}\Bigg|^2\prod_{j=1}^d\|uv(x_j)\|_2d\sigma(x_1)\dots d\sigma(x_d)\\
        &=\left(\frac{2}{d}\right)^d.
    \end{align*}
Using \eqref{sigma_proj} and \eqref{reverse_projections}, we obtain the following reverse Loomis--Whitney inequality. 
\begin{theorem}\label{reverse_loomis_zonoids}
    Let $\mathbb{S}=(S,\sigma,v)$ a generalised $d$-hypersurface. There exists an orthonormal basis $\{w_i\}_{i=1}^{d}$ of $\mathbb{R}^d$ such that for any $1$-uniform cover $(\sigma_1,\dots, \sigma_m)$ of $[d]$ and $E_i = span\{w_j : j\in\sigma_i\}$ we have
     \begin{equation}
    \prod_{j=1}^m|P_{E_j}(\Pi(\mathbb{S}))|\le\frac{(\sqrt{d})^{d}}{\prod_{j=1}^m\sqrt{d_j!}}|\Pi(\mathbb{S})|.
     \end{equation}
\end{theorem}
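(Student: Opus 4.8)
The plan is to combine the two one-sided estimates that have already been derived in the preceding discussion. The key observation is that Theorem~\ref{reverse_loomis_zonoids} is exactly the statement obtained by eliminating $\mathrm{vis}(\mathbb{S})$ between the upper bound for the projection product in \eqref{reverse_projections} and the lower bound for $|\Pi(\mathbb{S})|$ obtained just above via Cauchy--Schwarz. So the strategy is purely bookkeeping: start from the two displays, substitute, and simplify. The one genuinely new input that must be invoked is Proposition~\ref{lewis_lemma}, which supplies the linear map $u$ and, through it, the orthonormal eigenbasis $\{w_i\}_{i=1}^d$; the claimed basis in Theorem~\ref{reverse_loomis_zonoids} is precisely this one.

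First I would recall the setup: applying Proposition~\ref{lewis_lemma} (in the case $p=1$, using the stated extra hypotheses on $v$) we obtain $u\in GL(d)$ with the isotropic identity, and an orthonormal basis $w_1,\dots,w_d$ diagonalising $u$ with eigenvalues $\lambda_1,\dots,\lambda_d$, so that $|\det u|=\prod_{j=1}^d\lambda_j$ and $|w_1\wedge\dots\wedge w_d|=1$. Fix a $1$-uniform cover $(\sigma_1,\dots,\sigma_m)$ of $[d]$ with $|\sigma_i|=d_i$ and $E_i=\operatorname{span}\{w_j:j\in\sigma_i\}$. Next I would reproduce the Cauchy--Schwarz computation already displayed: writing $\|\det u\|\,|\Pi(\mathbb S)|$ via the wedge-product identity \eqref{wedge_product_identity} applied to the pushed-forward field $uv$, normalising each $uv(x_j)$ and using $|w|^2\le|w|$ together with the isotropic identity, one gets
\[
|\det u|\,|\Pi(\mathbb S)|\ \ge\ \Big(\frac{2}{d}\Big)^d.
\]
Then I would recall \eqref{sigma_proj}, namely $\sigma(E_i,\mathbb S)=\frac{d_i!}{2^{d_i}}|P_{E_i}(\Pi(\mathbb S))|$, and the bound \eqref{reverse_projections}, which after unwinding $\sigma(E_i,\mathbb S)$ reads
\[
\prod_{i=1}^m |P_{E_i}(\Pi(\mathbb S))|\ \le\ \frac{1}{|\det u|}\prod_{i=1}^m\Big(\frac{d_i!}{d^{d_i}}\Big)^{1/2}\ =\ \frac{1}{|\det u|}\,\frac{\prod_{i=1}^m\sqrt{d_i!}}{d^{d/2}},
\]
where I used $\sum_i d_i=d$ to combine the powers of $d$.

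Finally I would eliminate $|\det u|$. From the volume lower bound, $|\det u|\ge (2/d)^d/|\Pi(\mathbb S)|$, so $1/|\det u|\le (d/2)^d|\Pi(\mathbb S)|$. Substituting this into the projection bound gives
\[
\prod_{i=1}^m |P_{E_i}(\Pi(\mathbb S))|\ \le\ \Big(\frac{d}{2}\Big)^d\,\frac{\prod_{i=1}^m\sqrt{d_i!}}{d^{d/2}}\,|\Pi(\mathbb S)|,
\]
and a moment's simplification of the constant, $(d/2)^d\cdot d^{-d/2}=d^{d/2}/2^d$, almost matches the claimed $(\sqrt d)^d/\prod_j\sqrt{d_j!}$ — so I would double-check the exact placement of the factor $2^d$ and of $\prod\sqrt{d_i!}$ against \eqref{reverse_projections}, since the corollary as stated carries $\prod_j\sqrt{d_j!}$ in the denominator rather than the numerator. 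The main obstacle, such as it is, is exactly this constant-chasing: making sure the $2^{d_i}$'s coming from \eqref{sigma_proj}, the $\prod\sqrt{d_i!}$ from Cauchy--Schwarz, and the $(2/d)^d$ from the volume bound are assembled in the right order so that everything not depending on $\mathbb S$ collapses to $(\sqrt d)^d/\prod_{j=1}^m\sqrt{d_j!}$; there is no further conceptual difficulty, as both ingredient inequalities are already in hand.
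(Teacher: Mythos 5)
Your strategy is exactly the paper's: combine the Cauchy--Schwarz lower bound $|\det u|\,|\Pi(\mathbb S)|\ge(2/d)^d$ with the Lewis-position estimate \eqref{reverse_projections} on the product of projections, then eliminate $|\det u|$. The idea and the ingredients are right; the gap is the one you yourself flagged at the end, and it comes from a concrete bookkeeping slip in the ``unwinding'' step rather than from a missing idea.

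When you wrote
\[
\prod_{i=1}^m |P_{E_i}(\Pi(\mathbb S))|\ \le\ \frac{1}{|\det u|}\prod_{i=1}^m\Big(\frac{d_i!}{d^{d_i}}\Big)^{1/2},
\]
you effectively identified $\sigma(E_i,\mathbb S)$ with $|P_{E_i}(\Pi(\mathbb S))|$. But the identity \eqref{sigma_proj} reads $\sigma(E_i,\mathbb S)=\frac{d_i!}{2^{d_i}}|P_{E_i}(\Pi(\mathbb S))|$, so the correct unwinding is
\[
\prod_{i=1}^m |P_{E_i}(\Pi(\mathbb S))|
\;=\;\prod_{i=1}^m\frac{2^{d_i}}{d_i!}\,\sigma(E_i,\mathbb S)
\;\le\;\frac{2^d}{\prod_{i}d_i!}\cdot\frac{\prod_i\sqrt{d_i!}}{|\det u|\,d^{d/2}}
\;=\;\frac{2^d}{|\det u|\,d^{d/2}\,\prod_i\sqrt{d_i!}},
\]
using $\sum_i d_i=d$. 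Substituting $|\det u|^{-1}\le (d/2)^d\,|\Pi(\mathbb S)|$ now gives
\[
\prod_{i=1}^m |P_{E_i}(\Pi(\mathbb S))|\;\le\;\frac{2^d}{d^{d/2}\prod_i\sqrt{d_i!}}\cdot\Big(\frac{d}{2}\Big)^d\,|\Pi(\mathbb S)|\;=\;\frac{(\sqrt d)^d}{\prod_i\sqrt{d_i!}}\,|\Pi(\mathbb S)|,
\]
which is exactly the stated theorem. So the missing factor $\prod_i 2^{d_i}/d_i!$ is precisely what moves $\prod\sqrt{d_i!}$ from the numerator to the denominator and supplies the $2^d$ that cancels against $(2/d)^d$. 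With that correction your argument is the paper's proof verbatim.
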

If we take $S=S^{d-1}$, $v(x)=x$, and $\mu$ an isotropic measure on the sphere and if $Z$ is the zonoid generated from that measure, the above yields a reverse Loomis--Whitney inequality for all zonoids.
\begin{corollary}\label{zonoids_reverse}
 For every zonoid $Z\subseteq\mathbb{R}^d$  there exists an orthonormal basis $\{w_i\}_{i=1}^{d}$ of $\mathbb{R}^d$ such that for any $1$-uniform cover $(\sigma_1,\dots, \sigma_m)$ of $[d]$ and $E_i =\mathrm{span}\{w_j : j\in\sigma_i\}$ we have
    \begin{equation*}
        \prod_{i=1}^m|P_{E_i}(Z)|\le\frac{(\sqrt{d})^{d}}{\prod_{i=1}^m\sqrt{d_i!}}|Z|.
    \end{equation*}
    \end{corollary}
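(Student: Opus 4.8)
The plan is to deduce Corollary \ref{zonoids_reverse} directly from Theorem \ref{reverse_loomis_zonoids} by a standard representation argument for zonoids. First I would recall that every zonoid $Z\subseteq\mathbb{R}^d$ can be written (up to translation, which does not affect volumes of $Z$ or of its projections $P_{E_i}(Z)$) as a projection body: there is a finite even measure $\nu$ on $S^{d-1}$ with $h_Z(y)=\int_{S^{d-1}}|\langle y,x\rangle|\,d\nu(x)$. Choosing $S=S^{d-1}$, $v(x)=x$, and $\sigma=\nu$, we have $\Pi(\mathbb{S})=Z$, so the corollary is literally the special case of Theorem \ref{reverse_loomis_zonoids} for this hypersurface, once we know we may take the measure to be isotropic.

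The one point that needs care — and the place where the ``isotropic measure'' hypothesis mentioned in the statement enters — is that Theorem \ref{reverse_loomis_zonoids} is proved via the Lewis position of the field $v$, and the orthonormal basis $\{w_i\}$ it produces is the one diagonalizing the Lewis map $u$. To make the corollary self-contained I would instead apply the linear image $u$ to $Z$ at the outset: replace $Z$ by $\tilde Z := u^{-T}Z$, equivalently replace $\nu$ by the pushforward of $\|u x\|_2\,d\nu(x)$ under $x\mapsto ux/\|ux\|_2$, which by Proposition \ref{lewis_lemma} (case $p=1$) is an isotropic measure on $S^{d-1}$. Then $\tilde Z$ is generated by an isotropic measure, and the basis in Theorem \ref{reverse_loomis_zonoids} may be taken to be the standard basis. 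The reverse Loomis--Whitney inequality for $\tilde Z$ and the standard coordinate subspaces is then exactly the claimed inequality; pulling back through the invertible linear map $u$ shows it for $Z$ with the orthonormal basis $u^{-1}e_1/\|\cdot\|,\dots$ — but since $\det$ cancels on both sides of the inequality (the left side scales by $\prod_i|\det(u|_{E_i})|^{-1}$ and... ) I should be slightly more careful: projections do not transform as nicely under general $u$, so the cleanest route is simply to invoke Proposition \ref{lewis_lemma} to guarantee that \emph{every} zonoid has a linear image generated by an isotropic measure, and then apply Theorem \ref{reverse_loomis_zonoids} to that image, whose constant is affine-invariant in the precise form stated (volume ratios of projections onto the distinguished subspaces).

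Concretely, the steps in order are: (i) represent $Z$ as a (generalized) projection body $\Pi(\mathbb{S})$ with $S=S^{d-1}$, $v=\mathrm{id}$, $\sigma=\nu$; (ii) invoke Proposition \ref{lewis_lemma} with $p=1$ to find $u\in GL(d)$ so that the field $uv$ is in Lewis position, i.e. $\int_{S^{d-1}}\tfrac{ux\otimes ux}{\|ux\|_2^2}\,d\mu(x)=\tfrac1d I_d$ after the reweighting above — this is the isotropy needed; (iii) apply Theorem \ref{reverse_loomis_zonoids} to the hypersurface $(S^{d-1}, \|ux\|_2\,d\nu, ux/\|ux\|_2)$, whose projection body is a linear image $uZ$ of $Z$, obtaining $\prod_i |P_{E_i}(uZ)|\le \tfrac{(\sqrt d)^d}{\prod_i\sqrt{d_i!}}\,|uZ|$ for the distinguished basis $\{w_i\}$; (iv) transfer back: the inequality is preserved under $Z\mapsto uZ$ in the required sense because both the left and right sides are multiplied by the same Jacobian factors relative to the $E_i$-decomposition (this is exactly the mechanism already used in the proof of the preceding Theorem, where $|\det u|$ appears symmetrically on both sides). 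I expect the only genuine subtlety — hence the step to be most careful about — is item (iv), tracking how the product of projection volumes transforms under the non-orthogonal map $u$ and confirming the Jacobian factors cancel against those of $|uZ|$ so that the stated constant, which is independent of $u$, is indeed correct; everything else is bookkeeping, and the essential analytic content was already discharged in establishing Theorem \ref{reverse_loomis_zonoids}.
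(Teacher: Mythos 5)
Your overall plan — represent $Z$ as the projection body of a hypersurface supported on $S^{d-1}$ and invoke Theorem~\ref{reverse_loomis_zonoids} — is the right one, and this is essentially what the paper does. But you have created unnecessary work for yourself by misreading the scope of Theorem~\ref{reverse_loomis_zonoids}: that theorem is already stated for an \emph{arbitrary} generalised hypersurface $\mathbb{S}$, not only for isotropic ones. The Lewis map $u$, the passage to the isotropic field $z=uv$, and the relation of the projections of $\Pi(\mathbb S_z)$ back to those of $\Pi(\mathbb S)$ through the eigenvalues $\lambda_j$ of $u$ are all carried out \emph{inside} the proof of that theorem; the orthonormal basis asserted in its conclusion is precisely the one diagonalizing $u$. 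So the corollary is a one-line specialization: given a zonoid $Z$ with generating (even) measure $\nu$ on $S^{d-1}$, set $\mathbb S=(S^{d-1},\nu,\mathrm{id})$, note $\Pi(\mathbb S)=Z$, and apply the theorem. No transfer step is needed. (The paper's phrase ``$\mu$ an isotropic measure'' is a slight shorthand and should not be read as a hypothesis of Theorem~\ref{reverse_loomis_zonoids}.)

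Your detour through $\tilde Z=uZ$ is salvageable but you do not close the gap you yourself identify in step (iv). For completeness: since $u$ is diagonal in the basis $\{w_i\}$ and each $E_i$ is a coordinate subspace, $u$ commutes with $P_{E_i}$, so $|P_{E_i}(uZ)|=\bigl(\prod_{j\in\sigma_i}\lambda_j\bigr)\,|P_{E_i}(Z)|$; taking the product over $i$ and using that $(\sigma_1,\dots,\sigma_m)$ is a $1$-uniform cover gives a total factor $\prod_j\lambda_j=\det u$, which exactly cancels against $|uZ|=(\det u)|Z|$ on the right. So the transfer works — but this is precisely the bookkeeping already done in the proof of Theorem~\ref{reverse_loomis_zonoids} (compare the passage from $P_{E_i}(v)$ to $P_{E_i}(uv)$ in \eqref{reverse_projections}), so you are redoing it rather than reusing it. Finally, note an internal inconsistency in your writeup: you first set $\tilde Z:=u^{-T}Z$ but later (correctly) identify the projection body of the reweighted hypersurface as $uZ$; only the latter is consistent with $h_{\Pi(\tilde{\mathbb S})}(y)=\int|\langle u^Ty,x\rangle|\,d\nu(x)=h_{uZ}(y)$.
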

   \begin{remark} 
Let $K$ be a convex body in $\R^d$ and consider the projection body $\Pi(K)$ with support function $h_{\Pi(K)}(u)=|P_{u^{\perp}}(K)|=\frac{1}{2}\int_{S^{d-1}}|\langle \theta,u\rangle|\sigma_{K}(\theta)$ where $\sigma_{K}$ is the area measure of $K$. Then, Corollary \ref{zonoids_reverse}   provides an orthonormal basis $\{u_1,\dots,u_d\}$ of $\mathbb{R}^d$, such that 
    \begin{equation}
    \prod_{i=1}^d|P_{\langle u_i\rangle}(K)|\le(\sqrt{d})^d\, |\Pi(K)|.
    \end{equation}
    Noticing that $|P_{\langle u_i\rangle}(\Pi(K))|=2h_{\Pi(K)}(u_i)=2|P_{u_i^{\perp}}(K)|$ and using successively the Santal\'{o} inequality $|\Pi(K)||\Pi^*(K)|\le\omega_2^d$ and Zhang's inequality $\binom{2d}{d}/d^d\le|K|^{d-1}|\Pi^*(K)|$ we conclude that
    \begin{equation} \prod_{i=1}^d|P_{u_i^{\perp}}(K)|\le\frac{d^d\omega_d^2}{2^d\binom{2d}{d}}(\sqrt{d})^d|K|^{d-1}\le(c\sqrt{d})^d|K|^{d-1},
     \end{equation}
recovering the reverse Loomis--Whitney inequality from \cite{KoldobskySaroglouZvavitch2019}. 
\end{remark}
\subsection{Case  p=2.} In this subsection we will prove analogous results with the case $p=1$.
 Recall that the convex body $K^2(\mathbb{S})$ has the norm $$\|y\|_{K^2(\mathbb{S})}=\bigg(\int_S|\langle y,v(x)\rangle|^2d\sigma(x)\bigg)^{1/2},$$ which means it is an ellipsoid. In order to provide bounds for $\mathrm{vis}^2(\mathbb{S})$with respect to $\sigma(E,\cdot)$, where $E\in G_{d,k}$, we first establish a sharp Loomis--Whitney inequality and its reverse counterpart for ellipsoids. The subsequent theorem of \cite{GBBC} is essential for this endeavor.
\begin{theorem}
    Let $\{w_1,\dots,w_d\}$ be a basis of $\mathbb{R}^d$, let $m\ge1$ and let $(\sigma_1,\dots,\sigma_m)$ be a uniform cover of $[d]$ with weights $(p_1,\dots,p_m)$. Let $E_j=span\{w_k:k\in\sigma_j\}$ and $dim(E_j)=|\sigma_j|=d_j$ then for all non-negative integrable functions $f_j:E_j\longrightarrow\mathbb{R}$ we have
\begin{equation}    \int_{\mathbb{R}^d}\prod_{j=1}^mf_j\big(P_{E_j}(x)\big)dx\le \frac{\prod_{j=1}^m|\wedge_{k\in\sigma_j}w_k|_2^{p_j}}{|\wedge_{j=1}^dw_j|}\prod_{j=1}^m\bigg(\int_{E_j}f_j(x)dx\bigg)^{p_j}.
\end{equation}
Moreover, if $f$ is a log-concave function
    \begin{equation}\label{revers_log_conc}       \int_{\mathbb{R}^d}f^d(x)dx\ge\frac{1}{BL_2}\frac{\prod_{j=1}^md_j^{p_jd_j}}{d^d}\prod_{j=1}^m\Bigg(\int_{E_j}f^{d_j}(x)dx\Bigg)^{p_j}.
    \end{equation}
\end{theorem}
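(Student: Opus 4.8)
The plan is to read the two assertions as a Brascamp--Lieb inequality and its reverse (Barthe) counterpart, the latter specialised to log-concave $f$ by a simple rescaling.

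For the first inequality I would rewrite each cylinder function in Brascamp--Lieb form: fixing an orthonormal basis $\{v^j_1,\dots,v^j_{d_j}\}$ of $E_j$ and putting $B_jx=(\langle x,v^j_\ell\rangle)_{\ell=1}^{d_j}$ and $\phi_j(t)=f_j\!\big(\sum_\ell t_\ell v^j_\ell\big)$, one has $f_j(P_{E_j}x)=\phi_j(B_jx)$, $\int_{E_j}f_j=\int_{\mathbb{R}^{d_j}}\phi_j$, and $B_j^{*}B_j=P_{E_j}$. The claim then becomes exactly the Brascamp--Lieb inequality for the datum $\big((B_j)_{j},(p_j)_{j}\big)$. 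The uniform-cover hypothesis supplies the scaling identity $\sum_j p_jd_j=d$ and, because the $w_k$ span $\mathbb{R}^d$, the admissibility (dimension) inequalities, so the Brascamp--Lieb constant is finite; by Lieb's theorem it is attained on Gaussians, and an explicit Gaussian optimisation --- equivalently, transporting the datum to geometric position by linear changes of variable in $\mathbb{R}^d$ and in each $E_j$ and collecting the Jacobians --- identifies this constant with $BL_2$. As this is just the functional form of the affine Loomis--Whitney inequality already recorded for compact sets, I would dispatch it quickly.

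For the reverse inequality I would invoke Barthe's reverse Brascamp--Lieb inequality for the very same datum: for non-negative $g_j\in L^1(E_j)$,
\[
\int^{*}_{\mathbb{R}^d}\sup\Big\{\textstyle\prod_j g_j(z_j)^{p_j}:\ z_j\in E_j,\ \sum_j p_jz_j=x\Big\}\,dx\ \ge\ \frac{1}{BL_2}\prod_j\Big(\int_{E_j}g_j\Big)^{p_j},
\]
the sharp reverse constant for this Loomis--Whitney-type datum being $BL_2^{-1}$. Taking $g_j(z):=f\big(\tfrac{d}{d_j}z\big)^{d_j}$ gives $\int_{E_j}g_j=(d_j/d)^{d_j}\int_{E_j}f^{d_j}$, so (using $\sum_j p_jd_j=d$) the right-hand side equals $BL_2^{-1}d^{-d}\prod_j d_j^{p_jd_j}\,\prod_j\big(\int_{E_j}f^{d_j}\big)^{p_j}$. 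For the left-hand side write $f=e^{-V}$ with $V$ convex: if $\sum_j p_jz_j=x$ then $x=\sum_j\frac{p_jd_j}{d}\big(\frac{d}{d_j}z_j\big)$ is an honest convex combination (the weights $p_jd_j/d$ sum to $1$), hence $dV(x)\le\sum_j p_jd_j\,V\big(\frac{d}{d_j}z_j\big)$, i.e. $\prod_j g_j(z_j)^{p_j}=\exp\!\big(-\sum_j p_jd_j V(\tfrac{d}{d_j}z_j)\big)\le e^{-dV(x)}=f(x)^d$; the choice $z_j=\frac{d_j}{d}x$ shows the supremum equals $f(x)^d$ exactly, so the left-hand side is precisely $\int_{\mathbb{R}^d}f^d$. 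Combining the two sides yields the stated reverse inequality.

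The step I expect to be the real obstacle is the identification of the sharp constants. Finiteness of the Brascamp--Lieb constant is immediate from the uniform-cover condition, but showing that it equals $BL_2$ exactly --- and, dually, that Barthe's reverse constant for this datum is exactly $BL_2^{-1}$, which is what the reverse inequality requires --- needs either Lieb's Gaussian characterisation together with the explicit optimisation, or the reduction to geometric position with careful bookkeeping of the Jacobian factors in $\mathbb{R}^d$ and in the $E_j$. Once the sharp forward and reverse Brascamp--Lieb inequalities are in hand, the remaining ingredients --- the rescaling $g_j=f(\tfrac{d}{d_j}\cdot)^{d_j}$ and the one-line convexity estimate --- are entirely elementary.
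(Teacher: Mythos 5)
The paper states this theorem without proof, citing it from \cite{GBBC}, so there is no internal argument to compare against; I evaluate your proposal on its own merits.

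Your overall strategy — read the forward estimate as a Brascamp--Lieb inequality for the projection datum $(P_{E_j},p_j)$ and the reverse estimate as Barthe's dual inequality applied to the rescaled functions $g_j(z)=f(\tfrac{d}{d_j}z)^{d_j}$ — is the right one, and the rescaling computation together with the convexity estimate $dV(x)\le\sum_j p_jd_j\,V(\tfrac{d}{d_j}z_j)$ (using $\sum_j p_jd_j=d$) does produce exactly the stated constant $\tfrac{1}{BL_2}\tfrac{\prod_j d_j^{p_jd_j}}{d^d}$. Two points need correction or more care, though.

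First, your claim that the choice $z_j=\tfrac{d_j}{d}x$ shows the supremum \emph{equals} $f(x)^d$ is false in general: $\tfrac{d_j}{d}x$ lies in $E_j$ only when $x\in E_j$, so this is not an admissible competitor for generic $x$. Fortunately the argument does not need equality — the convexity step already gives the one-sided bound $\sup_{(z_j)}\prod_j g_j(z_j)^{p_j}\le f(x)^d$, which, combined with Barthe's lower bound on the sup-convolution integral, is all that is used. Delete the exactness claim and the chain of inequalities is intact.

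Second, the assertion that the forward inequality "is just the functional form of the affine Loomis--Whitney inequality already recorded for compact sets" and can be "dispatched quickly" inverts the logical dependence: the indicator-function case follows from the functional inequality, not the other way around. What you actually need is the identification of the Brascamp--Lieb constant of the datum $\bigl(P_{E_j},p_j\bigr)$ with $BL_2=\prod_j|\wedge_{k\in\sigma_j}w_k|^{p_j}/|\wedge_{k=1}^d w_k|$, and this is a genuine computation (Gaussian extremisers via Lieb's theorem, or a reduction to geometric position with the Jacobian bookkeeping you allude to). You correctly flag this as the substantive obstacle; it should be carried out rather than deferred, since once this constant is pinned down both the forward inequality and — via Barthe's reciprocity of forward and reverse constants — the constant $BL_2^{-1}$ in the reverse inequality follow. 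With these two repairs the proposal is a correct and self-contained proof of the cited result.
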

\bigskip
\begin{theorem}[Ellipsoids and Brascamp--Lieb]\label{ell_brascamp_lieb}
Let $\mathcal{E}\subset\mathbb{R}^d$ be an ellipsoid and $\{w_1,\dots,w_d\}$ a basis of $\mathbb{R}^d$. Let $m\ge1$ and let $(\sigma_1,\dots,\sigma_m)$ a uniform cover of $[d]$ with weights $(p_1,\dots,p_m)$. Let $F_j=span\{w_k:k\in\sigma_j\}$ and $dim(F_j)=|\sigma_j|=d_j$ let $\{P_{F_j}\}$ be orthogonal projections onto subspaces $F_j$ with $\dim F_j=d_j$.
Then
\begin{equation}\label{main-ell}
\frac{c_{\mathrm{ell}}}{BL_2}\prod_j |P_{F_j}\mathcal{E}|^{p_j}\le|\mathcal{E}| \le C_{\mathrm{ell}}BL_2\prod_j |P_{F_j}\mathcal{E}|^{p_j},
\end{equation}
where
\begin{equation}\label{Cell}
C_{\mathrm{ell}}\;=\;\frac{\omega_d}{\prod_j \omega_{d_j}^{\,p_j}} \qquad\text{and }\qquad\text{} c_{\mathrm{ell}}=\frac{\omega_d}{d^{d/2}}\prod_{j=1}^{m}\Bigg(\frac{d_j^{d_j/2}}{\omega_{d_j}}\Bigg)^{p_j}
\end{equation}
and $\omega_m=|B_2^m|=\pi^{m/2}/\Gamma(\tfrac m2+1)$ is the volume of the unit Euclidean ball in $\mathbb{R}^m$. Moreover, the constant $C_{\mathrm{ell}}$ is optimal (it cannot be decreased uniformly for all ellipsoids).
\end{theorem}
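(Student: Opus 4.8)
The plan is to convert both inequalities in \eqref{main-ell} into a single determinantal identity, exploiting the quadratic structure of $\mathcal{E}$. Write $\mathcal{E}=AB_2^d$ with $A\in GL(d)$ and set $Q:=AA^{T}\succ 0$, so $|\mathcal{E}|=\omega_d(\det Q)^{1/2}$. Since $P_{F_j}\mathcal{E}=(P_{F_j}A)(B_2^d)$ is the image of the unit ball under the surjection $P_{F_j}A\colon\mathbb{R}^d\to F_j$, one has $|P_{F_j}\mathcal{E}|=\omega_{d_j}\bigl(\det(P_{F_j}QP_{F_j}|_{F_j})\bigr)^{1/2}$. The first step is the elementary identity
\[
\det\bigl(P_{F_j}QP_{F_j}|_{F_j}\bigr)=\frac{\det M_{\sigma_j}}{\det G_{\sigma_j}},\qquad M:=W^{T}QW,\quad G:=W^{T}W,
\]
where $W=[\,w_1\,|\cdots|\,w_d\,]$ and $M_{\sigma_j},G_{\sigma_j}$ are the principal submatrices on $\sigma_j$ (so $\det G_{\sigma_j}=|\wedge_{k\in\sigma_j}w_k|^2$); one checks it by expressing the compression in the basis $\{w_k\}_{k\in\sigma_j}$ of $F_j$ and comparing Gram determinants. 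Plugging this, together with $\det M=(\det W)^2\det Q$ and $BL_2=\prod_j(\det G_{\sigma_j})^{p_j/2}/|\det W|$, into the two volume formulas gives the exact identity
\[
\frac{|\mathcal{E}|}{\prod_j|P_{F_j}\mathcal{E}|^{p_j}}=C_{\mathrm{ell}}\,BL_2\left(\frac{\det M}{\prod_j(\det M_{\sigma_j})^{p_j}}\right)^{1/2}.
\]

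From this identity the upper bound is precisely the generalised Hadamard--Fischer inequality $\det M\le\prod_j(\det M_{\sigma_j})^{p_j}$ for positive definite $M$ along a uniform cover of $[d]$ (equivalently, Shearer's entropy inequality applied to centred Gaussians, or the affine Loomis--Whitney function inequality recalled above fed with Gaussian weights). Optimality of $C_{\mathrm{ell}}$ is immediate from the identity: for $\mathcal{E}=B_2^d$ and an orthonormal $\{w_j\}$ one has $M=G=I$, $BL_2=1$, and both sides equal $\omega_d$, so the constant cannot be lowered. One should also observe that the naive approach — applying the function inequality to the indicators $\mathbf{1}_{P_{F_j}\mathcal{E}}$ — only yields the weaker constant $BL_2$, so the determinantal refinement is genuinely needed for the sharp $C_{\mathrm{ell}}$.

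For the lower bound no fixed basis works, because the ``Hadamard defect'' $\det M/\prod_j(\det M_{\sigma_j})^{p_j}$ can be pushed arbitrarily close to $0$ by elongating $\mathcal{E}$ in a direction oblique to the $w_j$; the correct choice is an orthonormal eigenbasis of $Q$, i.e.\ the principal axes of $\mathcal{E}$ (the analogue here of the Lewis position), for which $BL_2=1$, $M=W^{T}QW$ is diagonal, and the cover relation $\sum_j p_j\chi_{\sigma_j}(k)=1$ forces $\det M=\prod_j(\det M_{\sigma_j})^{p_j}$; the identity then gives $|\mathcal{E}|=C_{\mathrm{ell}}\prod_j|P_{F_j}\mathcal{E}|^{p_j}$, hence the asserted bound since $c_{\mathrm{ell}}\le C_{\mathrm{ell}}$. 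A second, more hands-on route, which reproduces exactly the stated $c_{\mathrm{ell}}$, is to apply the reverse log-concave inequality \eqref{revers_log_conc} to the log-concave function $e^{-\|x\|_{\mathcal{E}}}$ ($\mathcal{E}$ centred, harmlessly): using $\int_{\mathbb{R}^k}e^{-\|x\|_K}\,dx=k!\,|K|$ and the fact that on a principal-axis subspace $F_j$ the section $\mathcal{E}\cap F_j$ coincides with $P_{F_j}\mathcal{E}$, this gives $|\mathcal{E}|\ge(d!)^{-1}\prod_j(d_j!)^{p_j}\,|P_{F_j}\mathcal{E}|^{p_j}$, and one finishes with the Stirling-type comparison $\prod_j(d_j!)^{p_j}/d!\ge c_{\mathrm{ell}}$ (which, after taking logarithms, follows from the monotonicity of $n\mapsto\tfrac1n\log(n!\,\omega_n\,n^{-n/2})$ together with $d_j\le d$ and $\sum_j p_jd_j=d$). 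The main obstacle is exactly this lower-bound part — the necessity of the right basis and the section-versus-projection normalisation — the upper bound being just the identity plus a classical determinantal inequality.
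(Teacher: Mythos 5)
Your determinantal identity
\[
\frac{|\mathcal{E}|}{\prod_j|P_{F_j}\mathcal{E}|^{p_j}}=C_{\mathrm{ell}}\,BL_2\left(\frac{\det M}{\prod_j(\det M_{\sigma_j})^{p_j}}\right)^{1/2},\qquad M=W^{T}QW,
\]
is correct, and it is a cleaner reformulation than the paper's, which works directly with $\det(P_{F_j}MP_{F_j}^T)$. With this in hand your upper bound is mathematically the same step the paper takes: the generalised Hadamard--Fischer inequality $\det M\le\prod_j(\det M_{\sigma_j})^{p_j}$ is precisely what the paper obtains by feeding Gaussians into the affine Brascamp--Lieb inequality. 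The sharpness argument for $C_{\mathrm{ell}}$ also matches.

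Where you genuinely diverge is the lower bound, and you are right to do so: as written, with an \emph{arbitrary} basis $\{w_j\}$, the lower bound in \eqref{main-ell} is false, and your ``oblique elongation'' observation is a valid counterexample. (Concretely, in $\mathbb{R}^2$ with $w_j=e_j$, $p_1=p_2=1$, take $\mathcal{E}$ with semi-axes $R$ and $\varepsilon$ along $(1,\pm1)/\sqrt2$; then $|\mathcal{E}|=\pi R\varepsilon\to0$ while $|P_{e_1}\mathcal{E}|\,|P_{e_2}\mathcal{E}|\to 2R^2$, $BL_2=1$, so $\tfrac{c_{\mathrm{ell}}}{BL_2}\prod|P_{F_j}\mathcal{E}|\le|\mathcal{E}|$ fails.) The paper's derivation slips in the step ``which translates into'' \eqref{det_ineq_2}: the Gaussian integral $\int_{F_j}e^{-\pi\langle M^{-1}x,x\rangle}\,dx$ equals $\det\bigl((M^{-1})|_{F_j}\bigr)^{-1/2}$, which is the normalisation of the \emph{section} $\mathcal{E}\cap F_j$, not $\det(P_{F_j}MP_{F_j}^T)^{1/2}$, which is that of the \emph{projection}; the two agree only when each $F_j$ is an $M$-invariant subspace, i.e.\ exactly in the principal-axis case. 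Your two corrected routes for that special basis are both sound: the first gives the identity with $C_{\mathrm{ell}}$ (stronger than $c_{\mathrm{ell}}$, since $\prod_jd_j^{p_jd_j/2}\le d^{d/2}$ by $\sum_j p_jd_j=d$), and the second, via \eqref{revers_log_conc} applied to $e^{-\|x\|_{\mathcal{E}}}$ and the monotonicity of $n\mapsto\tfrac1n\log(n!\,\omega_n\,n^{-n/2})$, reproduces $c_{\mathrm{ell}}$ exactly. So your proposal is correct where the argument can be correct, and it pinpoints an error in the theorem as stated: the lower bound needs the hypothesis that $\{w_j\}$ are the principal axes of $\mathcal{E}$ (or, more generally, that each $F_j$ is $M$-invariant).
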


\begin{proof}
Write the ellipsoid in the quadratic form representation
\[
\mathcal{E}=\{x\in\mathbb{R}^d:\;x^T M^{-1} x\le 1\},
\]
with $M$ a symmetric positive definite $d\times d$ matrix. Choose $A$ such that $M=AA^T$. Then $\mathcal{E}=A(B_2^d)$ where $B_2^d$ is the unit Euclidean ball in $\mathbb{R}^d$. Consequently
\begin{equation}\label{vol-E}
|\mathcal{E}|=\omega_d\,\det(A)=\omega_d\,\sqrt{\det M}.
\end{equation}
For each projection $P_{F_j}$ the image $P_{F_j}\mathcal{E}$ equals $(P_{F_j}A)(B_2^d)$ (up to identification with $F_j$), hence
\begin{equation}\label{vol-proj}
|P_{F_j}\mathcal{E}|=\omega_{d_j}\,\det(P_{F_j}A)=\omega_{d_j}\,\sqrt{\det\big(P_{F_j} M P_{F_j}^T\big)}.
\end{equation}
Combine \eqref{vol-E} and \eqref{vol-proj} to write the ratio appearing in \eqref{main-ell}:
\[
\frac{|\mathcal{E}|}{\prod_j|P_{F_j}\mathcal{E}|^{p_j}} = \frac{\omega_d}{\prod_j \omega_{d_j}^{\,p_j}}\;\cdot\; \frac{(\det M)^{1/2}}{\prod_j\big(\det(P_{F_j} M P_{F_j}^T)\big)^{p_j/2}}.
\]
Thus to prove \eqref{main-ell} with the constant \eqref{Cell} it suffices to show
\begin{equation}\label{det-ineq}
\det M^{1/2} \le BL_2\prod_j \det\big(P_{F_j} M P_{F_j}^T\big)^{p_j/2}\qquad\text{for every }M>0.
\end{equation}
The determinant inequality \eqref{det-ineq} is precisely the Gaussian (matrix) formulation of the Brascamp--Lieb inequality in this orthogonal-projection setting. Equivalently, one may deduce \eqref{det-ineq} by evaluating the functional Brascamp--Lieb inequality on centered Gaussian functions. Concretely, for any symmetric positive definite covariance matrix $M$ consider the Gaussian density on $\mathbb{R}^d$ with covariance matrix $M$; evaluating the Brascamp--Lieb functional at the marginal Gaussians, using the matrix identity $\sum_j p_jP_{F_j}=I_d$ and the formula $\int_{\mathbb{R}^d}e^{-\pi\langle{M^{-1}x,x}\rangle}dx=\det(M)^{1/2}$ yields the inequality
\begin{equation*}
    \int_{\mathbb{R}^d} e^{-\pi\langle{M^{-1}x,x}\rangle}\,dx \le BL_2\prod_j \Bigg(\int_{F_j} e^{-\pi\langle{(P_{F_j} M P_{F_j}^T)^{-1}y,y}\rangle} dy\Bigg)^{p_j}.
\end{equation*}
Computing these Gaussian integrals explicitly gives
\begin{equation*}
\det M^{1/2} \le BL_2\prod_j \det\big(P_{F_j} M P_{F_j}^T\big)^{p_j/2}.
\end{equation*}
Inserting \eqref{det-ineq} into the ratio we obtain
\[
\frac{|\mathcal{E}|}{\prod_j|P_{F_j}\mathcal{E}|^{p_j}} \le \frac{\omega_d}{\prod_j \omega_{d_j}^{\,p_j}}BL_2,
\]
which is exactly \eqref{main-ell}. The constant $C_{\mathrm{ell}}$ is optimal because equality holds for Gaussian (and hence ellipsoidal) examples for which $M$ is a suitably aligned scalar multiple of the identity. In particular, if $A$ is a diagonal matrix with the coordinate axes aligned with the $F_j$'s in the natural way, then equality is achieved up to the normalization constants $\omega_m$.

For the left hand inequality, we notice that 
$$\langle M^{-1}x,x\rangle=\langle (AA^T)^{-1}x,x\rangle=\langle A^{-T}A^{-1}x,x\rangle=\langle A^{-1}x,A^{-1}x\rangle=\|A^{-1}x\|_2^2$$ therefore the function $e^{-\pi\langle M^{-1}x,x\rangle}$ is log-concave. Then, applying \eqref{revers_log_conc} yields
\begin{equation*}
    \int_{\mathbb{R}^d}e^{-\pi d\langle M^{-1}x,x\rangle}dx\ge\frac{1}{BL_2}\frac{\prod_{j=1}^md_j^{p_jd_j}}{d^d}\prod_{j=1}^m\Bigg(\int_{F_j}e^{-\pi d_j\langle M^{-1}x_j,x_j\rangle}dx_j\Bigg)^{p_j}
\end{equation*}

We make the change of variables $y=\sqrt{d}x$ and $y_j=\sqrt{d_j}x_j$ to the left-hand integral and the right-hand integrals respectively
\begin{equation*}
    \int_{\mathbb{R}^d}e^{-\pi\langle M^{-1}y,y\rangle}dy\ge\frac{1}{BL_2}\frac{\prod_{j=1}^md_j^{p_jd_j/2}}{d^{d/2}}\prod_{j=1}^m\Bigg(\int_{F_j}e^{-\pi\langle M^{-1}x_j,x_j\rangle}dx_j\Bigg)^{p_j},
\end{equation*}
which translates into
\begin{equation}\label{det_ineq_2}
    (\det M)^{1/2}\ge\frac{1}{BL_2}\frac{\prod_{j=1}^md_j^{p_jd_j/2}}{d^{d/2}}\prod_{j=1}^m\det(P_{F_j}MP_{F_j}^T)^{p_j/2}.
\end{equation}
Finally \eqref{vol-proj} implies 
\begin{equation*}
    |\mathcal E|\ge\frac{1}{BL_2}\frac{\omega_d}{d^{d/2}}\prod_{j=1}^{m}\Bigg(\frac{d_j^{d_j/2}}{\omega_{d_j}}\Bigg)^{p_j}\prod_{j=1}^{m}|P_{F_j}(\mathcal E)|^{p_j}.
\end{equation*}
\end{proof}
To express the aforementioned inequality in terms of the $\sigma$ parameters, we begin by representing the ellipsoid as a projection body. Consider an ellipsoid $\mathcal{E}=A(B_2^d)$. Apply the following properties of the projection body $\Pi$:
$$\Pi R(L)=|\det(R)|R^{-T}\Pi L,\ \text{for any}\ R\in GL_d,\quad \text{and}\quad \Pi (rL)=r^{d-1}\Pi L,\ \text{for any}\ r>0,$$
to obtain
\begin{equation}\label{ell-proj}
\mathcal{E}=\Pi\big(R(S^{d-1})\big),
\end{equation}
where $R$ is the linear transformation given by
\begin{equation}\label{lin-tran}
R=\Bigg(\frac{|\det(A)|}{2\omega_{d-1}}\Bigg)^{\frac{1}{d-1}}A^{-T}.
\end{equation}
A combination of the above yiels the following Theorem.
\begin{theorem}
    Let $\mathbb{S}=(S,v,\sigma)$ a $d$-hypersurface and $\{w_1,\dots,w_d\}$ a basis of $\mathbb{R}^d$. Let $m\ge1$ and let $(\sigma_1,\dots,\sigma_m)$ a uniform cover of $[d]$ with weights $(p_1,\dots,p_m)$. Let $F_j=span\{w_k:k\in\sigma_j\}$ and $dim(F_j)=|\sigma_j|=d_j$. We can find a $d$-hypersurface $\mathbb{G}$ related to $\mathbb{S}$ such that,
    \begin{equation}\label{doub-ineq}
        \frac{c_1}{BL_2^{1/d}}\prod_{j=1}^m\sigma(F_j,\mathbb{G})^{p_j/d}\le \mathrm{vis}^2(\mathbb{S})\le c_2BL_2^{1/d}\prod_{j=1}^m\sigma(F_j,\mathbb{G})^{p_j/d},
    \end{equation}
where
    \begin{equation}
        c_1=\frac{1}{\omega_d^{1/d}\sqrt{d}}\prod_{j=1}^m\bigg(\frac{2^{d_j}d_j^{d_j/2}}{\omega_{d_j}d_j!}\bigg)^{p_j/d} \ \ \text{and} \ \ \ \ c_2=\frac{1}{\omega_d^{1/d}}\prod_{j=1}^m\bigg(\frac{2^{d_j}}{\omega_{d_j}d_j!}\bigg)^{p_j/d}.
    \end{equation}
    \end{theorem}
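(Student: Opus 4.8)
The plan is to combine the ellipsoid Brascamp--Lieb inequality \eqref{main-ell} with the representation of an ellipsoid as a projection body \eqref{ell-proj}, translating the volume inequalities for $\mathcal{E}=K^2(\mathbb{S})$ into inequalities for the quantities $\sigma(F_j,\cdot)$ via the identity \eqref{sigma_proj}. The key observation is that $K^2(\mathbb{S})$ is an ellipsoid, say $K^2(\mathbb{S})=A(B_2^d)$ for some $A\in GL_d$, so that $\mathrm{vis}^2(\mathbb{S})=|K^2(\mathbb{S})|^{-1/d}=(\omega_d\,|\det A|)^{-1/d}$, and its polar $\Pi^2:=K^2(\mathbb{S})^\circ$ is again an ellipsoid. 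Define the hypersurface $\mathbb{G}=(S^{d-1},\mu,\,x\mapsto x)$ where $\mu$ is the (isotropic-type) measure on $S^{d-1}$ for which $\Pi(\mathbb{G})=K^2(\mathbb{S})$; concretely, by \eqref{ell-proj}--\eqref{lin-tran} one takes $\mathbb{G}$ to be the pushforward of the uniform measure on $S^{d-1}$ under the linear map $R=\big(|\det A|/(2\omega_{d-1})\big)^{1/(d-1)}A^{-T}$, so that $\Pi\big(R(S^{d-1})\big)=K^2(\mathbb{S})=\mathcal{E}$.

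First I would apply Theorem \ref{ell_brascamp_lieb} to $\mathcal{E}=K^2(\mathbb{S})$ with the given uniform cover, obtaining
\[
\frac{c_{\mathrm{ell}}}{BL_2}\prod_{j=1}^m |P_{F_j}\mathcal{E}|^{p_j}\;\le\;|\mathcal{E}|\;\le\;C_{\mathrm{ell}}\,BL_2\prod_{j=1}^m |P_{F_j}\mathcal{E}|^{p_j}.
\]
Next I would rewrite each projection volume $|P_{F_j}\mathcal{E}|=|P_{F_j}\Pi(\mathbb{G})|$. Since the subspaces $F_j$ are spanned by coordinate-type vectors $w_k$, the commutation relation $P_{F_j}\Pi(\mathbb{G})=\Pi_{F_j}\big(P_{F_j}(\mathbb{G})\big)$ (projection of a projection body is the projection body of the projected hypersurface, up to the standard identification) together with identity \eqref{sigma_proj} gives
\[
|P_{F_j}(\Pi(\mathbb{G}))|=\frac{2^{d_j}}{d_j!}\,\sigma(F_j,\mathbb{G}).
\]
Substituting this and $|\mathcal{E}|=\mathrm{vis}^2(\mathbb{S})^{-d}$ into the two-sided inequality, raising to the power $1/d$, and collecting the constants $C_{\mathrm{ell}},c_{\mathrm{ell}}$ from \eqref{Cell} together with the factors $2^{d_j}/d_j!$, yields exactly \eqref{doub-ineq} with the stated $c_1,c_2$ (the $BL_2^{1/d}$ absorbing the basis-dependent factor, and the $\omega_d,\omega_{d_j}$ powers recombining as written). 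One should double check the bookkeeping: $C_{\mathrm{ell}}=\omega_d/\prod_j\omega_{d_j}^{p_j}$ contributes the $\omega_d^{-1/d}\prod_j(1/\omega_{d_j})^{p_j/d}$ in $c_2$, the $2^{d_j}/d_j!$ from \eqref{sigma_proj} contributes $\prod_j(2^{d_j}/d_j!)^{p_j/d}$, and for the lower bound the extra $d_j^{d_j/2}/d^{d/2}$ in $c_{\mathrm{ell}}$ supplies the $d^{-1/2}\prod_j d_j^{d_j p_j/(2d)}$ appearing in $c_1$.

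The main obstacle I anticipate is purely organizational rather than conceptual: one must verify that projections of the auxiliary hypersurface $\mathbb{G}$ interact correctly with \eqref{sigma_proj}, i.e.\ that $\sigma(F_j,\mathbb{G})$ as defined (a $d$-fold integral of a wedge against $F_j^\perp$) indeed equals $\tfrac{d_j!}{2^{d_j}}|P_{F_j}\Pi(\mathbb{G})|$ for \emph{this} choice of $\mathbb{G}$, and to be careful that the uniform cover weights $p_j$ are transported unchanged through all the exponentiations. A secondary point worth stating explicitly is that the construction of $\mathbb{G}$ via \eqref{ell-proj}--\eqref{lin-tran} requires $\mathcal{E}=K^2(\mathbb{S})$ to have nonempty interior, which holds under the mild nondegeneracy hypothesis on $\mathbb{S}$ (ess.\ span of $v$ equals $\mathbb{R}^d$); otherwise $\mathrm{vis}^2(\mathbb{S})=0$ and the statement is vacuous. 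No new inequality is needed beyond Theorem \ref{ell_brascamp_lieb} and the identity \eqref{sigma_proj}, so once the translation dictionary is set up the proof is a direct substitution.
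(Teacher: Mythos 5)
There is a genuine gap in your argument, and it lies precisely in the choice of the auxiliary hypersurface $\mathbb{G}$. You define $\mathbb{G}$ so that $\Pi(\mathbb{G})=K^2(\mathbb{S})=\mathcal{E}$ and then substitute $|\mathcal{E}|=\mathrm{vis}^2(\mathbb{S})^{-d}$ into the ellipsoid Brascamp--Lieb inequality \eqref{main-ell}. But this places $\mathrm{vis}^2(\mathbb{S})^{-d}$, a \emph{negative} power of $\mathrm{vis}^2$, on the middle of the two-sided bound, while the $\sigma(F_j,\mathbb{G})$ appear on the outside with positive exponents $p_j$. Inverting to solve for $\mathrm{vis}^2(\mathbb{S})$ then reverses the inequalities and flips the signs on the exponents, producing $\prod_j\sigma(F_j,\mathbb{G})^{-p_j/d}$ rather than the asserted $\prod_j\sigma(F_j,\mathbb{G})^{+p_j/d}$. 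No amount of bookkeeping fixes this; the form of the conclusion is simply wrong under your choice of $\mathbb{G}$.

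The missing idea is to pass to the polar before constructing $\mathbb{G}$. Because $K^2(\mathbb{S})$ is an ellipsoid, the Santal\'o identity is an \emph{equality}: $|K^2(\mathbb{S})|\,|K^2(\mathbb{S})^\circ|=\omega_d^2$. Hence
\[
\mathrm{vis}^2(\mathbb{S})^d=\frac{1}{|K^2(\mathbb{S})|}=\frac{1}{\omega_d^2}\,\bigl|K^2(\mathbb{S})^\circ\bigr|,
\]
which is a \emph{positive} multiple of the volume of an ellipsoid. The paper therefore chooses $\mathbb{G}$ so that $\Pi(\mathbb{G})=K^2(\mathbb{S})^\circ$ (concretely, $\mathbb{G}=(S^{d-1},\mathrm{id},\nu)$ with $d\nu$ the generating measure of the zonoid $K^2(\mathbb{S})^\circ$, given by $\tfrac{d|K^2(\mathbb{S})|}{2\omega_{d-1}}h_{K^2(\mathbb{S})}^{-(d+1)}d\sigma_{S^{d-1}}$, as computed in the Remark). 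Applying Theorem~\ref{ell_brascamp_lieb} to $\Pi(\mathbb{G})=K^2(\mathbb{S})^\circ$ and using $|P_{F_j}\Pi(\mathbb{G})|=\tfrac{2^{d_j}}{d_j!}\sigma(F_j,\mathbb{G})$ from \eqref{sigma_proj} then produces the two-sided bound with the correct signs, and the extra $\omega_d^{-2/d}$ from the Santal\'o identity combines with $C_{\mathrm{ell}}^{1/d}=\omega_d^{1/d}\prod_j\omega_{d_j}^{-p_j/d}$ to yield exactly the $\omega_d^{-1/d}$ factor in $c_1,c_2$. You did note that $K^2(\mathbb{S})^\circ$ is an ellipsoid and even cited \eqref{ell-proj}--\eqref{lin-tran}, so the ingredients were in view, but you applied them to $K^2(\mathbb{S})$ instead of its polar, which is where the argument breaks.
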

    \begin{proof}
Since $K^2(\mathbb{S})$ is an ellipsoid     \begin{equation}\label{vis_ellips_1}\mathrm{vis}^2(\mathbb{S})^d=\frac{1}{|K^2(\mathbb{S})|}=\frac{1}{\omega_d^2}|K^2(\mathbb{S})^{\circ}|.
    \end{equation}
Moreover, there is $T\in GL_d$ such that $K^2(\mathbb{S})=T(B_2^d)$. Define $R\in GL_d$ as in \eqref{lin-tran} to write $K^2(\mathbb{S})^{\circ}=\Pi\big(R(S^{d-1})\big)$.
Consider the vector field $v(x)=x$ and the  measure $\nu$ by $$d\nu=\frac{d|K^2(\mathbb{S})|}{2\omega_{d-1}}h_{K^2(\mathbb{S})}^{-(d+1)}d\sigma_{S^{d-1}}.$$ It is classical that this is the generating measure of $K^2(\mathbb{S})^{\circ}$ as a zonoid where $\sigma_{S^{d-1}}$ is the normalized cone measure on $S^{d-1}$. Define the $d$-hypersurface $\mathbb{G}=(S^{d-1},id,\nu)$.
From this point of view it is immediate that  
    \begin{equation}
        K^2(\mathbb{S})^{\circ}=\Pi(\mathbb{G}) \ \ \text{and} \ \ |P_E(\Pi(\mathbb{G}))|=\frac{2^k}{k!}\sigma(E,\mathbb{G}), \ \ E\in G_{d,k}   
    \end{equation}
and \eqref{doub-ineq} follows from  $\eqref{main-ell}$ for $\Pi(\mathbb{G})$.
    \end{proof}

\begin{remark}
For the sake of completeness we compute the measure $\nu$. Let $\mathcal{E}$ be an ellipsoid. We wish to find  the generating measure of
$\mathcal{E}$, as a zonoid. We calculate the integral    $$\int_{\mathcal{E}}|\langle x,y\rangle|dy$$
in two ways.
First, we make a change to polar coordinates
\begin{align*}
    &\int_{\mathcal{E}}|\langle x,y\rangle|dy=d\omega_d\int_{S^{d-1}}\int_0^{1/\|\theta\|_{\mathcal{E}}}r^{d-1}|\langle x,r\theta\rangle|drd\sigma_{S^{d-1}}(\theta)\\
    &=d\omega_d\int_{S^{d-1}}|\langle x,\theta\rangle|\bigg(\int_0^{1/\|\theta\|_{\mathcal{E}}}r^ddr\bigg)d\sigma_{S^{d-1}}(\theta)=\frac{d\omega_d}{d+1}\int_{S^{d-1}}|\langle x,\theta\rangle|\|\theta\|_{\mathcal{E}}^{-(d+1)}\sigma_{S^{d-1}}(\theta).
\end{align*}
Next, if $E=A(B_2^d)$ with $A\in GL_d$, then  $|\det A|=\frac{|\mathcal{E}|}{\omega_d}$ and
$h_{\mathcal{E}}(x)=\|A^Tx\|_2$. 
Using the rotational invariance 
\begin{align*}
\int_{\mathcal{E}}|\langle x,y\rangle|dy=|\det A|\int_{B_2^d}|\langle x,Ay\rangle|dy
    &=|\det A|\int_{B_2^d}|\langle A^Tx,y\rangle|dy \\
    &=\frac{2\omega_{d-1}}{d+1}|\det A|\|A^Tx\|_2\\&=\frac{2\omega_{d-1}}{d+1}\frac{|\mathcal{E}|}{\omega_d}h_{\mathcal{E}}(x).
\end{align*}
Combining the two calculations, we conclude that
$$h_{\mathcal{E}}(x)=\int_{S^{d-1}}|\langle x,\theta\rangle|d\nu(\theta)$$
with
$$d\nu(\theta)=\frac{d\omega_d^2}{2\omega_{d-1}|\mathcal{E}|}d\sigma_{S^{d-1}}(\theta).$$
\end{remark}
\bigskip

\noindent In what follows, we derive lower and upper bounds for $\mathrm{vis}^2(\mathbb{S})$ by employing the corresponding $\sigma$ quantities.

For $E\in G_{d,k}$ and the $d-$hypersurface $\mathbb{S}=(S,\sigma,v)$ define
\begin{align*}
 \sigma_2(E,\mathbb{S}):&=\bigg(\int_S|E^{\perp}\wedge v(x_1)\wedge\dots\wedge v_k|^2d\sigma(x_1)\dots d\sigma(x_k)\bigg)^{1/2}\\
    &=\bigg(\int_S|P_E(v(x_1))\wedge\dots\wedge P_E(v_k)|^2d\sigma(x_1)\dots d\sigma(x_k)\bigg)^{1/2}.
\end{align*}
For the covariance matrix $T_{\mathbb{S}}$ of $\mathbb{S}$ we have
$$T_{\mathbb{S}}=\int_Sv(x)\otimes v(x)d\sigma(x)$$ and we know that
\begin{equation}\label{T_S_1}
    d!\det(T_{\mathbb{S}})=\int_S\dots\int_S|v(x_1)\wedge\dots\wedge v(x_d)|^2d\sigma(x_1)\dots d\sigma(x_d)=Q_d^2(\mathbb{S)}^{2d}
\end{equation}
Since
$$P_ET_{\mathbb{S}}P_E=\int_SP_E(v(x))\otimes P_E(v(x))d\sigma(x)$$
using \eqref{T_S_1} implies
\begin{equation}
\sigma_2(E,\mathbb{S})=\sqrt{k!\det(P_ET_{\mathbb{S}}P_E)}.
\end{equation}
\begin{proposition}\label{vis_2_prop}
     Let $\mathbb{S}=(S,\sigma,v)$ a generalised d-hypersurface and$\{w_1,\dots,w_d\}$ a basis of $\mathbb{R}^d$, let $m\ge1$ and let $(\sigma_1,\dots,\sigma_m)$ a uniform cover of $[d]$ with weights $(p_1,\dots,p_m)$. Let $E_j=span\{w_k:k\in\sigma_j\}$ and $dim(E_j)=|\sigma_j|=d_j$
     
     \begin{equation*}
         \sqrt{\frac{d!}{d^d}}\prod_{i=1}^m\bigg(\frac{d_i^{d_i}}{d_i!}\bigg)^{p_i/2}\frac{1}{BL_2}\prod_{i=1}^m\sigma_2(E_i,\mathbb{S})^{p_i}\le Q_d^2(\mathbb{S})^d\le\sqrt{d!}\prod_{i=1}^m\bigg(\frac{1}{d_i!}\bigg)^{p_i/2}BL_2\prod_{i=1}^m\sigma_2(E_i,\mathbb{S})^{p_i}
     \end{equation*}
\end{proposition}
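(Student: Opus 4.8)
The plan is to reduce the proposition, via the covariance ellipsoid of $\mathbb{S}$, to the two determinant inequalities \eqref{det-ineq} and \eqref{det_ineq_2} established inside the proof of Theorem~\ref{ell_brascamp_lieb}. Concretely, let $\mathcal{E}:=T_{\mathbb{S}}^{1/2}(B_2^d)$, the ellipsoid associated with the positive semidefinite matrix $M:=T_{\mathbb{S}}$. By \eqref{vol-E} and \eqref{vol-proj}, and using that each $P_{E_j}$ is an orthogonal projection, one has $|\mathcal{E}|=\omega_d(\det T_{\mathbb{S}})^{1/2}$ and $|P_{E_j}\mathcal{E}|=\omega_{d_j}(\det(P_{E_j}T_{\mathbb{S}}P_{E_j}))^{1/2}$. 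Comparing these with the identity $Q_d^2(\mathbb{S})^d=\sqrt{d!\det T_{\mathbb{S}}}$ from \eqref{T_S_1} and with the formula $\sigma_2(E_j,\mathbb{S})=\sqrt{d_j!\,\det(P_{E_j}T_{\mathbb{S}}P_{E_j})}$ recorded just before the statement yields the dictionary
\[
|\mathcal{E}|=\frac{\omega_d}{\sqrt{d!}}\,Q_d^2(\mathbb{S})^d,\qquad |P_{E_j}\mathcal{E}|=\frac{\omega_{d_j}}{\sqrt{d_j!}}\,\sigma_2(E_j,\mathbb{S}).
\]

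Next I would apply Theorem~\ref{ell_brascamp_lieb} with $F_j=E_j$, so that \eqref{main-ell} reads
\[
\frac{c_{\mathrm{ell}}}{BL_2}\prod_j|P_{E_j}\mathcal{E}|^{p_j}\;\le\;|\mathcal{E}|\;\le\;C_{\mathrm{ell}}\,BL_2\prod_j|P_{E_j}\mathcal{E}|^{p_j},
\]
with $C_{\mathrm{ell}}=\omega_d/\prod_j\omega_{d_j}^{p_j}$ and $c_{\mathrm{ell}}=\frac{\omega_d}{d^{d/2}}\prod_j(d_j^{d_j/2}/\omega_{d_j})^{p_j}$. Substituting the dictionary, the factors $\omega_d$ and $\omega_{d_j}^{p_j}$ cancel on both sides; the leftover powers of $d_j!$ and $d^d$ recombine into $\sqrt{d!}\prod_j(d_j!)^{-p_j/2}$ for the upper bound and into $\sqrt{d!/d^d}\prod_j(d_j^{d_j}/d_j!)^{p_j/2}$ for the lower bound, which is exactly the asserted chain. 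This step is pure bookkeeping once the dictionary is in place, and it can equivalently be done by applying \eqref{det-ineq} and \eqref{det_ineq_2} directly to $M=T_{\mathbb{S}}$.

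The only point that needs actual care is the degenerate case $\det T_{\mathbb{S}}=0$, where $\mathcal{E}$ is lower-dimensional and Theorem~\ref{ell_brascamp_lieb} (stated for $M>0$) does not apply verbatim. There $Q_d^2(\mathbb{S})=0$, so the upper bound is trivial since its right-hand side is nonnegative; for the lower bound one applies \eqref{det_ineq_2} to $M_t:=T_{\mathbb{S}}+tI_d>0$ and lets $t\to0^+$. Since determinants are continuous, $\det(M_t)^{1/2}\to0$ forces $\prod_j\det(P_{E_j}T_{\mathbb{S}}P_{E_j})^{p_j/2}=0$, hence $\prod_j\sigma_2(E_j,\mathbb{S})^{p_j}=0$, and the lower bound holds with both sides equal to zero. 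I expect this limiting argument, rather than the constant-chasing, to be the only place requiring a small amount of attention.
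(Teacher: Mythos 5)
Your proposal is correct and follows essentially the same route as the paper, which simply applies the determinant inequalities \eqref{det-ineq} and \eqref{det_ineq_2} to $M=T_{\mathbb{S}}$ and uses the formulas $Q_d^2(\mathbb{S})^{2d}=d!\det T_{\mathbb{S}}$ and $\sigma_2(E_j,\mathbb{S})^2=d_j!\det(P_{E_j}T_{\mathbb{S}}P_{E_j})$; your ellipsoid ``dictionary'' and your remark that one can pass directly through \eqref{det-ineq}/\eqref{det_ineq_2} are two phrasings of the same computation. Your explicit treatment of the degenerate case $\det T_{\mathbb{S}}=0$ (via $M_t=T_{\mathbb{S}}+tI_d$) is a small but worthwhile addition that the paper leaves implicit.
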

\begin{proof}
Using the above representation for $\sigma_2$, Proposition follows from \eqref{det-ineq} and \eqref{det_ineq_2}. 
\end{proof}
\begin{theorem}
  Let $\mathbb{S}=(S,\sigma,v)$ be a generalised $d$-hypersurface. Then,
    \begin{equation}
 \mathrm{vis}^2(\mathbb{S})=\bigg(\frac{1}{\omega_d}\prod_{i=1}^d\sigma_2(e_i,\mathbb{S})\bigg)^{1/d}
    \end{equation}
    where $\{e_1,\dots,e_d\}$ is the usual orthonormal basis of $\mathbb{R}^d$. Moreover,
    \begin{equation}
        \frac{1}{(\sqrt{d!\omega_d})^{1/d}}Q_d^2(\mathbb{S})\le \mathrm{vis}^2(\mathbb{S})\le\frac{\sqrt{d}}{(\sqrt{d!\omega_d})^{1/d}}Q_d^2(\mathbb{S})
    \end{equation}
\end{theorem}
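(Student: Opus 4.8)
The plan is to identify $K^2(\mathbb{S})$ with the ellipsoid attached to the covariance matrix $T_{\mathbb{S}}=\int_S v(x)\otimes v(x)\,d\sigma(x)$ and to read every quantity off $\det T_{\mathbb{S}}$. Since $\|y\|_{K^2(\mathbb{S})}^2=\int_S\langle y,v(x)\rangle^2\,d\sigma(x)=\langle T_{\mathbb{S}}y,y\rangle$, we have $K^2(\mathbb{S})=\{y:\langle T_{\mathbb{S}}y,y\rangle\le1\}=T_{\mathbb{S}}^{-1/2}(B_2^d)$, hence
\[
|K^2(\mathbb{S})|=\omega_d\,(\det T_{\mathbb{S}})^{-1/2},\qquad
\omega_d\,\mathrm{vis}^2(\mathbb{S})^d=(\det T_{\mathbb{S}})^{1/2}.
\]
If $T_{\mathbb{S}}$ is singular both sides of every assertion in the theorem vanish, so we may assume $T_{\mathbb{S}}>0$.

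For the first identity I would pass to an orthonormal basis $\{e_1,\dots,e_d\}$ of eigenvectors of $T_{\mathbb{S}}$ — equivalently, the principal axes of the ellipsoid $K^2(\mathbb{S})$; this is harmless, since rotating the vector field leaves $\mathrm{vis}^2$ unchanged and merely relabels the quantities $\sigma_2(\cdot,\mathbb{S})$. In this basis $T_{\mathbb{S}}$ is diagonal, so the identity $\sigma_2(E,\mathbb{S})=\sqrt{k!\,\det(P_ET_{\mathbb{S}}P_E)}$ recorded above gives $\sigma_2(e_i,\mathbb{S})=\sqrt{(T_{\mathbb{S}})_{ii}}$ for the lines $E=\mathrm{span}\{e_i\}$, whence $\prod_{i=1}^d\sigma_2(e_i,\mathbb{S})=(\det T_{\mathbb{S}})^{1/2}=\omega_d\,\mathrm{vis}^2(\mathbb{S})^d$, which is the claimed identity. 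For a general orthonormal basis Hadamard's inequality gives only $\prod_i\sigma_2(e_i,\mathbb{S})\ge(\det T_{\mathbb{S}})^{1/2}$, with equality exactly at the eigenbasis, so the diagonalising choice is the point that matters.

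For the ``moreover'' part I would invoke identity \eqref{T_S_1}, namely $Q_d^2(\mathbb{S})^{2d}=d!\,\det T_{\mathbb{S}}$. Combined with $\omega_d\,\mathrm{vis}^2(\mathbb{S})^d=(\det T_{\mathbb{S}})^{1/2}$ this pins down $\mathrm{vis}^2(\mathbb{S})$ in terms of $Q_d^2(\mathbb{S})$ exactly, namely $\mathrm{vis}^2(\mathbb{S})^d=Q_d^2(\mathbb{S})^d/(\sqrt{d!}\,\omega_d)$, from which the two-sided bound is immediate — the lower inequality being an equality and the factor $\sqrt{d}\ge1$ giving room in the upper one. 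Alternatively, and this yields the two-sided statement directly, one applies Proposition~\ref{vis_2_prop} to the coordinate cover $\sigma_i=\{i\}$, $p_i=1$, $m=d$, relative to the orthonormal eigenbasis (where $BL_2=1$): this sandwiches $Q_d^2(\mathbb{S})^d$ between $\sqrt{d!/d^d}\,\prod_i\sigma_2(e_i,\mathbb{S})$ and $\sqrt{d!}\,\prod_i\sigma_2(e_i,\mathbb{S})$, and substituting the first identity $\prod_i\sigma_2(e_i,\mathbb{S})=\omega_d\,\mathrm{vis}^2(\mathbb{S})^d$ completes the proof.

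There is no genuine analytic obstacle here: the argument is an identification followed by bookkeeping. The only step requiring care is the passage to the eigenbasis of $T_{\mathbb{S}}$, which is precisely what makes $\prod_i\sigma_2(e_i,\mathbb{S})$ collapse to $(\det T_{\mathbb{S}})^{1/2}$; and, for the route through Proposition~\ref{vis_2_prop}, the observation that $BL_2=1$ for the coordinate cover relative to an orthonormal basis, so that the general ellipsoidal Brascamp--Lieb bounds specialise cleanly.
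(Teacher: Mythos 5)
Your argument is correct and follows the same ellipsoid/determinant route as the paper, but it surfaces two points that the paper's statement and proof gloss over. You are right that the equality $\prod_i\sigma_2(e_i,\mathbb{S})=(\det T_{\mathbb{S}})^{1/2}$, and hence the displayed identity for $\mathrm{vis}^2(\mathbb{S})$, holds only when $\{e_i\}$ diagonalizes $T_{\mathbb{S}}$ (i.e.\ is the principal-axis basis of the ellipsoid $K^2(\mathbb{S})$); for a generic orthonormal basis Hadamard gives only $\prod_i\sigma_2(e_i,\mathbb{S})\ge(\det T_{\mathbb{S}})^{1/2}$, so the phrase ``usual orthonormal basis'' in the theorem should read ``a diagonalizing orthonormal basis''. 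You also obtain the exact relation $\mathrm{vis}^2(\mathbb{S})^d=Q_d^2(\mathbb{S})^d/(\sqrt{d!}\,\omega_d)$; this reveals that the normalizing constant in the two-sided bound should be $(\sqrt{d!}\,\omega_d)^{1/d}$ rather than $(\sqrt{d!\,\omega_d})^{1/d}$ as typeset --- with the latter the lower inequality would actually fail whenever $\omega_d>1$, that is for $d\le 12$ --- and moreover that the corrected lower bound is an equality, the factor $\sqrt{d}$ on the right simply being the slack of the lower Brascamp--Lieb estimate. (Relatedly, in the paper's own proof the intermediate double inequality deduced from Proposition~\ref{vis_2_prop} should have $Q_d^2(\mathbb{S})^d$, not $Q_d^2(\mathbb{S})$.) Your direct derivation through $\det T_{\mathbb{S}}$ is a touch cleaner than the paper's detour through Proposition~\ref{vis_2_prop} and makes the sharpness of the lower bound transparent.
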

\medskip
\begin{proof}
Since $K^2(\mathbb{S})$ is an ellipsoid     \begin{equation}\label{vis_ellips_1}\mathrm{vis}^2(\mathbb{S})^d=\frac{1}{|K^2(\mathbb{S})|}=\frac{1}{\omega_d^2}|K^2(\mathbb{S})^{\circ}|.
    \end{equation}
Moreover, the length of each axis is
    $$h_{K^2(\mathbb{S})^o}(e_i)=\|e_i\|_{K^2(\mathbb{S})}=\bigg(\int_S|\langle e_i,v(x)\rangle|^2d\sigma(x)\bigg)^{1/2}=\bigg(\int_S| e_i^{\perp}\wedge v(x)|^2d\sigma(x)\bigg)^{1/2}=\sigma_2(e_i,\mathbb{S}).$$
 Therefore,  $$|K^2(\mathbb{S})^{\circ}|=\omega_d\prod_{i=1}^d\sigma_2(e_i,\mathbb{S}).$$
    Substituting into \eqref{vis_ellips_1}, we obtain the first assertion.
    For the inequality, apply Proposition \ref{vis_2_prop} for $E_i=\mathrm{span}(e_i)$ and $p_i=1$. Since $BL_2=1$, this yields
    $$\sqrt{\frac{d!}{d^d}}\prod_{i=1}^d\sigma_2(e_i,\mathbb{S})\le Q_d^2(\mathbb{S})\le\sqrt{d!}\prod_{i=1}^d\sigma_2(e_i,\mathbb{S})$$
    Substituting into the above double inequality, we obtain the second assertion. 
\end{proof}
\subsection{Bounds for all  \texorpdfstring{$p\geq 1$}{p >= 1}.}
To start with we offer an upper bound for $\mathrm{vis}^p(\mathbb{S})$.
\begin{proposition}\label{vis_p}
    Let $\mathbb{S}=(S,\sigma,v)$ be a hypersurface. For every $p\ge1$ we have 
    \begin{equation*}      \mathrm{vis}^p(\mathbb{S})=\frac{1}{|K^p(\mathbb{S})|^{1/d}}\le\frac{c_{d,p}^{1/p}}{\omega_d^{1/d}}Q_1^p(\mathbb{S}),
    \end{equation*}
where $c_{d,p}=\frac{d+p}{p}\frac{\omega_{d-1}}{\omega_d}\frac{\Gamma\big(\frac{p+1}{2}\big)\Gamma\big(\frac{d+1}{2}\big)}{\Gamma\big(\frac{d+p+1}{2}\big)}$.
\end{proposition}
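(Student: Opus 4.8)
The plan is to recast the asserted bound as a volume inequality for $K^{p}(\mathbb S)$ and then, by averaging over the unit sphere, to reduce it to a single classical $L^{p}$–moment computation. First I would note that $\mathrm{vis}^{p}(\mathbb S)\le \frac{c_{d,p}^{1/p}}{\omega_d^{1/d}}Q_1^p(\mathbb S)$ is equivalent to the lower bound $|K^{p}(\mathbb S)|\ge \omega_d\,c_{d,p}^{-d/p}\,Q_1^p(\mathbb S)^{-d}$. If the directions $v(x)$ fail to span $\mathbb R^{d}$ then $K^{p}(\mathbb S)$ is unbounded, $|K^{p}(\mathbb S)|=\infty$, and there is nothing to prove; so I may assume $K^{p}(\mathbb S)$ is a convex body. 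Writing $\|\cdot\|_{K^{p}}$ for its gauge, so that $\|\theta\|_{K^{p}}^{p}=\int_{S}|\langle\theta,v(x)\rangle|^{p}\,d\sigma(x)$, polar coordinates give the identity $|K^{p}(\mathbb S)|=\omega_d\int_{S^{d-1}}\|\theta\|_{K^{p}}^{-d}\,d\nu(\theta)$, where $\nu$ denotes the uniform probability measure on $S^{d-1}$.

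Next I would apply Jensen's inequality to the convex function $t\mapsto t^{-d/p}$ on $(0,\infty)$, obtaining $\int_{S^{d-1}}\|\theta\|_{K^{p}}^{-d}\,d\nu\ge\big(\int_{S^{d-1}}\|\theta\|_{K^{p}}^{p}\,d\nu\big)^{-d/p}$. By Fubini and homogeneity, $\int_{S^{d-1}}\|\theta\|_{K^{p}}^{p}\,d\nu(\theta)=\int_{S}|v(x)|^{p}\Big(\int_{S^{d-1}}|\langle\theta,\hat v(x)\rangle|^{p}\,d\nu(\theta)\Big)\,d\sigma(x)$, and by rotation invariance the inner integral equals the constant $\kappa_{d,p}:=\int_{S^{d-1}}|\langle\theta,e_{1}\rangle|^{p}\,d\nu(\theta)$, a Beta integral evaluating to a quotient of Gamma functions; hence $\int_{S^{d-1}}\|\theta\|_{K^{p}}^{p}\,d\nu=\kappa_{d,p}\,Q_1^p(\mathbb S)^{p}$. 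Combining the two estimates yields $|K^{p}(\mathbb S)|\ge \omega_d\,\kappa_{d,p}^{-d/p}\,Q_1^p(\mathbb S)^{-d}$, that is, $\mathrm{vis}^{p}(\mathbb S)\le \omega_d^{-1/d}\,\kappa_{d,p}^{1/p}\,Q_1^p(\mathbb S)$; it then remains to evaluate $\kappa_{d,p}$ in closed form and to match the resulting factor with $c_{d,p}$ using the identity $\omega_{d-1}/\omega_d=\Gamma(\tfrac d2+1)/(\sqrt\pi\,\Gamma(\tfrac{d+1}{2}))$.

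The substantive part is really bookkeeping rather than conceptual: carrying out the moment integral $\kappa_{d,p}=\int_{S^{d-1}}|\langle\theta,e_{1}\rangle|^{p}\,d\nu(\theta)$ and reconciling it with the precise form of the constant $c_{d,p}$ recorded in the statement, together with checking that the power of $\omega_d$ and the exponent $1/p$ come out correctly through the Jensen step. One should also dispose of the degenerate (non-compact) case separately, as above. As a sanity check, the cruder pointwise bound $\|\theta\|_{K^{p}}\le|\theta|\,Q_1^p(\mathbb S)$ (from Cauchy–Schwarz) exhibits the Euclidean ball $Q_1^p(\mathbb S)^{-1}B_2^{d}$ inside $K^{p}(\mathbb S)$ and already yields an inequality of exactly this shape with the universal constant $1$ in place of $c_{d,p}$, which is consistent with the averaging argument above and isolates precisely where the sharper $\Gamma$‑factor enters.
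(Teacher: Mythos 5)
Your strategy is sound and is in fact \emph{the same} as the paper's: the inequality you derive by Jensen, namely
\[
\Big(\omega_d\int_{S^{d-1}}\|\theta\|_{K^p}^{-d}\,d\nu(\theta)\Big)^{-1/d}
\le \omega_d^{-1/d}\Big(\int_{S^{d-1}}\|\theta\|_{K^p}^{p}\,d\nu(\theta)\Big)^{1/p},
\]
is precisely the special case $K=B_2^d$, $C=K^p(\mathbb{S})$ of the Milman--Pajor lemma $\big(|K|/|C|\big)^{1/d}\le\big(\tfrac{d+p}{d}\tfrac{1}{|K|}\int_K\|x\|_C^p\,dx\big)^{1/p}$ that the paper invokes, after the $\frac{d\omega_d}{d+p}$ from the polar-coordinate reduction is cancelled against the $\frac{d+p}{d\omega_d}$ in the lemma. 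So you have effectively re-derived the one ingredient the paper cites, and the rest (Fubini plus rotation invariance) is identical.

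The issue is that the step you deferred --- ``evaluate $\kappa_{d,p}$ in closed form and match it with $c_{d,p}$'' --- does not go through, and it is worth carrying out because it exposes a problem with the stated constant. The moment you and the paper both need is
\[
\kappa_{d,p}=\int_{S^{d-1}}|\langle\theta,e_1\rangle|^{p}\,d\nu(\theta)
=\frac{\Gamma\big(\tfrac d2\big)\,\Gamma\big(\tfrac{p+1}{2}\big)}{\sqrt{\pi}\,\Gamma\big(\tfrac{d+p}{2}\big)}
=\frac{2\,\omega_{d-1}}{d\,\omega_d}\,\frac{\Gamma\big(\tfrac{p+1}{2}\big)\Gamma\big(\tfrac{d+1}{2}\big)}{\Gamma\big(\tfrac{d+p}{2}\big)},
\]
whereas the proposition's $c_{d,p}$ simplifies (using $\omega_{d-1}/\omega_d=\Gamma(\tfrac d2+1)/(\sqrt{\pi}\,\Gamma(\tfrac{d+1}{2}))$) to
\[
c_{d,p}=\frac{d+p}{p}\cdot\frac{\Gamma\big(\tfrac d2+1\big)\Gamma\big(\tfrac{p+1}{2}\big)}{\sqrt{\pi}\,\Gamma\big(\tfrac{d+p+1}{2}\big)}.
\]
These are not equal: for $d=p=2$, $\kappa_{2,2}=\tfrac12$ while $c_{2,2}=\tfrac{4}{3\sqrt\pi}\approx0.75$. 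The bound one actually obtains --- from your Jensen step or from the paper's own proof with the correct moment --- is $\mathrm{vis}^p(\mathbb S)\le\omega_d^{-1/d}\,\kappa_{d,p}^{1/p}\,Q_1^p(\mathbb S)$, with equality when $v_\#\sigma$ is the uniform measure on $S^{d-1}$. Worse, for large $p$ one has $c_{d,p}<\kappa_{d,p}$ (e.g. $d=2$, $p=10$ gives $\kappa_{2,10}=\tfrac{945}{3840}\approx0.246$ but $c_{2,10}=\tfrac{12}{55\sqrt\pi}\approx0.123$), so the proposition as printed would be violated by the uniform measure on the sphere. So the constant in the statement needs to be replaced by $\kappa_{d,p}$; once you make that replacement your argument is complete and correct.
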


\begin{proof}
    Our starting point will be the following inequality (see for example in \cite[p.76]{Milman1989}) that holds for every convex body $K$ and every symmetric convex body $C$ and $p>0$.
    \begin{equation}
        \Bigg(\frac{|K|}{|C|}\Bigg)^{1/d}\le\Bigg(\frac{d+p}{d}\frac{1}{|K|}\int_{K}\|x\|_{C}^pdx\Bigg)^{1/p}
    \end{equation}
    For $K=B_2^d$ and $C=K^p(\mathbb{S})$ the above inequality becomes
    \begin{equation*}
         \Bigg(\frac{\omega_d}{|K^p(\mathbb{S})|}\Bigg)^{1/d}\le\Bigg(\frac{d+p}{d\omega_d}\int_{B_2^d}\|x\|_{K^p(\mathbb{S})}^pdx\Bigg)^{1/p}
    \end{equation*}
    Using polar coordinates to evaluate we get
    \begin{align*}
        &\int_{B_2^d}\|x\|_{K^p(\mathbb{S})}^pdx
        =d\omega_d\int_{S^{d-1}}\int_0^1\|r\theta\|_{K^p(\mathbb{S})}^pr^{d-1}drd\sigma_{S^{d-1}}(\theta)\\
        &=d\omega_d\Bigg(\int_0^1r^{d+p-1}dr\Bigg)\Bigg(\int_{S^{d-1}}\|\theta\|_{K^p(\mathbb{S})}^pd\sigma_{S^{d-1}}(\theta)\Bigg)\\
        &=\frac{d\omega_d}{d+p}\int_{S^{d-1}}\|\theta\|_{K^p(\mathbb{S})}^pd\sigma_{S^{d-1}}(\theta).
    \end{align*}
Using the definition of the norm of the convex body $K^p(\mathbb{S})$ we obtain $\|\theta\|_{K^p(\mathbb{S})}^p=\int_{S}|\langle\theta,v(x)\rangle|^pd\sigma(x)$ and the formula $\int_{S^{d-1}}|\langle\theta,y\rangle|^pd\sigma_{S^{d-1}}(\theta)=c_{d,p}\|y\|_2^p$ for $y=v(x)$ implies
    \smallskip
    \begin{align*}
        &\int_{S^{d-1}}\|\theta\|_{K^p(\mathbb{S})}^pd\sigma_{S^{d-1}}(\theta)
        =\int_{S^{d-1}}\int_{S}|\langle\theta,v(x)\rangle|^pd\sigma(x)d\sigma_{S^{d-1}}(\theta)\\
        &=\int_{S}\int_{S^{d-1}}|\langle\theta,v(x)\rangle|^pd\sigma_{S^{d-1}}(x)d\sigma_{S}(\theta)\\
        &=c_{d,p}\int_{S}\|v(x)\|_2^pd\sigma(x)\\
        &=c_{d,p}Q_1^p(\mathbb{S})^p.
    \end{align*}
    Combining all the above, we obtain the upper bound. 
\end{proof}

\noindent The transition from $Q_1^p$ to $Q_d^p$ is achieved by making use of Lewis' position.
\begin{proposition}\label{thm:inf-a-vs-I}
With the notation above we have
\[
Q_{d}^p(\mathbb{S}) \;\le\; \inf_{\substack{A\in GL(d)\\|\det A|=1}} a(A) \;\le\; \Big(\frac{d^d}{d!}\Big)^{1/(2d)}\; Q_{d}^p(\mathbb{S}),
\]
where \[
a(A):=\Big(\int_S\|A v(x)\|_2^p\,d\sigma(x)\Big)^{1/p}.
\]
\end{proposition}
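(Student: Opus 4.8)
The plan is to relate $Q_d^p(\mathbb S)$ to the quantity $a(A)$ by using the volume-preserving action of $GL(d)$ on the wedge functional and then applying the extremal property of the Lewis position from Proposition~\ref{lewis_lemma}. For the lower bound, recall the geometric identity $|Av_1\wedge\cdots\wedge Av_d| = |\det A|\,|v_1\wedge\cdots\wedge v_d|$, which since $|\det A|=1$ gives
\[
Q_d^p(\mathbb S)^{dp} = \int_S\cdots\int_S |v(x_1)\wedge\cdots\wedge v(x_d)|^p\,d\sigma(x_1)\cdots d\sigma(x_d) = \int_S\cdots\int_S |Av(x_1)\wedge\cdots\wedge Av(x_d)|^p\,\prod d\sigma(x_i).
\]
Now bound the wedge of the transformed vectors by the product of their norms (Hadamard's inequality, $|w_1\wedge\cdots\wedge w_d|\le \prod_i\|w_i\|_2$), so that the integral factorizes as $a(A)^{dp}$. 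Taking $dp$-th roots yields $Q_d^p(\mathbb S)\le a(A)$ for every $A$ with $|\det A|=1$, hence the left inequality after taking the infimum.

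For the right inequality, apply Proposition~\ref{lewis_lemma} to obtain $u\in GL(d)$ with $a(u)=1$ and the isotropic identity
\[
\int_S \|uv(x)\|_2^{p-2}\,(uv(x))\otimes(uv(x))\,d\sigma(x) = \frac1d I_d.
\]
Since the infimum over $|\det A|=1$ is, after rescaling, comparable to $a(u)$ up to the factor $|\det u|^{1/d}$, the real content is to show $a(u) \le (d^d/d!)^{1/(2d)} |\det u|^{1/d} Q_d^p(\mathbb S)$; equivalently, writing $z(x):=uv(x)$ and $d\mu(x):=\|z(x)\|_2^{p-2}d\sigma(x)$ we must show a lower bound for $Q_d^p$ in terms of the isotropic data. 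The idea is Cauchy--Schwarz exactly as in the computation of $\sigma(E_i,\mathbb S)$ in Section~\ref{section-5}: write
\[
\int_S\cdots\int_S|z(x_1)\wedge\cdots\wedge z(x_d)|^p\prod d\sigma(x_i) = \int_S\cdots\int_S\Big|\tfrac{z(x_1)}{\|z(x_1)\|}\wedge\cdots\wedge\tfrac{z(x_d)}{\|z(x_d)\|}\Big|^p\prod\|z(x_i)\|^p\,d\sigma(x_i),
\]
and for $p\ge 2$ use Jensen/H\"older against the probability-type measure to pass from the $p$-th power of the normalized wedge to its square, whose integral against the appropriate power of $\|z\|$ is exactly $d!\det(\tfrac1d I_d) = d!/d^d$ by the determinant formula \eqref{T_S_1} applied to the isotropic matrix. (For $1\le p<2$ one instead goes the other direction, using that raising to a power $\ge 1$ only helps in the needed direction; one tracks the $\|z\|$-weights so that $\int_S\|z(x)\|_2^{p-2}\|z(x)\|_2^2\,d\sigma = \int_S\|z(x)\|_2^p\,d\sigma = a(u)^p = 1$ supplies the remaining normalization.) Collecting the powers of $d$ and the factorials gives the constant $(d^d/d!)^{1/(2d)}$.

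The main obstacle is handling the interplay of the weight $\|z(x)\|_2^{p-2}$ coming from the Lewis isotropic identity with the $p$-th powers appearing in $Q_d^p$: one must choose the splitting in Cauchy--Schwarz (or H\"older with the correct exponents) so that (i) the normalized-wedge factor gets reduced to an exponent $2$ matching the determinant formula \eqref{T_S_1}, and (ii) the leftover scalar factors $\|z(x)\|$ assemble precisely into the total mass $\int_S\|z(x)\|_2^p\,d\sigma(x)=a(u)^p=1$ (using that $u$ is in Lewis position) so no stray constants survive. Once the exponent bookkeeping is set up correctly, everything else — the determinant evaluation, the translation back to $v$ via $|\det u|$, and the rescaling to enforce $|\det A|=1$ — is routine.
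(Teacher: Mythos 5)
Your proof is essentially the paper's for $p\ge 2$. The lower bound (Hadamard applied to $Av(x_1)\wedge\cdots\wedge Av(x_d)$ together with $|\det A|=1$) is identical. For the upper bound you correctly identify the ingredients: Lewis position giving $u$ with $a(u)=1$ and $M:=\int_S\|uv\|_2^{p-2}(uv)\otimes(uv)\,d\sigma=\tfrac1d I_d$; the determinant formula, which rewrites $d!\det M = d!\,d^{-d}$ as $\int_{S^d}|z_1\wedge\cdots\wedge z_d|^2\prod\|z_i\|_2^{p-2}\,d\sigma^{\otimes d}$ with $z:=uv$; a H\"older step to compare this to $Q_d^p(\mathbb S)^{2d}$; and the rescaling $A=(\det u)^{-1/d}u$. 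The paper applies H\"older with exponents $p/2$ and $p/(p-2)$ to bound the determinant integral from above by $Q_d^p(\mathbb S)^{2d}\,a(u)^{d(p-2)}=Q_d^p(\mathbb S)^{2d}$, which gives $|\det u|\ge(d!/d^d)^{1/2}Q_d^p(\mathbb S)^{-d}$ and hence the stated bound. Your description of which direction the H\"older inequality runs is somewhat muddled, but the route is the same.

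Your parenthetical treatment of $1\le p<2$ is, however, a genuine gap, and it is present in the paper's proof too: the H\"older exponents $p/2$ and $p/(p-2)$ are not admissible for $p<2$. Your substitute observation (that $W:=|\hat z_1\wedge\cdots\wedge\hat z_d|\in[0,1]$ satisfies $W^p\ge W^2$ when $1\le p\le 2$, so $\int|z_1\wedge\cdots\wedge z_d|^p\,d\sigma^{\otimes d}\ge d!/d^d$) is correct, but it yields only $a(A)\le(d^d/d!)^{1/(pd)}Q_d^p(\mathbb S)$, a strictly weaker constant since $d^d/d!>1$. Indeed the stated constant fails for $p<2$: taking $\mathbb S=(S^1,\sigma_{\mathrm{unif}},\mathrm{id})$ with the normalized uniform probability measure, Lewis position gives $u=I$ and $\inf_{|\det A|=1}a(A)=1$, whereas $(d^d/d!)^{1/(2d)}\,Q_2^1(\mathbb S)=2^{1/4}\sqrt{2/\pi}\approx 0.95<1$. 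You should either restrict the statement to $p\ge 2$ (which is what the argument actually proves) or record the weaker constant $(d^d/d!)^{1/(pd)}$ for $1\le p<2$.
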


\begin{proof}
First we prove the lower bound. Fix $A\in GL(d)$ with $|\det A|=1$. For every $(x_1,\dots,x_d)\in S^d$ Hadamard's inequality gives
\[
\big|A v(x_1)\wedge\dots\wedge A v(x_d)\big|
\le \prod_{i=1}^d \|A v(x_i)\|_2.
\]
Therefore, since $|\det(A)|=1$,
\[
\int_{S^d} \big| v(x_1)\wedge\dots\wedge  v(x_d)\big|^p\,d\sigma^{\otimes d}
\le \int_{S^d}\prod_{i=1}^d \|A v(x_i)\|_2^p
= \Big(\int_S \|A v(x)\|_2^p\,d\sigma(x)\Big)^d.
\]
This is true for every $A$ with $|\det A|=1$, therefore we obtain the left-hand side  inequality.
\bigskip

We turn to the upper bound. Let $u$ be the matrix that satisfies the equalities from Lemma~\ref{lewis_lemma}, $z(x):=u v(x)$, $w(x):=\|u v(x)\|_2^{p-2}$ and the matrix 
\[
M:=\int_S w(x)\,z(x)\otimes z(x)\,d\sigma(x)
\]
equals $\tfrac{1}{d}I_d$, so $\det M=(1/d)^d$. Thus, from the properties of mixed discriminants we obtain 
\[
\Big(\frac{1}{d}\Big)^d
= \frac{|\det u|^2}{d!}\int_{S^d} \big| v(x_1)\wedge\dots\wedge  v(x_d)\big|^2 \prod_{i=1}^d \|u v(x_i)\|_2^{p-2}\,d\sigma^{\otimes d}.
\]
Applying Hölder's inequality on $S^d$ with exponents $\alpha=\tfrac{p}{2}$ and $\beta=\tfrac{p}{p-2}$ we get
\[
\begin{aligned}
&\int_{S^d} \big| v(x_1)\wedge\dots\wedge  v(x_d)\big|^2 \prod_{i=1}^d \|u v(x_i)\|_2^{p-2}\,d\sigma^{\otimes d}
\le
Q_j^p(\mathbb{S})^{2d}
\Big(\int_{S^d}\prod_{i=1}^d \|u v(x_i)\|_2^{p}\,d\sigma^{\otimes d}\Big)^{(p-2)/p}.
\end{aligned}
\]
The second factor factorizes:
\[
\int_{S^d}\prod_{i=1}^d \|u v(x_i)\|_2^{p}\,d\sigma^{\otimes d}
= \Big(\int_S \|u v(x)\|_2^{p}\,d\sigma(x)\Big)^d = a(u)^{pd} = 1^{pd} = 1,
\]
because $a(u)=1$ by Lewis' normalization. Hence
\[
\int_{S^d} \big| v(x_1)\wedge\dots\wedge  v(x_d)\big|^2 \prod_{i=1}^d \|u v(x_i)\|_2^{p-2}\,d\sigma^{\otimes d}
\le Q_j^p(\mathbb{S})^{2d}.
\]
Plugging this bound into the previous identity yields
\[
\Big(\frac{1}{d}\Big)^d \le \frac{|\det u|^2}{d!}\, Q_j^p(\mathbb{S})^{2d},
\]
hence
\begin{equation}\label{det_u_ineq}
|\det u| \ge \Big(\frac{d!}{d^d}\Big)^{1/2}\, Q_j^p(\mathbb{S})^{-d}.
\end{equation}
Set
\[
A := (\det u)^{-1/d}\, u.
\]
Then $|\det A|=1$ and using homogeneity of $a(\cdot)$ we have \(
a(A) = (\det u)^{-1/d}\, a(u) = (\det u)^{-1/d}.\) From the lower bound on $|\det u|$ we obtain
\[
a(A) \le \Big(\frac{d^d}{d!}\Big)^{1/(2d)}\, Q_j^p(\mathbb{S}),
\]
which completes the proof. 
\end{proof}
\noindent The above proposition allows us to prove our main Theorem for this section, completing the comparison of $\mathrm{vis}^p$ and $Q_j^p$.

\begin{theorem}\label{thm:vis-vs-I}
With the notation above there exist positive constants $C_1(d,p),C_2(d,p)$ depending only on $d,p$ such that
\[
C_1(d,p)\; Q_d^p(\mathbb{S}) \;\le\; \mathrm{vis}^p(\mathbb S) \;\le\; C_2(d,p)\; Q_d^p(\mathbb{S}).
\]
Combining with Proposition~\ref{thm:inf-a-vs-I}, one has
\[
\mathrm{vis}^p(\mathbb S) \;\asymp_{d,p}\; \inf_{\substack{A\in GL(d)\\|\det A|=1}} a(A)
\;\asymp_{d,p}\; Q_d^p(\mathbb{S}).
\]
\end{theorem}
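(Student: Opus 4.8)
The plan is to establish the two inequalities separately, in both cases normalising the vector field by a volume-preserving linear map and exploiting that $\mathrm{vis}^p$ is unchanged by such a normalisation. The first thing to record is the transformation rule $K^p(\mathbb{S}_{Av})=A^{-T}K^p(\mathbb{S})$ for $A\in GL(d)$, which follows at once from $\|y\|_{K^p(\mathbb{S}_{Av})}=\|A^Ty\|_{K^p(\mathbb{S})}$; it gives $\mathrm{vis}^p(\mathbb{S}_{Av})=|\det A|^{1/d}\,\mathrm{vis}^p(\mathbb{S})$, so $\mathrm{vis}^p$ is indeed invariant along the $|\det A|=1$ orbit. I would also use the elementary identity $\Gamma(\tfrac dp+1)\,|K|=\int_{\mathbb{R}^d}e^{-\|y\|_K^p}\,dy$ for a convex body $K$, obtained by polar coordinates. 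The degenerate case in which $v$ lies $\sigma$-a.e.\ in a proper subspace (equivalently $Q_d^p(\mathbb{S})=0$) makes $K^p$ unbounded and both sides vanish, so it may be set aside; I also work under the standing hypothesis $\int_S\|v\|_2^p\,d\sigma<\infty$ needed for Lewis' position.

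For the \emph{upper bound} I would apply Proposition~\ref{vis_p} not to $\mathbb{S}$ itself but to the rescaled hypersurface $\mathbb{S}_{Av}$ with $|\det A|=1$: since $\mathrm{vis}^p(\mathbb{S})=\mathrm{vis}^p(\mathbb{S}_{Av})$ and $Q_1^p(\mathbb{S}_{Av})=a(A)$, this gives $\mathrm{vis}^p(\mathbb{S})\le \tfrac{c_{d,p}^{1/p}}{\omega_d^{1/d}}\,a(A)$ for every such $A$. Taking the infimum over $A$ and inserting the right-hand bound of Proposition~\ref{thm:inf-a-vs-I} yields $\mathrm{vis}^p(\mathbb{S})\le C_2(d,p)\,Q_d^p(\mathbb{S})$ with $C_2(d,p)=\tfrac{c_{d,p}^{1/p}}{\omega_d^{1/d}}\big(d^d/d!\big)^{1/(2d)}$.

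For the \emph{lower bound} I would pass to Lewis' position: pick $u\in GL(d)$ as in Proposition~\ref{lewis_lemma} (for $p=1$ under the extra non-degeneracy hypotheses on $v$ recorded there), so $a(u)=1$ and the isotropic identity holds. With $s(x):=uv(x)/\|uv(x)\|_2$ and $d\mu(x):=d\,\|uv(x)\|_2^p\,d\sigma(x)$, the isotropic identity reads $\int_S s\otimes s\,d\mu=I_d$, and taking traces gives $\mu(S)=d$. Since $\langle y,uv(x)\rangle=\|uv(x)\|_2\langle y,s(x)\rangle$, the choice $f_x(t)=e^{-|t|^p/d}$ makes $\int_S\log f_x(\langle y,s(x)\rangle)\,d\mu(x)=-\|y\|_{K^p(\mathbb{S}_{uv})}^p$, so the continuous Brascamp--Lieb inequality (Theorem~\ref{cont_brascamp_lieb}), together with $\int_{\mathbb{R}}e^{-|t|^p/d}\,dt=2d^{1/p}\Gamma(1+\tfrac1p)$ and $\mu(S)=d$, bounds $\int_{\mathbb{R}^d}e^{-\|y\|_{K^p(\mathbb{S}_{uv})}^p}\,dy$ by $\big(2d^{1/p}\Gamma(1+\tfrac1p)\big)^d$; hence $\mathrm{vis}^p(\mathbb{S}_{uv})\ge \Gamma(\tfrac dp+1)^{1/d}/\big(2d^{1/p}\Gamma(1+\tfrac1p)\big)$. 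To descend from $\mathbb{S}_{uv}$ to $\mathbb{S}$ I would use $\mathrm{vis}^p(\mathbb{S})=|\det u|^{-1/d}\,\mathrm{vis}^p(\mathbb{S}_{uv})$ together with $|\det u|\le Q_d^p(\mathbb{S})^{-d}$, which comes from Hadamard's inequality applied to $uv(x_1)\wedge\cdots\wedge uv(x_d)=(\det u)\,v(x_1)\wedge\cdots\wedge v(x_d)$ after integrating the $p$-th powers over $S^d$ and using $a(u)=1$. This gives $\mathrm{vis}^p(\mathbb{S})\ge C_1(d,p)\,Q_d^p(\mathbb{S})$ with $C_1(d,p)=\Gamma(\tfrac dp+1)^{1/d}/\big(2d^{1/p}\Gamma(1+\tfrac1p)\big)$; combining the two bounds with Proposition~\ref{thm:inf-a-vs-I} then yields the stated chain of equivalences.

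The step I expect to require the most care is the bookkeeping of the factor $|\det u|$: since the Lewis matrix is not volume-preserving, one cannot simply identify $\mathrm{vis}^p(\mathbb{S})$ with $\mathrm{vis}^p(\mathbb{S}_{uv})$; the determinant enters once through the rescaling identity and once through the isotropic normalisation, which is precisely why the Brascamp--Lieb measure must carry the weight $d\|uv\|_2^p$ so that $\int_S s\otimes s\,d\mu=I_d$ holds exactly with $\mu(S)=d$. A second delicate point is verifying that the pair $(s,\mu)$ produced by Proposition~\ref{lewis_lemma} genuinely meets the normalisation hypothesis of Theorem~\ref{cont_brascamp_lieb} — this is exactly the content of the isotropic identity — and, when $p=1$, ensuring the additional hypotheses on $v$ needed for the existence of the Lewis position are in force.
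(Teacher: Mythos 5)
Your proposal is correct and follows essentially the same strategy as the paper's proof: the upper bound by combining Proposition~\ref{vis_p} with the volume-invariance of $\mathrm{vis}^p$ and the right-hand side of Proposition~\ref{thm:inf-a-vs-I}, and the lower bound by passing to Lewis' position, applying the continuous Brascamp--Lieb inequality with the weighted measure $d\mu = d\|uv\|_2^p\,d\sigma$ and the Gaussian-type density $f_x(t)=e^{-|t|^p/d}$, and then controlling $|\det u|$ via Hadamard. Your version is actually slightly cleaner in a couple of spots, for instance you correctly carry the extra factor of $d$ in the definition of $\mu$ (the paper's displayed definition $d\mu=\|z\|_2^p\,d\sigma$ omits it, though the subsequent computations implicitly assume it) and you track the constant $c_{d,p}^{1/p}/\omega_d^{1/d}$ from Proposition~\ref{vis_p} precisely rather than abbreviating it.
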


\begin{proof}
We first derive the upper bound for $\mathrm{vis}^p$.
Using the Proposition \ref{thm:inf-a-vs-I} there exists \(A_0\in GL(d)\) with \(|\det A_0|=1\) such that
\[
\left(\int_S \|A_0 v(x)\|_2^p\,d\sigma(x)\right)^{1/p} \le C_{d}\,Q_d^p(\mathbb{S}).
\]
Observe that if we replace the field \(v\) by \(v^{(A_0)}:=A_0 v\), then the associated body \(K^p\) transforms by the linear map \((A_0^T)^{-1}\) on the ambient space and hence
\[
|K^p(\mathbb S^{(A_0)})| = |\det(A_0^T)^{-1}|\, |K^p(\mathbb S)| = |\det A_0|^{-1}\,|K^p(\mathbb S)|.
\]
Since \(|\det A_0|=1\), the visibility is invariant:
\[
\mathrm{vis}^p(\mathbb S^{(A_0)})=\mathrm{vis}^p(\mathbb S).
\]
Using the bound from Proposition \ref{vis_p} to the vector field \(v^{(A_0)}=A_0 v\) we obtain
\[
\mathrm{vis}^p(\mathbb S)
= \mathrm{vis}^p(\mathbb S^{(A_0)})
\le c_{d,p}\left(\int_S \|A_0 v(x)\|_2^p\,d\sigma(x)\right)^{1/p}
\le c_{d,p}\, C_{d}\, Q_d^p(\mathbb{S}).
\]

For the other bound, by Lewis' lemma as previously, there exists $u\in GL(d)$ with $a(u)=1$, the dual normalization $a^*(u^{-1})=d$ and the the isotropic identity
\begin{equation}\label{iso}
\int_S \|u v(x)\|_2^{p-2}\,(u v(x))(u v(x))^T\,d\sigma(x)=\frac{1}{d}\,I_d.
\end{equation}
Set $z(x):=u v(x)$ and define the measure $\mu$ on $S$ by $d\mu(x):=\|z(x)\|_2^p d\sigma(x)$. Taking trace in \eqref{iso} gives $$\mu(S)=d\int_S\|z\|_2^p d\sigma =d.$$

We first use the well-known formula for the volume of the polar body
\[
\Gamma\!\Big(\frac{d}{p}+1\Big)\,|K^p(z)| \;=\; 
\int_{\mathbb R^d}\exp\!\Big(-\frac{1}{d}\int_S |\langle y,\tfrac{z(x)}{\|z(x)\|_2}\rangle|^p \,d\mu(x)\Big)\,dy.
\]
Since $\int_S \big(\tfrac{z}{\|z\|_2}\big)\otimes\big(\tfrac{z}{\|z\|_2}\big)\,d\mu = I_d$, we are in position to apply the continuous Brascamp–Lieb inequality, Lemma \ref{cont_brascamp_lieb} for the isotropic measure $\mu$ and the one–dimensional functions $G_x(t)=\exp\!\big(-\tfrac{1}{d}|t|^p\big)$, which yields 
\[
\Gamma\!\Big(\frac{d}{p}+1\Big)\,|K^p(z)|
\le \exp\!\Big(\int_S \log\Big(\int_{\mathbb R} e^{-\frac{1}{d}|t|^p}\,dt\Big)\,d\mu(x)\Big)
= \Big(\int_{\mathbb R} e^{-\frac{1}{d}|t|^p}\,dt\Big)^d.
\]
The one–dimensional integral rescales as
\[
\int_{\mathbb R} e^{-\frac{1}{d}|t|^p}\,dt = d^{1/p}\int_{\mathbb R} e^{-|s|^p}\,ds = d^{1/p}\,2\Gamma\!\Big(\frac{1}{p}+1\Big).
\]
Thus
\[
|K^p(z)| \le \frac{d^{d/p}\,(2\Gamma(\tfrac{1}{p}+1))^d}{\Gamma(\tfrac{d}{p}+1)}.
\]
Therefore
\begin{equation}\label{vis-z-lower}
\mathrm{vis}^p(\mathbb S_z)=|K^p(z)|^{-1/d} \ge
\frac{\Gamma(\tfrac{d}{p}+1)^{1/d}}{d^{1/p}\,2\Gamma(\tfrac{1}{p}+1)}=:c_0(d,p).
\end{equation}

To transfer this bound from $z=u v$ back to the original field $v$ we use the linear equivariance of $K^p$: one has $K^p(z)=u^{-T}K^p(v)$, hence $|K^p(z)|=|\det u|^{-1}|K^p(v)|$. Consequently
\[
\mathrm{vis}^p(\mathbb S)=|K^p(v)|^{-1/d}=|\det u|^{-1/d}\,\mathrm{vis}^p(\mathbb S_z).
\]
Combining this with \eqref{vis-z-lower} yields
\[
\mathrm{vis}^p(\mathbb S) \ge c_0(d,p)\;|\det u|^{-1/d}.
\]
It remains to lower bound $|\det u|^{-1/d}$ in terms of $Q_j^p(\mathbb{S})$. Observe that
\[
\int_{S^d}\big|v(x_1)\wedge \dots\wedge v(x_d)\big|^p
=|\det u|^{-p}\int_{S^d}\big|z(x_1)\wedge\dots\wedge z(x_d)\big|^p.
\]
By Hadamard's inequality, for each $(x_1,\dots,x_d)\in S^d$,
\[
\big|z(x_1)\wedge\dots\wedge z(x_d)\big|
\le \prod_{i=1}^d \|z(x_i)\|_2.
\]
Raising to the power \(p\) and integrating over \(S^d\) gives
\[
\int_{S^d}\big|\det[z(\mathbf x)]\big|^p
\le \int_{S^d}\prod_{i=1}^d \|z(x_i)\|_2^p
= \Big(\int_S \|z(x)\|_2^p d\sigma(x)\Big)^d = a(u)^{pd}=1.
\]
Therefore,
\[
Q_d^p(\mathbb{S})^{d} \le |\det u|^{-1}.
\]
Combining with the previous displayed inequality yields
\[
\mathrm{vis}^p(\mathbb S) \ge c_0(d,p)\; Q_d^p(\mathbb{S}).
\]
\end{proof}

\textbf{Acknowledgements.} The  authors acknowledge support by the Hellenic Foundation for Research and Innovation (H.F.R.I.) under the call “Basic research Financing (Horizontal support of all Sciences)” under the National Recovery and Resilience Plan “Greece 2.0” funded by the European Union–NextGeneration EU(H.F.R.I. Project Number:15445).

\bibliographystyle{plain}

\bibliography{geometric_quantities}
\section{Appendix}
\begin{lemma}[Continuous Brascamp--Lieb inequality on $S$]
Let $S$ be a hypersurface equipped with a positive measure $\mu$.
Let $v:S\to\mathbb{R}^n$ be measurable with $v(x)\neq0$ for $\mu$‑almost every $x\in S$, and assume the normalization
\begin{equation}\label{eq:identity_assumption}
I_d= \int_S \frac{v(x)\otimes v(x)}{\|v(x)\|_2^2}\,d\mu(x).
\end{equation}
Define the unit directions $u(x):=v(x)/\|v(x)\|_2\in S^{n-1}$ and suppose we are given a measurable family
of non-negative functions ${f_x:\mathbb R\to[0,\infty)},{x\in S}$ that meets the usual technical integrability
conditions.

Then the following inequality holds:
\begin{equation}\label{eq:conclusion_S}
\int_{\mathbb R^n}\exp\Big(\int_S\log\bigl(f_x(\langle y,u(x)\rangle)\bigr)\,d\mu(x)\Big)\,dy
\le
\exp\Big(\int_S\log\Big(\int_{\mathbb R} f_x(t)\,dt\Big),d\mu(x)\Big).
\end{equation}
\end{lemma}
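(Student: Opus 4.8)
The plan is to prove \eqref{eq:conclusion_S} by the heat‑flow (semigroup) interpolation method, carried out directly on the abstract measure space $(S,\mu)$ rather than on $S^{n-1}$; the entire structural content will reduce to a single Cauchy--Schwarz estimate powered by the normalization \eqref{eq:identity_assumption}. First I would normalize: both sides of \eqref{eq:conclusion_S} are homogeneous under $f_x\mapsto\lambda(x)f_x$ for any positive measurable weight $\lambda$, so one may assume $\int_{\mathbb R}f_x(t)\,dt=1$ for $\mu$‑a.e.\ $x$, which reduces the claim to the bound $\int_{\mathbb R^n}F\le1$ for $F(y):=\exp\bigl(\int_S\log f_x(\langle y,u(x)\rangle)\,d\mu(x)\bigr)$. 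Taking the trace in \eqref{eq:identity_assumption} and using $\|u(x)\|_2=1$ records the identity $\mu(S)=n$, which is needed in the limit $t\to\infty$.

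Next I would regularize. Let $\gamma_t$ be the centred Gaussian on $\mathbb R$ of variance $t>0$, set $f_{x,t}:=f_x*\gamma_t$, $F_t(y):=\exp\bigl(\int_S\log f_{x,t}(\langle y,u(x)\rangle)\,d\mu(x)\bigr)$ and $Q(t):=\int_{\mathbb R^n}F_t(y)\,dy$. Fatou's lemma gives $\liminf_{t\to0^+}Q(t)\ge\int_{\mathbb R^n}F$, and since $\sqrt{2\pi t}\,f_{x,t}\to\int_{\mathbb R}f_x=1$ locally uniformly one checks $F_t(y)\sim(2\pi t)^{-n/2}e^{-|y|^2/(2t)}$ (using $\mu(S)=n$ and \eqref{eq:identity_assumption} to evaluate $\int_S\langle y,u(x)\rangle^2\,d\mu=|y|^2$), hence $\lim_{t\to\infty}Q(t)=1$. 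Thus it suffices to show that $Q$ is non‑decreasing.

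The crux is therefore $Q'(t)\ge0$. Writing $\phi_t:=\log F_t$ and $w(y,x):=(\log f_{x,t})'(\langle y,u(x)\rangle)$, the heat equation $\partial_tf_{x,t}=\tfrac12f_{x,t}''$ gives $\partial_t\phi_t(y)=\tfrac12\int_S(f_{x,t}''/f_{x,t})\,d\mu$, while $(\log f)''=f''/f-(f'/f)^2$, $\|u(x)\|_2=1$ and $\Delta F_t=F_t(\Delta\phi_t+|\nabla\phi_t|^2)$ combine to give
\[
\int_S\frac{f_{x,t}''}{f_{x,t}}\,d\mu=\frac{\Delta F_t}{F_t}-\Bigl|\int_S w\,u\,d\mu\Bigr|^2+\int_S w^2\,d\mu .
\]
Integrating over $\mathbb R^n$ and discarding $\int_{\mathbb R^n}\Delta F_t\,dy=0$ (divergence theorem and decay) leaves
\[
Q'(t)=\frac12\int_{\mathbb R^n}F_t(y)\Bigl[\int_S w(y,x)^2\,d\mu(x)-\Bigl|\int_S w(y,x)\,u(x)\,d\mu(x)\Bigr|^2\Bigr]dy ,
\]
and the bracket is non‑negative: for the unit vector $e$ in the direction of $\int_S w\,u\,d\mu$, Cauchy--Schwarz together with \eqref{eq:identity_assumption} yields $\bigl|\int_S w\,u\,d\mu\bigr|^2=\langle\int_S w\,u\,d\mu,e\rangle^2\le\int_S w^2\,d\mu\cdot\int_S\langle u,e\rangle^2\,d\mu=\int_S w^2\,d\mu$. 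Hence $Q$ is non‑decreasing and $\int_{\mathbb R^n}F\le\liminf_{t\to0^+}Q(t)\le\lim_{t\to\infty}Q(t)=1$; undoing the normalization gives \eqref{eq:conclusion_S}.

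The main obstacle is analytic bookkeeping rather than a new idea: justifying differentiation under the integral sign in $Q'(t)$, the integration by parts $\int_{\mathbb R^n}\Delta F_t\,dy=0$ (needing decay of $F_t$ and $\nabla F_t$), and the limits $t\to0^+$ and $t\to\infty$. These are exactly what the ``usual technical integrability conditions'' on the family $\{f_x\}$ are meant to provide; in practice one first proves the statement for $f_x$ bounded, compactly supported, bounded below on their supports, and depending measurably and integrably on $x$, then removes these restrictions by truncation together with monotone/dominated convergence. As an alternative route, the inequality can also be deduced from the continuous Brascamp--Lieb inequality of \cite{Barthe2004} by pushing $\mu$ forward under $u$ to the measure $\nu:=u_\#\mu$ on $S^{n-1}$, which still satisfies $\int_{S^{n-1}}\theta\otimes\theta\,d\nu=I_n$, replacing the family $(f_x)$ by the conditional geometric means $g_\theta:=\exp\bigl(\mathbb E_\mu[\log f_x\mid u(x)=\theta]\bigr)$, and applying Jensen's inequality to pass from $(f_x)$ to $(g_\theta)$ without increasing the left‑hand side of \eqref{eq:conclusion_S} or decreasing its right‑hand side.
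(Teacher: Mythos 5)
Your heat–flow argument is correct in outline and is a genuinely different route from the paper's proof. The paper proves this lemma by pushing $\mu$ forward under $u$ to a measure $\nu:=u_\#\mu$ on $S^{n-1}$ (which inherits the isotropy condition), disintegrating $\mu$ along the fibres of $u$, forming fibre-wise geometric means $F_\omega(t):=\exp\bigl(\int_{u^{-1}(\{\omega\})}\log f_x(t)\,d\mu_\omega(x)\bigr)$, applying the known continuous Brascamp--Lieb inequality on the sphere to the family $(F_\omega)$, and finishing with Jensen's inequality on each fibre. Your closing paragraph already anticipates this reduction, so you have both routes in hand. What the paper's reduction buys is brevity and the ability to quote Barthe's result as a black box; what your semigroup interpolation buys is a self-contained proof that works directly on the abstract space $(S,\mu)$ without invoking disintegration or the spherical case, with the entire structural content concentrated in the single Cauchy--Schwarz step $\bigl|\int_S w\,u\,d\mu\bigr|^2\le\int_S w^2\,d\mu\cdot\int_S\langle u,e\rangle^2\,d\mu=\int_S w^2\,d\mu$, which uses the normalization $\int_S u\otimes u\,d\mu=I$ exactly once. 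Your computation of $Q'(t)$, the identity $\mu(S)=n$ from tracing the normalization, the Gaussian asymptotics giving $Q(t)\to1$ as $t\to\infty$, and the Fatou argument as $t\to0^+$ combined with monotonicity are all sound; the remaining work is, as you say, the standard regularization/truncation to justify differentiation under the integral sign and the vanishing of $\int_{\mathbb R^n}\Delta F_t\,dy$, which is precisely what the paper hides under ``the usual technical integrability conditions.''
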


\begin{proof}
The proof uses the push‑forward of $\mu$ under the map $u:S\to S^{n-1}$ and the disintegration of $\mu$ with respect to that map.

Define the push‑forward measure $\nu:=u_{\#}\mu$ on $S^{n-1}$ by the rule that 
for every measurable $\Phi:S^{n-1}\to\mathbb R$,

$$
\int_S\Phi(u(x))\,d\mu(x)=\int_{S^{n-1}}\Phi(\omega)\,d\nu(\omega).
$$

By assumption \eqref{eq:identity_assumption} we obtain
\begin{equation}\label{eq:nu_identity}
\int_{S^{n-1}}\omega\otimes\omega\,d\nu(\omega)=\int_S u(x)\otimes u(x)\,d\mu(x)=I.
\end{equation}

Next apply the measure disintegration theorem to decompose $\mu$ along the fibres of $u$:
there exists a family of probability (or finite) measures ${\mu_\omega}_{\omega\in S^{n-1}}$ supported on the fibres
$u^{-1}({\omega})$ such that for every nonnegative measurable $g:S\to\mathbb R$

$$
\int_S g(x)\,d\mu(x)=\int_{S^{n-1}}\Big(\int_{u^{-1}(\{\omega\})} g(x)\,d\mu_\omega(x)\Big)\,d\nu(\omega).
$$

(We may take the measures $\mu_\omega$ finite; one can normalize if desired — the argument below is unaffected.)

Define for $\nu$‑almost every $\omega\in S^{n-1}$ the function
\begin{equation}\label{eq:F_def}
F_\omega(t):=\exp\Big(\int_{u^{-1}({\omega})}\log f_x(t),d\mu_\omega(x)\Big),\qquad t\in\mathbb R.
\end{equation}
By the integrability assumptions, $F_\omega$ is well defined (possibly taking the value $0$) for $\nu$‑a.e. $\omega$ and every $t$.

For each fixed $y\in\mathbb R^n$ apply the disintegration identity to the function $x\mapsto\log f_x(\langle y,u(x)\rangle)$:
\begin{align*}
\int_S\log\bigl(f_x(\langle y,u(x)\rangle)\bigr),d\mu(x)
&=\int_{S^{n-1}}\Big(\int_{u^{-1}({\omega})}\log f_x(\langle y,\omega\rangle),d\mu_\omega(x)\Big),d\nu(\omega)\\
&=\int_{S^{n-1}}\log\bigl(F_\omega(\langle y,\omega\rangle)\bigr),d\nu(\omega).
\end{align*}
Hence the left-hand side of \eqref{eq:conclusion_S} can be written as

$$
\int_{\mathbb R^n}\exp\Big(\int_S\log f_x(\langle y,u(x)\rangle)\,d\mu(x)\Big)\,dy
=\int_{\mathbb R^n}\exp\Big(\int_{S^{n-1}}\log F_\omega(\langle y,\omega\rangle)\,d\nu(\omega)\Big)\,dy.
$$

Now apply the standard continuous Brascamp--Lieb inequality on the sphere
for the measure $\nu$ (which satisfies the normalization \eqref{eq:nu_identity}) and the family ${F_\omega}$:
\begin{equation}\label{eq:apply_BL_on_sphere}
\int_{\mathbb R^n}\exp\Big(\int_{S^{n-1}}\log F_\omega(\langle y,\omega\rangle),d\nu(\omega)\Big),dy
\le
\exp\Big(\int_{S^{n-1}}\log\Big(\int_{\mathbb R}F_\omega(t),dt\Big),d\nu(\omega)\Big).
\end{equation}

It remains to relate $\int_{\mathbb R}F_\omega$ to the integrals of the original $f_x$.
By definition of $F_\omega$ and Jensen's inequality applied to the probability measure
proportional to $\mu_\omega$ on the fibre
we have for $\nu$‑a.e. $\omega$:

$$
\int_{\mathbb R}F_\omega(t)\,dt
=\int_{\mathbb R}\exp\Big(\int_{u^{-1}(\{\omega\})}\log f_x(t)\,d\mu_\omega(x)\Big)\,dt
\le\exp\Big(\int_{u^{-1}(\{\omega\})}\log\Big(\int_{\mathbb R} f_x(t)\,dt\Big)\,d\mu_\omega(x)\Big).
$$

Substituting this bound into \eqref{eq:apply_BL_on_sphere} yields

$$
\int_{\mathbb R^n}\exp\Big(\int_{S^{n-1}}\log F_\omega(\langle y,\omega\rangle)\,d\nu(\omega)\Big)\,dy
\le
\exp\Big(\int_{S^{n-1}}\int_{u^{-1}(\{\omega\})}\log\Big(\int_{\mathbb R} f_x(t)\,dt\Big)\,d\mu_\omega(x)\,d\nu(\omega)\Big).
$$

Using the disintegration identity once again to recombine the fibre integrals into an integral over $S$ gives

$$
\int_{\mathbb R^n}\exp\Big(\int_S\log f_x(\langle y,u(x)\rangle)\,d\mu(x)\Big)\,dy
\le
\exp\Big(\int_S\log\Big(\int_{\mathbb R} f_x(t)\,dt\Big)\,d\mu(x)\Big),
$$

which is precisely \eqref{eq:conclusion_S} and completes the proof.
\end{proof}
\begin{lemma}[Lewis position]\label{lewis_lemma}
Let $(S,\sigma)$ be a finite measure space and $v:S\to\mathbb R^d$ measurable with 
\[
\int_S\|v(x)\|_2^p\,d\sigma(x)<\infty
\qquad(1< p<\infty).
\]
Define for $A\in L(\mathbb R^d)$ the functional
\[
a(A):=\Big(\int_S\|A v(x)\|_2^p\,d\sigma(x)\Big)^{1/p}
\]
Then there exists an invertible matrix $u\in GL(d)$ such that
\[
a(u)=1\qquad\text{and}\qquad a^*(u^{-1})=d,
\]
where $a^*$ is the dual norm on $L(\mathbb R^d)$ defined by $a^*(B)=\sup_{a(A)\le1}|\operatorname{tr}(B^T A)|$.
Moreover, for such $u$ the following isotropic identity holds
\begin{equation}
\int_S \|u v(x)\|_2^{p-2}\,(u v(x))\,(u v(x))^T \,d\sigma(x) \;=\; \frac{1}{d}\,I_d.
\end{equation}
\end{lemma}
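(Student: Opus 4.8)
The plan is to realise $u$ as the extremiser of a determinant‑maximisation problem and to read off both the isotropic identity and the dual‑norm equality from the first‑order optimality condition. Throughout I assume, as is implicit in the statement, that the essential linear span of $\{v(x):x\in S\}$ is all of $\mathbb R^d$, so that $a$ is a genuine norm on the finite‑dimensional space $L(\mathbb R^d)$ (not merely a seminorm) and $\{A\in L(\mathbb R^d):a(A)\le 1\}$ is compact. Since $a(I)<\infty$ this ball contains invertible matrices, and since $|\det(\lambda A)|=\lambda^d|\det A|$ the continuous function $|\det\cdot|$ attains a positive maximum on the ball at some $u$ with necessarily $a(u)=1$. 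Left multiplication by an orthogonal matrix $Q$ changes neither $|\det u|$ nor $a(\cdot)$ (because $a(Qu)^p=\int_S\|Quv\|_2^p\,d\sigma=\int_S\|uv\|_2^p\,d\sigma$), so, writing the polar decomposition $u=QP$ and replacing $u$ by $P=Q^{-1}u$, I may assume from now on that $u$ is symmetric positive definite; in particular $u^{-1}=u^{-T}$, which removes all transposition ambiguities below.

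The key input is the scale invariance of $A\mapsto|\det A|/a(A)^d$: for any fixed $B\in L(\mathbb R^d)$ and all $t$ in a neighbourhood of $0$ (where $u+tB$ is still invertible) one has $\frac{|\det(u+tB)|}{a(u+tB)^d}\le|\det u|=\frac{|\det u|}{a(u)^d}$ with equality at $t=0$, so $\varphi(t):=\log|\det(u+tB)|-d\log a(u+tB)$ attains its maximum at $t=0$. This function is $C^1$ near $0$: the $\det$ term is smooth, and for $p>1$ the map $w\mapsto\|w\|_2^p$ is $C^1$ on $\mathbb R^d$ with gradient $p\|w\|_2^{p-2}w$, so differentiating $a(u+tB)^p=\int_S\|uv(x)+tBv(x)\|_2^p\,d\sigma(x)$ under the integral sign is justified by dominated convergence (the difference quotients are bounded, uniformly for $|t|\le 1$, by a constant multiple of $\|v(x)\|_2^p$, which is $\sigma$‑integrable by hypothesis). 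Setting $\varphi'(0)=0$ and using $a(u)=1$ yields
\[
\operatorname{tr}(u^{-1}B)=d\int_S\|uv(x)\|_2^{\,p-2}\,\big\langle uv(x),Bv(x)\big\rangle\,d\sigma(x)\qquad\text{for every }B\in L(\mathbb R^d).
\]

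Now I exploit this identity twice. First, writing $\langle uv,Bv\rangle=\operatorname{tr}\!\big(B\,vv^{T}u^{T}\big)$ and putting $M:=\int_S\|uv(x)\|_2^{\,p-2}\,v(x)v(x)^{T}\,d\sigma(x)$, the right‑hand side equals $d\,\operatorname{tr}\!\big(B\,Mu^{T}\big)$; comparing with $\operatorname{tr}(Bu^{-1})$ for all $B$ forces $u^{-1}=d\,Mu^{T}$, i.e. $M=\tfrac1d\,u^{-1}u^{-T}$. Conjugating by $u$ gives the isotropic identity
\[
\int_S\|uv(x)\|_2^{\,p-2}\,(uv(x))(uv(x))^{T}\,d\sigma(x)=uMu^{T}=\tfrac1d\,I_d ,
\]
and taking traces recovers $\int_S\|uv\|_2^p\,d\sigma=a(u)^p=1$. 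Second, to obtain $a^*(u^{-1})=d$: inserting $A=u$ into the definition of $a^*$ gives $a^*(u^{-1})\ge|\operatorname{tr}((u^{-1})^{T}u)|=|\operatorname{tr}(I_d)|=d$ (using $u=u^{T}$), while bounding the right‑hand side of the optimality identity by Hölder's inequality with exponents $\tfrac{p}{p-1}$ and $p$ gives
\[
|\operatorname{tr}(u^{-1}B)|\le d\int_S\|uv\|_2^{\,p-1}\|Bv\|_2\,d\sigma\le d\Big(\int_S\|uv\|_2^{\,p}\,d\sigma\Big)^{\!\frac{p-1}{p}}\!\Big(\int_S\|Bv\|_2^{\,p}\,d\sigma\Big)^{\!\frac1p}=d\,a(u)^{p-1}a(B)=d\,a(B),
\]
so $a^*(u^{-1})\le d$. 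Combining, $a(u)=1$ and $a^*(u^{-1})=d$, which together with the displayed isotropic identity completes the proof.

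The only step that is more than bookkeeping is the differentiability of $a$ at $u$ together with the passage of the derivative under the integral sign; this is exactly where $p>1$ is used. For $p=1$ the integrand $\|w\|_2$ fails to be differentiable at $0$, and the argument then requires the extra non‑degeneracy assumptions listed for that case ($v\in L^1$, $v\neq0$ $\sigma$‑a.e., and essential span of $\{v(x)\}$ equal to $\mathbb R^d$); everything else above is linear algebra and an application of Hölder's inequality.
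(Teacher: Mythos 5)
Your proof is correct, and it takes a genuinely different route from the paper's. The paper invokes Lewis's theorem as a black box to obtain a $u$ with $a(u)=1$ and $a^*(u^{-1})=d$, then observes that these two facts force $\tfrac1d u^{-T}\in\partial a(u)$, and finally uses the Fr\'echet differentiability of $a$ for $p>1$ to turn the subgradient inclusion into the gradient equation that yields the isotropic identity. You instead re-prove Lewis's theorem from scratch: you realise $u$ as the maximiser of $|\det A|$ on the unit ball of $a$, reduce by polar decomposition to the symmetric positive-definite case so that $u^{-T}=u^{-1}$, and extract both the isotropic identity and the dual-norm equality $a^*(u^{-1})=d$ from the single first-order condition $\varphi'(0)=0$ (via a trace/duality computation on one side and H\"older on the other). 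Both arguments hinge in the same way on $p>1$, used to justify differentiation under the integral of $\|\cdot\|_2^p$. The trade-offs: your version is self-contained and makes the provenance of $u$ explicit, while the paper's is shorter and, by not normalising $u$ to be symmetric, proves the isotropic identity for an arbitrary $u$ satisfying the two normalizations rather than only for the particular determinant-maximiser you construct. One small caveat worth flagging: your reduction via polar decomposition establishes existence of a good $u$, whereas the ``moreover, for such $u$'' phrasing in the statement is most naturally read as applying to \emph{any} $u$ satisfying the two normalizations; if that reading is intended, the paper's subdifferential route is the one that delivers it directly. Also note that the hypothesis that $\{v(x)\}$ essentially spans $\mathbb{R}^d$ (so that $a$ is a norm and not merely a seminorm) is needed in both proofs, even though the lemma as stated for $p>1$ does not list it explicitly; you were right to make it explicit.
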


\begin{proof}
Since $a$ is a norm, the existence of \(u\) with \(a(u)=1\) and dual normalization is exactly Lewis's theorem. 

For the other part, define the linear functional
\[
L(B):=\operatorname{tr}(u^{-T} B).
\]
 By the choice $a^*(u^{-1})=d$ we have
\[
\sup_{a(A)\le1} L(A)=a^*(u^{-1})=d,
\qquad\text{and}\qquad L(u)=\operatorname{tr}(u^{-T}u)=\operatorname{tr}(I)=d.
\]
Hence the linear functional $L$ attains its supremum on the unit ball $\{A:a(A)\le1\}$ at $A=u$. This implies that the matrix $\frac{1}{d}u^{-T}$ belongs to the subdifferential of $a$ at $u$:
\[
\frac{1}{d}u^{-T}\in\partial a(u).
\]
If $\Phi:=a^p$, for $p>1$ the map $a$ is Fréchet differentiable at $u$ and the subdifferential relation yields
\[
\frac{1}{d}u^{-T} \in \partial a(u) \quad\Longrightarrow\quad
\frac{p}{d} u^{-T} \in \partial \Phi(u).
\]
For $p>1$ the subdifferential of $\Phi$ is a singleton equal to the gradient, so the above becomes an equality of matrices:
\begin{equation}\label{nabla_eq}
\nabla\Phi(u) \;=\; \frac{p}{d}\,u^{-T}.
\end{equation}
The derivative of $\Phi$ at $A$ in direction $H$ is
\[
D\Phi(A)[H]
= p\int_S \|A v(x)\|_2^{p-2}\,\langle A v(x),\, H v(x)\rangle \,d\sigma(x).
\]
Writing the inner product as a trace one gets the matrix representation
of the gradient 
\[
\nabla\Phi(A) \;=\; p\int_S \|A v(x)\|_2^{p-2}\,(A v(x))\,v(x)^T\,d\sigma(x).
\]
Plugging $A=u$ into the gradient formula and equating with \eqref{nabla_eq} gives
\[
p\int_S \|u v(x)\|_2^{p-2}\,(u v(x))\,v(x)^T\,d\sigma(x) \;=\; \frac{p}{d}\,u^{-T}.
\]
Cancel the factor $p$ and transpose both sides:
\[
\int_S \|u v(x)\|_2^{p-2}\,v(x)\,(u v(x))^T\,d\sigma(x) \;=\; \frac{1}{d}\,u^{-1}.
\]
Multiply on the left by $u$ and on the right by $u^T$ to obtain the symmetric matrix identity
\[
\int_S \|u v(x)\|_2^{p-2}\,(u v(x))(u v(x))^T\,d\sigma(x) \;=\; \frac{1}{d}\,I_d,
\]
which is precisely the claimed isotropic identity.
\end{proof}

Even for the case where $p=1$, the preceding lemma remains valid. However, the requirement of an additional non-vanishing constraint, coupled with the resultant divergence in the method of proof, necessitates its independent enunciation and demonstration.

\begin{proposition}\label{Lewis p=1}
Let $(S,\sigma)$ be a finite measure space and let $v:S\to\mathbb R^d$ satisfy
\[
v\in L^1(S;\mathbb R^d),\qquad \sigma(\{x:\,v(x)=0\})=0,
\qquad \text{ess.\ span}\{v(x):x\in S\}=\mathbb R^d.
\]
Define
\[
a(A):=\int_S\|A v(x)\|_2\,d\sigma(x)\qquad(A\in L(\mathbb R^d)).
\]
Then there exists an invertible matrix $u\in GL(d)$ such that
\[
a(u)=1\qquad\text{and}\qquad a^*(u^{-1})=d.
\]
Moreover, if
\[
\zeta(x):=\frac{u v(x)}{\|u v(x)\|_2}\qquad\text{(defined a.e.)}
\]
is the canonical measurable selection, then it satisfies the matrix identity
\[
\frac{1}{d}u^{-T}=\int_S \zeta(x)\,v(x)^T\,d\sigma(x),
\]
and consequently
\[
\int_S \frac{(u v(x))\otimes (u v(x))}{\|u v(x)\|_2}\,d\sigma(x)=\frac{1}{d}I_d.
\]
\end{proposition}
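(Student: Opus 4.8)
The plan is to reproduce the subdifferential argument of Lemma~\ref{lewis_lemma}, the only genuine change being that the smoothness of $\Phi=a^p$ exploited there when $p>1$ must be replaced, when $p=1$, by a direct differentiability argument for the norm functional $a$ itself; this is precisely where the three hypotheses on $v$ enter. First I would verify that $a$ is a norm on the finite-dimensional space $L(\mathbb R^d)$: it is finite because $a(A)\le\|A\|_{\mathrm{op}}\int_S\|v(x)\|_2\,d\sigma(x)<\infty$ by $v\in L^1$; it is absolutely homogeneous and subadditive by the triangle inequality for $\|\cdot\|_2$; and it is positive definite because $a(A)=0$ forces $Av(x)=0$ for $\sigma$-a.e.\ $x$, so $A$ annihilates the essential span of $\{v(x):x\in S\}$, which is all of $\mathbb R^d$, whence $A=0$. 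Therefore Lewis's theorem applies and produces $u\in GL(d)$ with $a(u)=1$ and $a^*(u^{-1})=d$; exactly as in the proof of Lemma~\ref{lewis_lemma}, this pair of normalizations gives the subdifferential membership $\tfrac1d u^{-T}\in\partial a(u)$.

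Next I would show that $a$ is G\^ateaux (hence, in finite dimensions, Fr\'echet) differentiable at $u$. For a fixed direction $H\in L(\mathbb R^d)$ and $\sigma$-a.e.\ $x$ we have $uv(x)\neq0$, because $v(x)\neq0$ a.e.\ and $u$ is invertible; at such an $x$ the map $A\mapsto\|Av(x)\|_2$ is differentiable at $A=u$ with derivative $\langle\zeta(x),Hv(x)\rangle$, where $\zeta(x):=uv(x)/\|uv(x)\|_2$. The difference quotients $t^{-1}\big(\|(u+tH)v(x)\|_2-\|uv(x)\|_2\big)$ are bounded in absolute value by $\|Hv(x)\|_2\le\|H\|_{\mathrm{op}}\|v(x)\|_2\in L^1(\sigma)$, so dominated convergence lets me differentiate under the integral:
\[
Da(u)[H]=\int_S\langle\zeta(x),Hv(x)\rangle\,d\sigma(x)=\operatorname{tr}\!\Big(H^{T}\int_S\zeta(x)\,v(x)^{T}\,d\sigma(x)\Big).
\]
The matrix $\int_S\zeta(x)v(x)^{T}\,d\sigma(x)$ is a well-defined element of $L(\mathbb R^d)$ since $\|\zeta(x)v(x)^{T}\|\le\|v(x)\|_2$, and the displayed formula identifies it as $\nabla a(u)$.

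The proof then closes immediately. A convex function that is G\^ateaux differentiable at a point has there a singleton subdifferential equal to its gradient; combining this with $\tfrac1d u^{-T}\in\partial a(u)$ yields
\[
\int_S\zeta(x)\,v(x)^{T}\,d\sigma(x)=\frac1d\,u^{-T},
\]
which is the first asserted identity. Multiplying on the right by $u^{T}$ and using $\zeta(x)\,(uv(x))^{T}=(uv(x))(uv(x))^{T}/\|uv(x)\|_2$ gives
\[
\int_S\frac{(uv(x))\otimes(uv(x))}{\|uv(x)\|_2}\,d\sigma(x)=\frac1d I_d,
\]
the claimed isotropic identity; its symmetry is automatic and serves as a consistency check.

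I expect the differentiability step to be the main obstacle: unlike the case $p>1$, one cannot pass to the smooth function $a^p$, so one must differentiate the norm $a$ directly, and for this the non-vanishing hypothesis $\sigma(\{x:v(x)=0\})=0$ is indispensable — it ensures that the non-smooth point $0$ of $\|\cdot\|_2$ is hit only on a $\sigma$-null set — while $v\in L^1$ furnishes the dominating function needed to interchange differentiation and integration, and the essential-span condition is exactly what makes $a$ non-degenerate so that Lewis's theorem is available. A minor additional point is the measurability and a.e.\ well-definedness of the selection $\zeta$, both of which follow from measurability of $x\mapsto uv(x)$ together with $\|uv(x)\|_2>0$ a.e.
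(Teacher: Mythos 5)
Your proposal is correct and follows essentially the same route as the paper: both invoke Lewis's theorem, deduce $\tfrac1d u^{-T}\in\partial a(u)$, and differentiate $a$ at $u$ under the integral via dominated convergence using that $uv(x)\neq0$ for $\sigma$-a.e.\ $x$. The only cosmetic difference is that you package the final step as ``G\^ateaux differentiability of a convex function forces a singleton subdifferential,'' whereas the paper spells out the two one-sided inequalities for $\phi'_+(0)$ directly; you also explicitly verify that $a$ is a norm (in particular positive definiteness via the essential-span hypothesis), a point the paper leaves implicit.
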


\begin{proof}
As in the above proof, from Lewis theorem we obtain an invertible matrix $u\in GL(d)$ with $a(u)=1$ and
\[
\frac{1}{d}u^{-T}\in\partial a(u).
\]
Set \(\rho:\mathbb R^d\to\mathbb R,\ \rho(t)=\|t\|_2\). The subdifferential \(\partial\rho(t)\) is defined by
\[
\partial\rho(t)=\{s\in\mathbb R^d:\ \rho(t+h)-\rho(t)\ge\langle s,h\rangle\ \text{for all }h\in\mathbb R^d\}.
\]
We verify the two standard facts:

(i) If \(t\neq0\) then \(\rho\) is differentiable at \(t\) and
\[
\partial\rho(t)=\Big\{\frac{t}{\|t\|_2}\Big\}.
\]
Indeed, the directional derivative at \(t\) in direction \(h\) equals \(\langle t/\|t\|_2,h\rangle\), so the only possible subgradient is \(t/\|t\|_2\), and it satisfies the defining inequality.

(ii) If \(t=0\) then
\[
\partial\rho(0)=\{s\in\mathbb R^d:\ \|s\|_2\le1\}.
\]
Indeed, if \(s\in\partial\rho(0)\) then \(\|h\|_2\ge\langle s,h\rangle\) for all \(h\). Taking \(h\) on the unit sphere gives \(\langle s,u\rangle\le1\) for all \(\|u\|_2=1\), hence \(\|s\|_2\le1\). Conversely, if \(\|s\|_2\le1\) then \(\langle s,h\rangle\le\|s\|_2\,\|h\|_2\le\|h\|_2\) by Cauchy–Schwarz, so \(s\in\partial\rho(0)\).
\smallskip

\noindent Since \(u\) is invertible and \(\sigma(\{v=0\})=0\), we have \(u v(x)\neq0\) for a.e.\ \(x\). Hence for a.e.\ \(x\) the set \(\partial\rho(u v(x))\) is the singleton \(\{u v(x)/\|u v(x)\|_2\}\). Define
\[
\zeta(x):=\frac{u v(x)}{\|u v(x)\|_2}\quad\text{for a.e.\ }x.
\]
This \(\zeta\) is measurable because \(x\mapsto u v(x)\) is measurable and \(t\mapsto t/\|t\|_2\) is continuous on \(\mathbb R^d\setminus\{0\}\) and \(\|\zeta(x)\|_2=1\) a.e.

The hypothesis \(\tfrac{1}{d}u^{-T}\in\partial a(u)\) means that for every matrix \(H\in L(\mathbb R^d)\)
\begin{equation}\label{SG}
a(u+H)-a(u)\;\ge\;\Big\langle\frac{1}{d}u^{-T},\,H\Big\rangle_{\mathrm{HS}}.
\end{equation}
Fix an arbitrary matrix $H\in L(\mathbb R^d)$ and set
\[
\phi(t):=a(u+tH)=\int_S \|u v(x)+t\,H v(x)\|_2\,d\sigma(x),\qquad t\in\mathbb R.
\]
By convexity the one–sided derivative $\displaystyle\phi'_+(0)=\lim_{t\to 0^+}\frac{\phi(t)-\phi(0)}{t}$ exists (possibly $+\infty$).  
For a.e.\ $x$ the scalar function $g_x(t):=\|u v(x)+t\,H v(x)\|_2$ is convex in \(t\) and its right derivative at \(0\) equals
\[
g'_{x^{+}}(0)=\big\langle\zeta(x),\,H v(x)\big\rangle,
\]
because $\zeta(x)$ is the unique element of $\partial\|\cdot\|(u v(x))$ for a.e.\ \(x\). Moreover, since $|\langle\zeta,Hv\rangle|\le\|H\|_{\mathrm{op}}\|v\|_2\in L^1$ dominated convergence  allows passing the derivative inside the integral, hence
\begin{equation}\label{A}
\phi'_+(0)=\int_S \big\langle\zeta(x),\,H v(x)\big\rangle\,d\sigma(x). 
\end{equation}

On the other hand, the subgradient inclusion $\tfrac{1}{d}u^{-T}\in\partial a(u)$ implies that for every $t>0$
\[
\frac{\phi(t)-\phi(0)}{t} \ge \Big\langle\frac{1}{d}u^{-T},\,H\Big\rangle_{\mathrm{HS}}.
\]
Passing to the limit $t\to 0^{+}$ we obtain
\begin{equation}\label{B}
\phi'_+(0)\ge \Big\langle\frac{1}{d}u^{-T},\,H\Big\rangle_{\mathrm{HS}}.    
\end{equation}
Applying the same reasoning to $-H$ gives
\begin{equation}\label{C}
\phi'_+(0)\le \Big\langle\frac{1}{d}u^{-T},\,H\Big\rangle_{\mathrm{HS}}.
\end{equation}
Combining \eqref{B} and \eqref{C} yields equality
\begin{equation}\label{D}
\phi'_+(0) = \Big\langle\frac{1}{d}u^{-T},\,H\Big\rangle_{\mathrm{HS}}.
\end{equation}
Comparing \eqref{A} and \eqref{D} we obtain, for every $H$,
\[
\Big\langle\frac{1}{d}u^{-T},\,H\Big\rangle_{\mathrm{HS}}
= \int_S \big\langle\zeta(x),\,H v(x)\big\rangle\,d\sigma(x).
\]
Since both sides are continuous linear functionals of $H$, the above identity implies the matrix equality
\[
\frac{1}{d}u^{-T} = \int_S \zeta(x)\,v(x)^T\,d\sigma(x).
\]
If we take the transpose of the matrix identity and conjugate by \(u\):
\[
\frac{1}{d}u^{-1} = \int_S v(x)\,\zeta(x)^T\,d\sigma(x).
\]
Multiplying on the left by \(u\) and on the right by \(u^T\) gives
\[
\frac{1}{d}I_d
= u\Big(\int_S v(x)\,\zeta(x)^T\,d\sigma(x)\Big)u^T
= \int_S (u v(x))\,\zeta(x)^T\,u^T.
\]
Substituting \(\zeta(x)=\dfrac{u v(x)}{\|u v(x)\|_2}\) yields
\[
\frac{1}{d}I_d
= \int_S (u v(x))\,\frac{(u v(x))^T}{\|u v(x)\|_2}\,d\sigma(x)
= \int_S \frac{(u v(x))(u v(x))^T}{\|u v(x)\|_2}\,d\sigma(x),
\]
which is the desired isotropic identity.
This completes the proof.
\end{proof}

\bigskip\noindent\textit{Remark.}
\begin{itemize}
\item Passing the pointwise inequality to the integral is justified because \(|\langle \zeta(x),H v(x)\rangle|\le\|H\|_{\mathrm{op}}\|v(x)\|_2\) and \(v\in L^1\).
\item The uniqueness and measurability of \(\zeta(x)\) follow from \(u v(x)\neq0\) a.e.; if \(u v=0\) on a positive–measure set one must interpret the result as an assertion about some measurable selection from the unit ball there (the same proof works but requires a measurable–selection remark).  
\end{itemize}
\end{document}